\documentclass[journal ]{new-aiaa}
\usepackage[utf8]{inputenc}
\usepackage{textcomp}
\usepackage{amsfonts}
\usepackage{tikz}
\usepackage{subcaption}  
\usepackage{cleveref}
\usepackage{amsmath}
\usepackage{diagbox}   

\usepackage{lettrine}

\usepackage{amsthm}
\newtheorem{theorem}{Theorem}
\newtheorem{proposition}{Proposition}
\newtheorem{lemma}{Lemma}
\newtheorem{corollary}{Corollary}
\newtheorem{problem}{Problem}
\newtheorem{definition}{Definition}
\newtheorem{remark}{Remark}
\newtheorem{assumption}{Assumption}

\usepackage{tikz}
\usetikzlibrary{arrows.meta, shapes.geometric, calc, decorations.pathreplacing}
\definecolor{nomblue}{RGB}{31,90,166}
\definecolor{bkgreen}{RGB}{27,133,84}
\definecolor{failred}{RGB}{190,36,51}
\definecolor{orbgray}{RGB}{110,110,110}
\usepackage{tikz-3dplot}
\usetikzlibrary{3d,arrows.meta,calc}
\definecolor{cNom}{RGB}{28,63,120}     
\definecolor{cPoint}{RGB}{13,124,102}  
\definecolor{cMag}{RGB}{214,118,15}    
\definecolor{cExec}{RGB}{166,28,44}    
\definecolor{cErr}{RGB}{110,70,140}    

\crefname{lemma}{Lemma}{Lemmas}
\crefname{corollary}{Corollary}{Corollaries}
\crefname{problem}{Problem}{Problems}
\crefname{definition}{Definition}{Definitions}
\crefname{remark}{Remark}{Remarks}
\crefname{assumption}{Assumption}{Assumptions}
\crefname{section}{section}{sections}
\Crefname{section}{Section}{Sections}
\Crefname{assumption}{Assumption}{Assumptions}
\usepackage{dsfont}
\usepackage{graphicx}
\usepackage{siunitx}
\usepackage{longtable,tabularx}
\usepackage{mathtools}
\title{Robust Halo Orbit Stationkeeping Under Missed Thrust Events and Maneuver Execution Errors}

\author{Shaurya Shrivastava\footnote{Ph.D. Student, School of Aeronautics and Astronautics, Purdue University, West Lafayette, IN, 47907; shriva15@purdue.edu (Corresponding author)} and Kenshiro Oguri\footnote{Assistant Professor, School of Aeronautics and Astronautics, Purdue University, West Lafayette, IN, 47907. Senior Member AIAA}}
\affil{Purdue University, West Lafayette, IN, 47907}

\begin{document}

\maketitle

\begin{abstract}
Spacecraft stationkeeping is subject to various operational uncertainties such as missed thrust events, maneuver execution errors, and dynamics modeling errors. Missed thrust events are the most disruptive of these, and current practices mitigate that risk through Monte Carlo-driven heuristics and conservative margins that offer no formal safety guarantees and do not scale to multiple missed thrust events. This paper models these unplanned thruster outages through the Markov Jump System framework, and represents maneuver execution errors and dynamics linearization errors as bounded multiplicative noise. The resulting closed-loop system is a linear time-varying periodic Markov jump system with multiplicative noise, for which Lyapunov-like mean square stability and periodic set invariance conditions are derived. A linear matrix inequality-based optimal controller synthesis framework then designs a periodic, mode-dependent mean-square stable feedback policy that respects path constraints with probability one. The framework is demonstrated on the Earth-Moon 9:2 near-rectilinear halo orbit (NRHO), under both impulsive and low-thrust propulsion, and on a Saturn-Enceladus $L_1$ NRHO using impulsive propulsion. Nonlinear Monte Carlo simulations show that the spacecraft recovers from an arbitrary number of missed thrust events while respecting path constraints, and that the multiplicative noise model introduces additional robustness to maneuver execution error and linearization error.
\end{abstract}

\section{Introduction}

Many long-duration space missions comprise an extended orbiting phase, during which the
spacecraft is required to remain in a prescribed orbit. Such orbits are chosen to
provide favorable science observation geometry and operational benefits such as continuous
communications coverage and long-term stability. However, unmodeled perturbations, orbit
insertion errors, and other uncertainties cause the spacecraft to deviate from the baseline
orbit. The spacecraft must therefore execute trajectory correction maneuvers (TCMs) to
actively remain in the baseline orbit, which constitutes the \textit{stationkeeping} problem
\cite{gomezStationKeepingStrategiesTranslunar1998, foltaEarthMoonLibration2014,oguriEQUULEUSTrajectoryDesign2020}. Periodic
orbits near the collinear Lagrange points $L_1$ and $L_2$ have emerged as popular candidates
for several future space missions~\cite{davisORBITMAINTENANCEBURN2022,
mackenzieEnceladusOrbilanderMission2021}, but the chaotic dynamics in the vicinity of the
libration points demand a robust stationkeeping strategy. Since the orbit maintenance phase
comprises the bulk of such missions, it is critical to design a stationkeeping strategy that
is both fuel-efficient and robust to the various uncertainties common in mission operations.

Numerous stationkeeping strategies have been developed for periodic orbits. Classical
approaches include Floquet mode control, which cancels the unstable component of the
deviation using linearized dynamics~\cite{gomezStationKeepingStrategiesTranslunar1998}, and
target-point or $x$-axis crossing control, which targets desired conditions, such as a
perpendicular crossing, at a downstream reference crossing
\cite{foltaEarthMoonLibration2014,howellStationkeepingMethodLibration1993,oguriEQUULEUSTrajectoryDesign2020}. Among the libration point orbit families, the
near-rectilinear halo orbits (NRHOs), a subset of the halo orbit family
\cite{connorhowellThreedimensionalPeriodicHalo1984}, have attracted sustained attention for
their favorable stability properties, their synodic resonance, which allows eclipses to be
avoided, and their coverage of both primaries. The Earth-Moon 9:2 southern $L_2$ NRHO is the
baseline orbit for the Gateway mission~\cite{davisORBITMAINTENANCEBURN2022}, while NRHOs in
the Saturn-Enceladus system are of interest for the icy-moon science missions highlighted in
the most recent decadal survey~\cite{mackenzieEnceladusOrbilanderMission2021,
sidhoumLowthrustTrajectoryDesign2024a, booneHaloOrbitConfigurations2026}. This has motivated a
body of NRHO-specific stationkeeping work~\cite{davisORBITMAINTENANCEBURN2022,
muralidharanControlStrategyLongTerm}, alongside more recent efforts applying predictive and
optimization-based control~\cite{shimaneRevolutionSpacedOutputFeedbackModel2026},
reinforcement learning~\cite{lafargeAdaptiveClosedloopManeuver2023}, and uncertainty-aware
chance-constrained formulations~\cite{aletiChanceConstrainedOutputFeedbackControl2023,
kumagaiRobustNRHOStationkeeping2023, oguriChanceConstrainedControlSafe2024} to
stationkeeping. Contingency-aware and fail-safe schemes have also been studied, covering safe
departures, abort-aware control, and recovery following an inadvertent departure
\cite{vegaContingencyAwareStationKeepingControl2024,
marsillachFailSafeSpacecraftRendezvous2021,
zimovan-spreenRecoveryTraejctoriesInadvertent2021}. Current flight practice, however, still
relies on heuristics and extensive ground-in-the-loop simulation to achieve robustness to
operational uncertainties.

As the number of missions grows and operations move toward greater onboard autonomy, it is
crucial to design stationkeeping strategies that ensure safety under the various
uncertainties that can jeopardize mission success~\cite{starekSpacecraftAutonomyChallenges2016}.
Among the most critical of these are missed thrust events (MTEs), which are unplanned thruster outages
during which the spacecraft is unable to operate nominally and execute planned burns. MTEs
are common in deep space missions and may be caused by onboard power or
thermal constraints, or radiation-induced shutdowns. The Dawn mission, for instance, experienced
more than fifteen MTEs~\cite{grebowDawnSafingApproach2016, mccartyMissedThrustAnalysis2020},
which forced significant revisions to its original mission plan. Current mission practices
for managing MTE risk center on Monte Carlo-driven heuristics and conservative margins, which
provide no safety guarantees and cannot account for multiple outages over a trajectory.
MTE-robust control design has received significant attention in interplanetary trajectory
design~\cite{laipertAutomatedMissedThrustPropellant2015,
rubinsztejnNeuralNetworkOptimal2020, rubinsztejnDesigningTrajectoriesResilient2021,
venigallaMultiObjectiveLowThrustTrajectory2022}, multi-body trajectory design~\cite{shrivastavaMASTOMissedthrustAware2026,sinhaInitialGuessGeneration2025a,sekineRobustTrajectoryDesign2026,sekineRobustTrajectoryOptimization2026} and rendezvous
\cite{shrivastavaFaultTolerantFeedbackControl2023, nathersonReachabililtybasedMissedThrust2026,nemirovskyTrajectoryDesignControl2026},
but has received limited attention~\cite{shrivastavaMarkovJumpApproachRobust2025} in
stationkeeping control design.

MTEs represent an extreme form of a broad class of maneuver execution errors. Even without
MTEs, most executed maneuvers are imperfect and comprise a combination of pointing and
magnitude errors~\cite{gatesSimplifiedModelMidcourse,oguriRobustSpacecraftGuidance2021,oguriChanceConstrainedControlSafe2024, kumagaiRobustCislunarLowThrust}. Such errors can
accumulate to become significant, leading to large deviations from the planned trajectory.
There is therefore a need to model these errors within the control design process and to
ensure robustness against them. Furthermore, most stationkeeping strategies are designed
under simplified force model assumptions, which can differ from the ``true'' dynamics. Such
unmodeled perturbations can result in deviations from the planned nominal trajectory that can be detrimental to
long-term stationkeeping. To counter this, the control design framework must also account for
the uncertainty introduced by simplified dynamical models.

Various uncertainty models have been proposed to incorporate these errors into the control
design process. For instance, the Gates model has been used to represent maneuver execution
error, and has previously been incorporated into sequential covariance steering frameworks
\cite{kumagaiRobustCislunarLowThrust,oguriConvexBlockCholeskyApproach2026}. Dynamics error has likewise been considered in the
control design process, with previous works focusing on controllers that minimize
linearization error~\cite{qiConvexBoundNonlinear} or on modeling it as additive noise
\cite{kumagaiRobustCislunarLowThrust, oguriConvexBlockCholeskyApproach2026}. For
MTE-induced uncertainty, previous flight data has found MTE occurrence to be well modeled by
an exponential distribution~\cite{imkenModelingSpacecraftSafe2018}, which motivates our
approach of leveraging the Markov jump system (MJS) framework to model missed thrust events. The MJS framework allows us to model systems where the dynamics also depend on an independent parameter called ``mode,'' which is governed by a Markov chain. The memoryless property of the exponential distribution makes the MJS a natural modeling
choice, and has previously been applied in several astrodynamics contexts such as rendezvous~\cite{shrivastavaFaultTolerantFeedbackControl2023,fuModelReferenceTracking2017}, stationkeeping~\cite{shrivastavaMarkovJumpApproachRobust2025}, and trajectory optimization~\cite{shrivastavaMASTOMissedthrustAware2026}. Note that unlike other uncertainties, which can be simplified to consider Gaussian dynamics, MTE-induced uncertainty introduces an exponential
number of Gaussians, rendering traditional Gaussian assumption-based stochastic optimal
approaches inapplicable for MTE-robust control design.

The MJS framework has been extensively studied thanks to its practical applicability to several domains such as network control~\cite{suEventtriggeredSlidingMode2021}, fault-tolerant control~\cite{liRobustSimultaneousFault2019} and economics~\cite{dovalRecedingHorizonControl1999}. In particular, its linear subclass, the Markov jump linear system (MJLS), is the most extensively studied case, with a mature body of results on stability~\cite{jiControllabilityObservabilityDiscretetime1988,costaStabilityResultsDiscreteTime1993}, optimal control~\cite{costaConstrainedQuadraticState1999}, LMI-based controller synthesis~\cite{
elghaouiRobustStatefeedbackStabilization1996}, and robust formulations~\cite{desouzaRobustStabilityStabilization2006}. An excellent reference on the classical results can be found in~\cite{costaDiscreteTimeMarkovJump2005}. Multiplicative noise has also been studied for MJLS for $H_2$-control~\cite{maH2ControlDiscretetime2013}, variance-constrained control~\cite{barbieriOptimalControlConstrained2018} and, more recently, for covariance steering
\cite{wangCovarianceSteeringDiscreteTime2026}. 

For stationkeeping applications, we leverage the periodic system framework to synthesize controllers in a tractable manner. While periodic systems have received significant attention in the single-mode case~\cite{bittantiExtendedPeriodicLyapunov1985, bittantiPeriodicSystemsFiltering2008}, with a rich body of work on their stability~\cite{desouzaLMIApproachStabilization2000} and LMI-based controller synthesis frameworks~\cite{bohmStabilityPeriodicallyTimevarying2012,athanasopoulosStabilityStabilizationPeriodic2014}, its MJLS counterpart, called the Linear Time-Varying Periodic MJS (LTVPM), has received limited attention, largely confined to $H_2$ optimal control~\cite{maStochasticH2Optimal2012} and to structural properties such as observability and detectability~\cite{houSpectralTestsObservability2016}. Our previous work~\cite{shrivastavaRobustControllerSynthesis2024a} analyzes stability and LMI-based control design frameworks for such systems, but did not consider multiplicative noise. Reference~\cite{shrivastavaMarkovJumpApproachRobust2025} leverages this framework for MTE-robust stationkeeping maneuver planning, but it does not consider additional sources of uncertainty given the noiseless MJLS formulation. 

Therefore, in this work, we extend our previous work to a Markov jump-based control design framework with
multiplicative noise that is robust to MTEs, maneuver execution errors, and linearization errors.
The contributions of this work are threefold. First, we propose novel models for various uncertainties, such as multiplicative noise-based model for both maneuver
execution error and linearization error, and a three-mode MJLS modeling strategy for contingency TCM burns in impulsive propulsion case. Second, we develop
an LMI-based controller synthesis framework for LTVPM under
multiplicative noise that generates mean square stable policies respecting path constraints
with probability one. Third, we apply the developed controller synthesis framework to optimal stationkeeping
maneuver planning that is robust to MTEs, maneuver execution errors, initial dispersion errors, and
linearization errors. We demonstrate the robustness of the framework through NRHO
stationkeeping scenarios in the Earth-Moon and Saturn-Enceladus systems. 

The remainder of this paper is organized as follows. \Cref{sec:problem} formally defines the
stationkeeping problem as a stochastic optimal control problem with details on dynamics and maneuver execution error model. \Cref{sec:modeling} develops the MJLS modeling strategy, which enables us to model the original problem as an LTVPM optimal control problem. \Cref{sec:synthesis} presents the
stability theory for LTVPMs and an LMI-based controller synthesis framework to solve the original problem. \Cref{sec:results} discusses numerical results for two separate NRHOs in the Earth-Moon and
Saturn-Enceladus systems, and \Cref{sec:conclusion} concludes the paper.

\emph{Notation:} $\mathbb{R}$, $\mathbb{N}$, $\mathbb{Z}_{\geq 0}$ and $\mathbb{Z}_{a:b}$ denote the set of real numbers, natural numbers, non-negative integers and set of integers from $a$ to $b$, respectively. We define a linear space $\mathbb{M}^N$ with $N\in\mathbb{N}$ comprising a finite set of matrices $V\triangleq (V(1),\ldots V(N))$, where each $V(i)\in\mathbb{R}^{n\times n} $, $n\in\mathbb{N} \; \forall i\in\mathbb{Z}_{1:N}$.  $\{ V_k \}_{k\in\mathbb{Z}_{\geq 0}}$ is a sequence of such finite set of matrices, and $V_k\in\mathbb{M}^N$ refers to the $k^{\mathrm{th}}$ element in the sequence. Therefore, $V_k(i)$ refers to the $i^{\mathrm{th}}$ matrix from the $k^{\mathrm{th}}$ element of the sequence. Similarly, $\mathcal{O}_k^i(\cdot)$ refers to the $i^{\mathrm{th}}$ operator from the $k^{\mathrm{th}}$ element of the sequence $\{\mathcal{O}_k(\cdot)\}_{k\in\mathbb{Z}_{\geq 0}}$ whose each element is a finite set of operators $\mathcal{O}(\cdot)\triangleq(\mathcal{O}^1(\cdot),\ldots\mathcal{O}^N(\cdot))$. For a matrix $X$, $X^\top$ denotes its transpose. For a set of matrices $V\in\mathbb{M}^N$, $\mathrm{blkdiag}(V(i))$ denotes a block diagonal matrix comprising each matrix $V(i)$ of the finite set. If an operator is defined for a finite set of matrices $V(i)\in\mathbb{M}^N$, it represents an operator on the matrix $\mathrm{blkdiag}(V(i))$. For a vector $\mathbf{x}\in\mathbb{R}^n$, $\|\mathbf{x}\|$ indicates the $l_2$-norm of the vector. $\ast$ is used to denote symmetric elements in a hermitian matrix. $I_n$ and $0_n$ denote the identity and zero matrix of size $n\times n$ respectively. $\mathds{1}_{\{A\}}$ is the indicator function for the statement $A$: it is unity when A is true else 0. $\otimes$ denotes the Kronecker product. 

\section{Problem Statement}\label{sec:problem}

\subsection{Dynamics and Equations of Motion}

\subsubsection{Circular Restricted Three-Body Problem}

The Circular Restricted Three-body Problem (CR3BP) models the motion of a massless particle such as a spacecraft under the gravitational influence of two primaries, which are in a circular orbit around their common center of mass, called the barycenter. This is a commonly used model for preliminary mission design and assumes that the massless body does not affect the motion of the primaries. For the spacecraft in a rotating frame of reference about the barycenter of the primaries, the equations of motion are given by~\cite{spreenTrajectoryDesignTargeting2021}: 
\begin{equation}
    \label{eom}
        \begin{aligned}
    \ddot{x} &= x+ 2\dot{y}  -\frac{(1-\mu)(x+\mu)}{d_1^3} - \frac{\mu(x-1+\mu)}{d_2^3} \\
    \ddot{y} &=- 2\dot{x} +y  -\frac{(1-\mu)y}{d_1^3} - \frac{\mu y}{d_2^3} \\
    \ddot{z} &= -\frac{(1-\mu)z}{d_1^3} - \frac{\mu z}{d_2^3}
    \end{aligned}
\end{equation}
where  $d_1=\sqrt{(x + \mu)^2 + y^2 + z^2}$ and $d_2 = \sqrt{(x - 1 + \mu)^2 + y^2 + z^2}$ are the distances from the spacecraft to the two primary bodies, and $ \mu = \frac{m_2}{m_1 + m_2}$ is the mass ratio of the system. 

We define the corresponding state vector as $\mathbf{x}=[x, y, z, \dot{x}, \dot{y}, \dot{z}]^\top\in\mathbb{R}^{n_x}$ with $n_x=6$. The corresponding state space model with control input $\mathbf{u}\in\mathbb{R}^{n_u}$, $n_u=3$, is given by:
\begin{equation}\label{statespace}
    \dot{\mathbf{x}} = \mathbf{f}(\mathbf{x},\mathbf{u})=\mathbf{f}_0(\mathbf{x})+{B} \mathbf{u}(t)
\end{equation}
where $\mathbf{f}_0(\mathbf{x})$ is the corresponding nonlinear drift dynamics, and $B$ is the control input matrix, given by:
\begin{equation*}
    \renewcommand{\arraystretch}{0.9}
    \mathbf{f}_0(\mathbf{x}) = \begin{bmatrix}
    \dot{x} \\
    \dot{y} \\
    \dot{z} \\
    x+2\dot{y}- \tfrac{(1-\mu)(x+\mu)}{d_1^3} - \tfrac{\mu(x-1+\mu)}{d_2^3} \\
    -2\dot{x} + y - \tfrac{(1-\mu)y}{d_1^3} - \tfrac{\mu y}{d_2^3} \\
    -\tfrac{(1-\mu)z}{d_1^3} - \tfrac{\mu z}{d_2^3}
\end{bmatrix}, \quad B = \begin{bmatrix}
    \mathbf{0}_{3\times3}\\
    I_3
\end{bmatrix}
\end{equation*}

The control input $\mathbf{u}(t)$ can be used to model both impulsive and continuous propulsion. The control input is defined by an input sequence $\{\mathbf{u}_k\}_{k \in \mathbb{Z}_{\geq0}}$, and is given below:
\begin{equation}
\mathbf{u}(t)= \begin{cases}\sum_{k=0}^{T-1} \mathbf{u}_k \delta\left(t-t_k\right) & \text { (impulsive) } \\ \mathbf{u}_k, \quad \forall t \in\left[t_k, t_{k+1}\right) & \text { (continuous) }\end{cases}
\end{equation}
Here, $\delta(\cdot)$ represents the Dirac delta function and zero-order hold (ZOH) is assumed for continuous propulsion.

\subsubsection{Near-Rectilinear Halo Orbits}

In the CR3BP, there are five equilibrium points in the rotating frame of reference, which are called the libration points. Each of these libration points also has several corresponding families of periodic orbits. In particular, the halo orbits receive large attention thanks to their three-dimensional structure and proximity to the secondary body, which allows for good coverage of the secondary body. The near-rectilinear halo orbits (NRHOs), which are a subset of the halo orbit family, have since attracted significant attention for their stability properties and their synodic time-period resonance, which allows them to avoid eclipses.

The stability properties of periodic orbits can be studied using a stability index. In this work, we use the stability index $\nu_i=\frac{1}{2}(\lambda_i+1/\lambda_i)$ where $\lambda_i$ is an eigenvalue of the monodromy matrix~\cite{spreenTrajectoryDesignTargeting2021}. The monodromy matrix is a one-period state transition matrix for a periodic orbit, relating any initial perturbation in the state to its future state after one time period. Stable orbits have all their stability indices less than or equal to unity, i.e. $|\nu_i|\leq1\;\;\forall i$.

For the Earth-Moon system, the 9:2 NRHO from the southern $L_2$ family was chosen as the baseline orbit for the Gateway mission and some other technology demonstration missions~\cite{gardnerCAPSTONESummaryHighly2023}. The 9:2 ratio denotes its resonance with the synodic period. Similarly, the northern $L_1$ family NRHOs are being studied in great detail for their stability properties and their potential for science missions to Saturn's icy moon Enceladus~\cite{mackenzieEnceladusOrbilanderMission2021,booneHaloOrbitConfigurations2026, sidhoumLowthrustTrajectoryDesign2024a}. In this work, we consider the stationkeeping problem for both these orbits: the Earth-Moon 9:2 NRHO and a Saturn-Enceladus $L_1$ NRHO. The two reference orbits are plotted in \Cref{fig:ref_orbits}, and their parameters are given in \Cref{tab:orbit_params}.

\begin{figure}[htb!]
\centering
\begin{subfigure}[b]{0.42\textwidth}
\centering
\includegraphics[width=\textwidth]{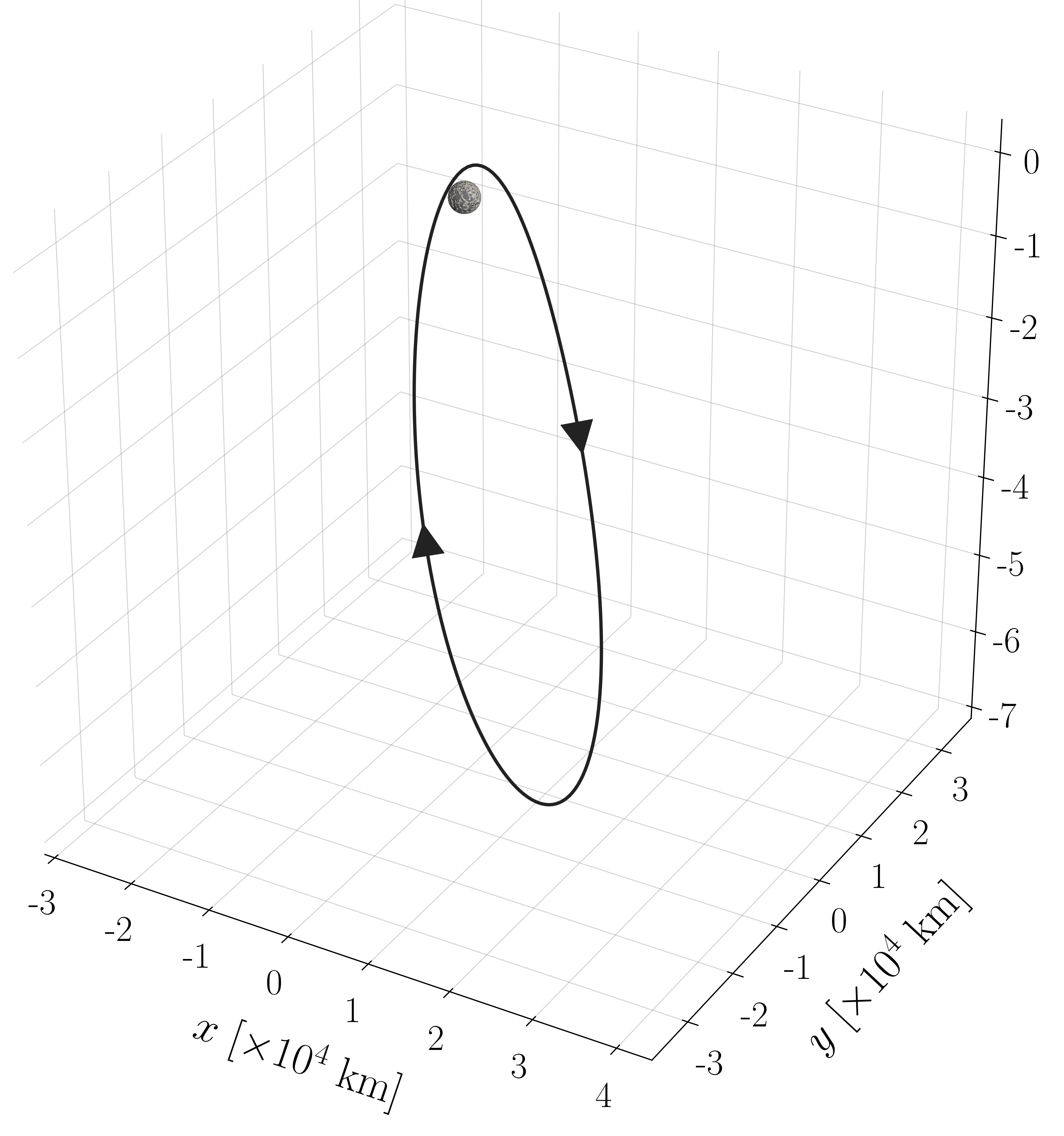}
\caption{Earth-Moon 9:2 NRHO.}
\label{fig:em_nrho}
\end{subfigure}
\hfill
\begin{subfigure}[b]{0.42\textwidth}
\centering
\includegraphics[width=\textwidth]{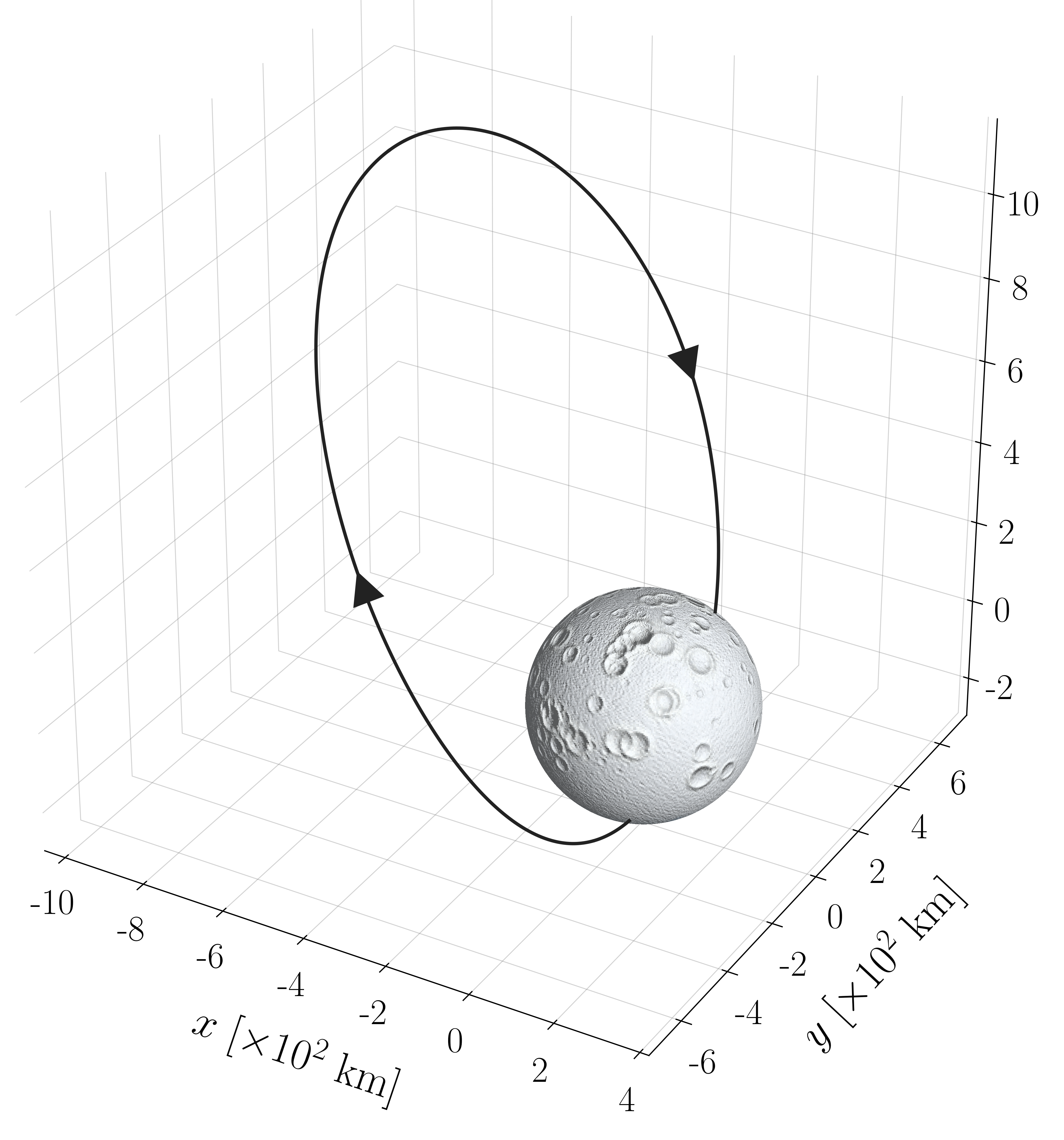}
\caption{Saturn-Enceladus $L_1$ NRHO.}
\label{fig:se_nrho}
\end{subfigure}
\caption{Baseline reference orbits, each shown in the secondary-centered rotating frame with the secondary body drawn to scale: (a) the Earth-Moon 9:2 southern $L_2$ NRHO; (b) the Saturn-Enceladus $L_1$ NRHO.}
\label{fig:ref_orbits}
\end{figure}
\subsection{Stochastic Hybrid System Framework}
To account for missed thrust events in the TCM policy design, we model the spacecraft as a stochastic hybrid system, which can switch between nominal and safe mode of operation. Imken et al.~\cite{imkenModelingSpacecraftSafe2018} study missed thrust event data from previous flight missions, and determine the exponential and Weibull distributions to be a good fit for the distribution of missed thrust events. The memoryless property of the exponential distribution motivates the use of the Markov Jump Systems (MJS) framework to model the resulting stochastic hybrid system. Further, we consider multiplicative noise in the MJS framework, which can model both maneuver execution errors and model linearization errors.

\subsubsection{Sampled-Data Markov Jump Systems}

In this work, we consider a subcategory of MJS called the sampled-data Markov jump system (SDMJS), in which the control and the mode are discrete-time variables that can only change at the finite sampling times, while the state evolves continuously in between. This matches spacecraft operations, where maneuvers are commanded at discrete TCM epochs. Moreover, for low-thrust propulsion, most direct methods assume zero-order hold for the control. 

Consider the probability space $(\Omega,\mathcal{F},\mathbb{P})$, where $\Omega$ is the sample space, $\mathcal{F}$ is the $\sigma$-algebra of Borel measurable events with natural filtration $\mathcal{F}_k\triangleq\sigma\{\mathbf{x}(t_0),r_0,\ldots,\mathbf{x}(t_k),r_k\}$, and $\mathbb{P}$ is the probability measure. The SDMJS is defined on this probability space as a pair of stochastic processes $(r_k,\mathbf{x}(t))$, where $r_k$ is a discrete-time Markov chain $r:\mathbb{Z}_{\geq0}\times\Omega\rightarrow\mathcal{N}$ and $\mathbf{x}(t)$ is a continuous-time stochastic process, evolving according to:
\begin{equation}\label{sdmjs}
    \mathcal{J} : \begin{cases}
    \dot{\mathbf{x}}(t) = \mathbf{f}_0(\mathbf{x}(t),\,r_k) + {B}(r_k)\,\mathbf{u}_k(r_k),
    & t \in [t_k,\,t_{k+1})
    \\[6pt]
   \mathbb{P}(r_{k+1} = j \mid r_k = i) = p_{ij}, \quad i,j \in \mathcal{N},
    & t = t_{k+1}
    \end{cases}
\end{equation}

If $\mathcal{N}$ is a singleton set, i.e. $r_k=1\;\forall k$, then \cref{sdmjs} is equivalent to the standard nonlinear dynamics equation with discrete control given in \cref{statespace}. Incorporating the mode dynamics generalizes the dynamics to allow for sudden switches in the dynamics function $\mathbf{f}$, or the control matrix ${B}$. This additional parameter $r_k$ is referred to as the active mode over the sampling interval $[t_k,t_{k+1})$, and it can take values from the discrete set $\mathcal{N}=\mathbb{Z}_{1:N}$, with $N$ being the total number of modes. The switching between these modes is governed by a homogeneous Markov chain with transition probability $p_{ij}=\mathbb{P}(r_{k+1}=j|r_k=i)$. We also define the transition probability matrix $\mathcal{P}$ whose $(i,j)^{\mathrm{th}}$ entry is $p_{ij}$. The mode probability distribution vector is given by $\boldsymbol{\pi}_k=[\pi_k(1)\ldots\pi_k(N)]$, and for a given initial mode distribution $\boldsymbol{\pi}_0$, it evolves as $\boldsymbol{\pi}_k=\boldsymbol{\pi}_0\mathcal{P}^k$.

\subsubsection{Maneuver Execution Error Model}

Even in nominal operations with no MTE occurrence, the executed spacecraft burns often have some error called the maneuver execution error (MEE). Conventional control noise models, like the Gates model~\cite{gatesSimplifiedModelMidcourse}, are typically incorporated in optimization-based mission design as additive noise~\cite{oguriChanceConstrainedControlSafe2024, kumagaiRobustCislunarLowThrust, ridderhofChanceconstrainedCovarianceControl2020}. However, the underlying error, even in the Gates model, scales with the magnitude of the commanded burn. Therefore, an additive representation is an oversimplification for MEE whenever the executed control has a control-dependent error. This motivates a multiplicative noise model that can be embedded in the control design itself, so that the error depends on the control magnitude directly.

Therefore, we consider a multiplicative noise model for the control input. This model has two parameters $\epsilon_p$ and $\epsilon_m$, which introduce noise in a direction perpendicular to, and along the control vector $\mathbf{u}_k$, respectively. $\epsilon_p$ can also be interpreted as a pointing error while $\epsilon_m$ indicates magnitude error. 

The pointing error is introduced in the plane perpendicular to the control vector $\mathbf{u}_k=[u_{k,x},u_{k,y},u_{k,z}]^\top$. This plane can be represented as $\mathbf{w}\times \mathbf{u}_k$ for a given vector $\mathbf{w}=[w_1, w_2, w_3]^\top$. We assume $w_i$ to be wide-sense stationary (WSS) white noise. Then, 
\begin{align}
    \mathbf{w}\times \mathbf{u}_k = (w_1T_1 + w_2T_2 + w_3T_3) \mathbf{u}_k
\end{align}
where
$$T_1=\begin{bmatrix}0 & 0 & 0\\0 & 0 & -1\\0 & 1 & 0 \end{bmatrix},\quad T_2=\begin{bmatrix}0 & 0 & 1\\0 & 0 & 0\\-1 &  0 & 0 \end{bmatrix}, \quad T_3=\begin{bmatrix}0 & -1 & 0\\1 & 0 & 0\\0 & 0 & 0 \end{bmatrix}$$
We now define the corresponding multiplicative noise matrix $\delta B_l$ as $\delta B_l=\begin{bsmallmatrix}
    \mathbf{0}_3\\\epsilon_p T_l
\end{bsmallmatrix}\; \forall l \in \{1,2,3\}$. These three terms represent errors in the plane perpendicular to the vector $\mathbf{u}_k$. The WSS white noise terms $w_i$ are incorporated as coefficients of $\delta B_l$ in the dynamics propagation.

For magnitude error, we introduce a fourth term in the multiplicative noise model for control. This is given by $\delta B_4 = \begin{bsmallmatrix}
    \mathbf{0}_3\\
    \epsilon_m I_3
\end{bsmallmatrix}$, and will augment the control vector to $\mathbf{u}_k+w_4\epsilon_m \mathbf{u}_k$, where $w$ is a WSS white noise. This term represents a magnitude error in the control vector.

Therefore, we represent the control input matrix with multiplicative noise as $B_{\Delta} = B + \sum_{l=1}^{4} w_l \delta B_l$, where $w_l$ are bounded WSS white noise terms, i.e. $\mathbb{E}[w_l]=0, \mathbb{E}[w_l^2]=1$, $\mathbb{E}[w_l w_m]=0$ $\forall l \neq m$, and $w_l^2\leq\delta_u$ almost surely. \Cref{fig:exec_error_model} shows a schematic describing the MEE model we employ in our work. \Cref{fig:exec_error_total} shows the relationship between nominal control input and executed burn, and \Cref{fig:exec_error_decomp} shows the composition of the error vector, with the pointing and magnitude error vectors shown in green and orange respectively.

\begin{figure}[htb!]
    \centering
    \tdplotsetmaincoords{68}{120}
    \begin{subfigure}[b]{0.48\textwidth}
    \centering
    \begin{tikzpicture}[tdplot_main_coords,scale=1.1,line cap=round,line join=round,
        >={Stealth[length=7pt,width=5pt]}]
      \coordinate (O) at (0,0,0);
      \coordinate (T) at (0,0,3.1);          
      \coordinate (E) at (0,1.16,4.22);      
    
      \draw[cNom,line width=1.5pt,->] (O) -- (T)
            node[pos=0.45,left=3pt] {$\mathbf{u}_k$};
      \draw[cErr,line width=1.3pt,->] (T) -- (E)
            node[pos=0.5,above left=1pt] {$\delta\mathbf{u}$};
      \draw[cExec,line width=1.5pt,->] (O) -- (E)
            node[pos=0.45,right=3pt] {$\tilde{\mathbf{u}}_k=\mathbf{u}_k+\delta\mathbf{u}$};
    \end{tikzpicture}
    \caption{Total execution error.}
    \label{fig:exec_error_total}
    \end{subfigure}
    \hfill
    \begin{subfigure}[b]{0.48\textwidth}
    \centering
    \begin{tikzpicture}[tdplot_main_coords,line cap=round,line join=round,
        >={Stealth[length=7pt,width=5pt]}]
      \begin{scope}[canvas is xy plane at z=0]
        \fill[black!3,draw=black!40,thin] (-1.7,-1.5) rectangle (3.3,2.1);
      \end{scope}
      \begin{scope}[tdplot_screen_coords]
        \node[black!55,rotate=-12.2] at (-1.28,-0.95) {$\mathbf{w}\times\mathbf{u}$ plane};
      \end{scope}
    
      \coordinate (A) at (0,0,0);
      \coordinate (P) at (0.3,2.0,0);        
      \coordinate (M) at (0,0,1.9);          
      \coordinate (S) at (0.3,2.0,1.9);      
    
      \draw[black!55,thin] (0,0,0.24) -- (0.036,0.24,0.24) -- (0.036,0.24,0);
    
      \draw[black!45,densely dashed,thin] (P) -- (S);
      \draw[black!45,densely dashed,thin] (M) -- (S);
    
      \draw[cNom,densely dashed,line width=0.9pt,->] (0,0,2.0) -- (0,0,3.05)
            node[above=1pt] {$\mathbf{u}_k$};
    
      \draw[cPoint,line width=1.4pt,->] (A) -- (P)
            node[pos=0.55,below=3pt] {$\delta\mathbf{u}_p$};
      \draw[cMag,line width=1.4pt,->] (A) -- (M);
      \node[cMag,anchor=east] at (-0.02,-0.02,1.2) {$\delta\mathbf{u}_m$};
      \draw[cErr,line width=1.4pt,->] (A) -- (S);
      \node[cErr,anchor=west,xshift=4pt] at (0.3,2.0,1.9)
            {$\delta\mathbf{u}=\delta\mathbf{u}_p+\delta\mathbf{u}_m$};
    \end{tikzpicture}
    \caption{Decomposition of the execution error.}
    \label{fig:exec_error_decomp}
    \end{subfigure}
    \caption{Maneuver execution error model. The executed control $\tilde{\mathbf{u}}_k$ is the nominal
    control $\mathbf{u}_k$ plus an error $\delta\mathbf{u}$ (a), which splits into a pointing error
    $\delta\mathbf{u}_p$ in the $\mathbf{w}\times\mathbf{u}$ plane and a magnitude error
    $\delta\mathbf{u}_m$ along $\mathbf{u}_k$ (b).}
    \label{fig:exec_error_model}
    \end{figure}
\subsection{Nonlinear Stochastic Optimal Control Problem}
Now we outline the Nonlinear Stochastic Optimal Control Problem (NL-SOCP) we aim to solve in this work.
\subsubsection{Objective Function}
We consider a LQR-type quadratic cost function on both the control and state in expectation, given by:
\begin{equation}
    J = \mathbb{E} \left[ \int_0^{\infty} \{\delta\mathbf{x}^\top Q \delta\mathbf{x} + \mathbf{u}^\top R \mathbf{u} \}dt \right]
\end{equation}
{where $\delta\mathbf{x}(t)\triangleq\mathbf{x}(t)-\mathbf{\bar{x}}(t)$ is the state deviation from the baseline orbit $\mathbf{\bar{x}}(t)$, and $Q\succeq0$ and $R\succ0$ are the cost weights for the state deviation $\delta\mathbf{x}$ and the control $\mathbf{u}$ respectively.}
\subsubsection{Path Constraints}
Since we deal with stochastic state variables, we consider chance constraints, which should be satisfied with probability one, as given below:
\begin{align}
    \mathbb{P}(\delta\mathbf{x}\in\mathcal{X})=1, \quad \mathbb{P}(\mathbf{u}\in\mathcal{U})=1
\end{align}
where $\mathcal{X}$ and $\mathcal{U}$ are defined as $\mathcal{X} \triangleq \{\delta\mathbf{x}: \|H_k\delta\mathbf{x}\| \leq 1 \},\, H_k = x_{b,k}^{-1}[I_3\,\, 0_3]$ and $\mathcal{U} \triangleq \{\mathbf{u}: \|\mathbf{u}\|\leq u_{max}\}$. The baseline orbit $\mathbf{\bar{x}}(t)$ is the Earth-Moon 9:2 NRHO and the Saturn Enceladus northern $L_1$ NRHO in this work.
\subsubsection{Initial Dispersion}
We assume that the initial state deviation $\delta\mathbf{x}_0$ is distributed in some known bounded set $\mathcal{D}_0$ with probability one, which models the initial dispersion error. We further assume that the mean of the distribution of the initial state corresponds to the apolune of the NRHO but do not make any other assumptions on the nature of the distribution of the initial state. Mathematically, we have:
\begin{align}
    \mathbb{P}(\delta\mathbf{x}_0\in\mathcal{D}_0)=1, \quad \mathbb{E}[\delta\mathbf{x}_0] = 0
\end{align}
\subsubsection{Stability Condition}
We want the spacecraft to asymptotically converge to the baseline orbit. To assure this, we enforce mean square stability of the Markov jump system, defined below:
\begin{definition}
    An MJS is said to be mean square stable (MSS) iff:
    \begin{equation}
        \lim_{t\rightarrow \infty}\mathbb{E}[\|\delta\mathbf{x}(t)\|^2] = 0
    \end{equation}
\end{definition}
\subsubsection{NL-SOCP}
Under the initial condition, objective function, path and stability constraints discussed above, we now define the nonlinear stochastic optimal control problem we aim to solve in this work:
\begin{problem}\label{NL_SOCP}
    Solve the following nonlinear stochastic optimal control problem:
\begin{subequations}\label{eq:nl_socp}
\begin{alignat}{2}
    \underset{\mathbf{u}_k(r_k)}{\min} \quad & J= \mathbb{E} \left[ \int_0^{\infty} (\delta\mathbf{x}^\top Q \delta\mathbf{x} + \mathbf{u}^\top R \mathbf{u})dt \right] \\
    \text{s.t.} \quad & \dot{\mathbf{x}}(t)= \mathbf{f}_0(\mathbf{x}(t),r_k) + B_{\Delta}(r_k)\, \mathbf{u}_k(r_k), \;\; t\in[t_k,t_{k+1}) && \quad \triangleleft \text{SDMJS dynamics with MEE} \\
    & \mathbb{P}(r_{k+1}=j \mid r_k=i) = p_{ij}, \;\; i,j\in\mathcal{N}, \;\; t=t_{k+1} && \quad \triangleleft \text{Markov mode switching} \\
    &  \delta\mathbf{x}_0\in\mathcal{D}_0, \;\; \mathbb{E}[\delta\mathbf{x}_0] = 0, \;\; r_0\sim\boldsymbol{\pi}_0 && \quad \triangleleft \text{Initial state and mode distribution} \\
    & \mathbb{P}(\delta\mathbf{x}\in\mathcal{X})=1, \mathbb{P}(\mathbf{u}\in\mathcal{U})=1 && \quad \triangleleft \text{Path constraints} \\
    & \lim_{t\rightarrow \infty}\mathbb{E}[\|\delta\mathbf{x}(t)\|^2] = 0 && \quad \triangleleft \text{Stability constraints}
    \end{alignat}
\end{subequations}
\end{problem}
This is a computationally intractable problem, for several reasons: (i) the stochastic switching dynamics and MEE-induced multiplicative noise render scenario-based direct optimization techniques unscalable. At every discrete node, the switching dynamics will lead to an exponential number of cases, while multiplicative noise cannot be trivially applied to such frameworks. (ii) the nonlinear dynamics are difficult to handle in optimization frameworks, as they lead to a non-convex problem, and (iii) for a general class of control policies, the problem is an infinite-horizon, non-convex problem which makes it intractable.

In the following section, we outline a modeling strategy to convert the original nonlinear stochastic optimal control problem (\Cref{NL_SOCP}) to a linearized stochastic optimal control problem, and then utilize LMI-based controller synthesis framework to solve a linearized optimal control problem, which will provide an approximate solution to \Cref{NL_SOCP}.

\section{Modeling Strategy}\label{sec:modeling}
To solve \Cref{NL_SOCP} in a tractable manner, we elaborate the modeling strategy we undertake to convert the original nonlinear stochastic optimal control problem to a linearized stochastic optimal control problem. We follow the approach of first linearizing and discretizing the single-mode system about a baseline orbit, and then extending that discrete-time linear system to a discrete-time Markov jump linear system (MJLS), which can also model missed thrust events, MEE and state linearization errors.
\subsection{Linearization and Discretization}
To linearize and discretize a continuous-time system of the form $\dot{\mathbf{{x}}}= \mathbf{f}(\mathbf{x},\mathbf{u})$, we use the first-order Taylor expansion of $\mathbf{f}(\mathbf{x},\mathbf{u})$ about a reference trajectory $\mathbf{\bar{x}}(t),\mathbf{\bar{u}}(t)$ to get the discrete-time linear system of the form~\cite{oguriChanceConstrainedControlSafe2024}:
\begin{equation}
     \delta\mathbf{x}_{k+1} = A_k \delta\mathbf{x}_k + B_k  \mathbf{u}_k
\end{equation}
where $\delta\mathbf{x}_k\triangleq\mathbf{x}(t_k)-\mathbf{\bar{x}}(t_k)$ is the deviation from the reference trajectory $\mathbf{\bar{x}}(t)$, and the matrices $A_k$ and $B_k$ are given by~\cite{oguriChanceConstrainedControlSafe2024}:
\begin{equation}\label{lindisc}
\begin{aligned}
& A_k=\Phi\left(t_{k+1}, t_k\right), 
& B_k= \begin{cases}A_k B\left(t_k\right) & \text { (impulsive) } \\
A_k \int_{t_k}^{t_{k+1}} \Phi^{-1}\left(t, t_k\right) B(t) \mathrm{d} t & \text { (low thrust) }\end{cases}
\end{aligned}
\end{equation}
where $\Phi$ is the State Transition Matrix (STM) for the system~\cite{oguriChanceConstrainedControlSafe2024,kumagaiRobustCislunarLowThrust}. The MEE matrices $\delta B_l$ are also discretized analogously using \cref{lindisc}, giving the discrete-time noise matrices $B_{k,l}$.
\subsection{Linearization Error Modeling}

While we use a linearized system to describe the dynamics around the baseline orbit, it ignores the higher order effects, which might not be negligible depending on the deviation from the baseline orbit. Therefore, considering the linearized dynamics about a perturbed state will better approximate the nonlinear dynamics. \Cref{fig:unmodeled_stepoff} shows an orbit with an initial perturbation $\delta\mathbf{x}$, and downstream at time $k$, the linear map $\delta\mathbf{x}_{k+1}=A_k\delta\mathbf{x}_k$ is only true for the baseline orbit $\mathbf{\bar{x}}_k$ and its vicinity. If the perturbation $\delta\mathbf{x}$ grows larger, a more accurate linear map would be about the perturbed state $\mathbf{\bar{x}}_k+ \delta\mathbf{x}_k$ which is modeled by the slope $A_k +\delta A_k$. This improvement in linear approximation of the original nonlinear dynamics is also shown in \Cref{fig:unmodeled_jacobian} where the red slope, which indicates a one-dimensional projection of the new linear map, better approximates the true nonlinear dynamical flow (shown in grey) at $\mathbf{\bar{x}}_k+\delta\mathbf{x}_k$ compared to the previous linear map $A_k$ obtained at $\mathbf{\bar{x}}_k$.
\begin{figure}[htb!]
    \centering
    \begin{subfigure}[b]{0.48\textwidth}
    \centering
    \begin{tikzpicture}[line cap=round,line join=round,
        >={Stealth[length=7pt,width=5pt]},
        node/.style={circle,fill,inner sep=1.4pt}]
      \fill[black!25] (0,1.3) circle (0.24);
      \node[black!55,anchor=north] at (0,1.04) {\scriptsize Moon};

      \draw[cNom,line width=1.2pt]
        plot[smooth cycle,tension=0.7] coordinates
        {(0,-2.5) (1.25,-1.5) (1.05,0.6) (0,1.75) (-1.05,0.6) (-1.25,-1.5)};
      \node[cNom,anchor=east] at (-1.6,-0.6) {\small $\mathbf{\bar{x}}(t)$};

      \draw[cExec,line width=1.0pt,densely dashed,->]
        plot[smooth,tension=0.7] coordinates
        {(-0.34,-2.86) (-1.55,-1.6) (-1.28,0.7) (-0.03,1.85) (1.28,0.7) (1.6,-1.55) (0.75,-2.9)};

      \coordinate (Ap) at (0,-2.5);
      \coordinate (Pp) at (-0.34,-2.86);
      \node[node,cNom] at (Ap) {};
      \node[node,cExec] at (Pp) {};
      \draw[cErr,line width=1.0pt] (Ap) -- (Pp);
      \node[cErr,right=1pt] at ($(Ap)!0.6!(Pp)+(0.02,-0.1)$) {\small $\delta\mathbf{x}_0$};

      \coordinate (Bk)  at (1.05,0.6);
      \coordinate (Bk1) at (1.25,-1.5);
      \coordinate (Pk)  at (1.28,0.7);
      \coordinate (Pk1) at (1.6,-1.55);
      \node[node,cNom]  at (Bk)  {};
      \node[node,cNom]  at (Bk1) {};
      \node[node,cExec] at (Pk)  {};
      \node[node,cExec] at (Pk1) {};
      \node[cNom,anchor=north east] at ($(Bk)+(-0.04,-0.08)$)  {\small $\mathbf{\bar{x}}_k$};
      \node[cNom,anchor=east]       at ($(Bk1)+(-0.08,-0.02)$) {\small $\mathbf{\bar{x}}_{k+1}$};
      \node[cExec,anchor=west]      at ($(Pk)+(0.08,0)$)       {\small $\mathbf{\bar{x}}_k+\delta\mathbf{x}_k$};
      \node[cExec,anchor=west]      at ($(Pk1)+(0.08,-0.02)$)  {\small $\mathbf{\bar{x}}_{k+1}+\delta\mathbf{x}_{k+1}$};

      \draw[cNom,thin,->] ($(Bk)+(-0.6,-0.7)$) to[bend right=25] ($(Bk1)+(-0.55,0.4)$);
      \node[cNom] at (0.15,-0.45) {\small $A_k$};
      \draw[cExec,thin,->] ($(Pk)+(0.85,-0.35)$) to[bend left=25] ($(Pk1)+(0.7,0.35)$);
      \node[cExec,anchor=west] at (2.55,-0.45) {\small $A_k+\delta A_k$};
    \end{tikzpicture}
    \caption{Step-off along the unstable eigenvector.}
    \label{fig:unmodeled_stepoff}
    \end{subfigure}
    \hfill
    \begin{subfigure}[b]{0.48\textwidth}
    \centering
    \begin{tikzpicture}[line cap=round,line join=round,scale=1.15,
        >={Stealth[length=7pt,width=5pt]},
        node/.style={circle,fill,inner sep=1.4pt}]
      \draw[black!60,->] (-0.2,0) -- (4.7,0) node[below] {\small $\mathbf{x}_k$};
      \draw[black!60,->] (0,-0.2) -- (0,3.9) node[left] {\small $\mathbf{F}_k(\mathbf{x}_k)$};

      \draw[black!45,line width=1.1pt,domain=0.25:3.35,samples=80]
        plot (\x,{0.55+0.05*\x+0.028*\x*\x*\x*\x});
      \node[black!55,anchor=south east] at (3.0,3.35)
            {\small $\mathbf{x}_{k+1}=\mathbf{F}_k(\mathbf{x}_k)$};

      \coordinate (B) at (1.3,0.6949);
      \draw[cNom,line width=1.2pt] (0.4,{0.6949+0.2960*(0.4-1.3)}) -- (4.3,{0.6949+0.2960*3.0});
      \node[cNom,anchor=west] at (4.35,{0.6949+0.2960*3.0}) {\small slope $A_k$};
      \node[node,cNom] at (B) {};
      \node[cNom,anchor=north] at ($(B)+(-0.15,-0.12)$) {\small $\mathbf{\bar{x}}_k$};

      \coordinate (P) at (2.7,2.1743);
      \draw[cExec,line width=1.2pt,densely dashed] (1.8,{2.1743+2.2044*(1.8-2.7)}) -- (3.75,{2.1743+2.2044*1.05});
      \node[cExec,anchor=west] at (3.85,{2.1743+2.2044*1.05}) {\small slope $A_k+\delta A_k$};
      \draw[cNom,thin,densely dotted] (P) -- ($(P)+(1.35,{1.35*0.2960})$); 
      \node[node,cExec] at (P) {};
      \node[cExec,anchor=south east] at ($(P)+(-0.08,0.1)$) {\small $\mathbf{\bar{x}}_k+\delta\mathbf{x}_k$};

      \draw[cErr,line width=0.9pt,->] ($(P)+(16.5:1.15)$) arc[start angle=16.5,end angle=65.6,radius=1.15];
      \node[cErr,anchor=west] at ($(P)+(38:1.22)$) {\small $\delta A_k$};

      \draw[black!40,thin,densely dotted] (B) -- (1.3,0);
      \draw[black!40,thin,densely dotted] (P) -- (2.7,0);
      \draw[cErr,decorate,decoration={brace,mirror,amplitude=4pt}] (1.3,-0.1) -- (2.7,-0.1)
            node[midway,below=4pt] {\small $\delta\mathbf{x}_k$};
    \end{tikzpicture}
    \caption{One-dimensional projection of Jacobian perturbation.}
    \label{fig:unmodeled_jacobian}
    \end{subfigure}
    \caption{Linearization Error Schematic: (a) shows that linearizing about the perturbed reference yields a discrete map
    $A_k+\delta A_k$, which better represents the true dynamics $\mathbf{F}_k(\mathbf{x}_k)$ as indicated in (b).}
    \label{fig:unmodeled_dynamics}
\end{figure}

In particular, deviations in the direction of the unstable manifold of periodic orbits can be detrimental. Therefore, if we consider the corresponding eigenvector at each node on the reference orbit, it is the direction that is most sensitive to linearization errors; i.e., if the spacecraft is perturbed along this direction and our model does not represent the dynamics well enough, the controller might not be able to stabilize it, and the unstable natural dynamics can cause the spacecraft to diverge from the baseline orbit. 

To account for this, we incorporate the effect of linearization error in our model through state multiplicative noise terms. The noise terms are obtained by re-linearizing about the perturbed state $\mathbf{\bar{x}}_k+\delta\mathbf{x}_k$ to form $A^p_k=A_k + \delta A_k$, where $\delta\mathbf{x}_k$ is the perturbation. This initial perturbation is computed as $\delta\mathbf{x}_k=\delta d\cdot \mathbf{v}_{us}$, where $\delta d$ is the step-off distance (chosen beforehand,) and $\mathbf{v}_{us}$ is the unstable eigenvector at that node. The multiplicative noise term $\delta A_k$ is given by the difference between the new linearization about the perturbed state and the baseline orbit linearization discrete-time map $A_k$. We taper the step-off distances $\delta d$ based on its proximity to the Moon, which allows us to balance conservativeness with robustness while designing controllers. For a given set of step-off distances $\delta d_{p}$ and $\delta d_a$ given at periapsis and apoapsis respectively, the step-off distance $\delta d_k$ at node $k$ is given by:
\begin{equation}
    \delta d_k = \delta d_p + (\delta d_a - \delta d_p) \cdot \frac{d_{2,k}-d_{2,p}}{d_{2,a}-d_{2,p}}
\end{equation} 
where $d_{2,k}$ is the distance to the moon from baseline orbit node $\mathbf{\bar{x}}_k$, and $d_{2,p}$ and $d_{2,a}$ are its values at periapsis and apoapsis respectively. As with the execution-error terms, the associated scalar noises are normalized using bounded WSS noise terms. Our model offers a structured framework to model linearization error in the control design process.

\subsection{Markov Jump Linear Systems}

For the linearized system with MEE and state linearization errors modeled as multiplicative noise, we now define the corresponding stochastic switching system using the Markov Jump Linear System (MJLS) framework. Under the same probability space defined earlier for \cref{sdmjs}, we define a corresponding discrete-time MJLS with multiplicative noise as a pair of stochastic processes $(r_k,\delta\mathbf{x}_k)$, where $r_k$ is a discrete-time Markov chain and $\delta\mathbf{x}_k$ is a discrete-time stochastic process, evolving according to the following dynamics:
\begin{equation}\label{MJLS}
    \delta\mathbf{x}_{k+1} = \left(A_k(r_k)+\sum_{l=1}^{m_1} w^{\{x\}}_{k,l}(r_k)A_{k,l}(r_k)\right)\delta\mathbf{x}_k + \left(B_k(r_k)+\sum_{l=1}^{m_2} w^{\{u\}}_{k,l}(r_k)B_{k,l}(r_k)\right)\mathbf{u}_k
\end{equation}
where $A_k(r_k)$ and $B_k(r_k)$ are the state dynamics and control input matrices, respectively, for the active mode $r_k$, and $w^{\{x\}}_{k,l}$ and $w^{\{u\}}_{k,l}$ are bounded wide-sense stationary (WSS) white noise, which obey $(w^{\{x\}}_{k,l})^2\leq\delta_x$ and $(w^{\{u\}}_{k,l})^2\leq\delta_u$ almost surely respectively.
\subsubsection{Linear Time-Varying Periodic Markov Jump Systems (LTVPM)}
Since we deal with periodic orbits in this work, we will leverage a subclass of discrete-time MJLS, where the individual dynamics of each mode are periodic. Such systems are called LTVPM and their multiplicative noise version is defined below: 
\begin{definition}\label{def:ltvpm}
    An MJLS of the form \cref{MJLS} is called an LTVPM with period $T$ iff, $\forall i \in \mathcal{N}$ and $\forall n \in \mathbb{N}$, both the nominal system matrices and the multiplicative noise data repeat with period $T$, i.e. 
    \begin{equation*}
        \begin{aligned}
            A_{k+nT}(i)&=A_{k}(i), &\qquad B_{k+nT}(i)&=B_{k}(i),\\
            A_{k+nT,l}(i)&=A_{k,l}(i) \quad \forall\, l \in \mathbb{Z}_{1:m_1}, &\qquad B_{k+nT,l}(i)&=B_{k,l}(i) \quad \forall\, l \in \mathbb{Z}_{1:m_2}
        \end{aligned}
    \end{equation*}
\end{definition}
\subsubsection{Control Law}
In this work, we will consider a periodic mode-dependent linear feedback control policy, given by:
\begin{align}\label{control_law}
    \mathbf{u}_k(i)=K_k(i)\delta\mathbf{x}_k, \quad \forall k
\end{align}
where $K_k(i)$ is $T$-periodic $\forall i$, i.e. $K_{k+nT}(i) = K_k(i)\, \forall n\in\mathbb{N},i\in\mathcal{N}$.
Under \cref{control_law}, we can represent the MJLS dynamics as follows:
\begin{align}\label{concise_mjls}
    \delta\mathbf{x}_{k+1}= \Phi_k(i)\delta\mathbf{x}_k
\end{align}
where,
\begin{equation}\label{eq:Phi_k}
    \Phi_k(i)=A_k(i)+\sum_{l=1}^{m_1} w^{\{x\}}_{k,l}(i)A_{k,l}(i)+ \left(B_k(i)+\sum_{l=1}^{m_2} w^{\{u\}}_{k,l}(i)B_{k,l}(i)\right) K_k(i)
\end{equation}
\subsection{Stationkeeping as an LTVPM Control Problem}
We now elaborate on how we utilize our LTVPM framework to model the stationkeeping problem we defined in \cref{NL_SOCP}. 
\subsubsection{Failure Modeling}
To incorporate MTEs, we model the spacecraft as a two-mode MJLS, where the nominal mode assumes full control authority, and the failure mode assumes no control authority, representing a missed thrust event. We define two parameters $TTF_{90}$ and $TTR_{95}$ as the 90th percentile failure time and 95th percentile recovery time respectively. That is, once the spacecraft is operating nominally, at least one MTE occurs within $TTF_{90}$ with $90\%$ probability, and once an MTE begins, recovery occurs within $TTR_{95}$ with $95\%$ probability.  

The exponential distribution is memoryless, i.e. the probability of failure or recovery at any time $t$ is independent of the time spent in the current mode. Therefore, we can model the time to failure and the time to recovery as exponential distributions with parameters $\lambda_F$ and $\lambda_R$, respectively. Using the CDF for the exponential distribution, we can derive the parameters as:
\begin{equation}\label{expfit}
    \begin{aligned}
        1-e^{-\lambda_F\, TTF_{90}}=0.9,\qquad 1-e^{-\lambda_R\, TTR_{95}}=0.95
    \end{aligned}
\end{equation}
For a uniformly discretized grid with interval length $\delta t$, the corresponding transition probability matrix $\mathcal{P}$ of the Markov chain is given by:
\begin{equation}\label{tpm_formula}
    \mathcal{P}=\begin{bmatrix}
        e^{-\lambda_F\delta t} & 1-e^{-\lambda_F\delta t}\\
        1-e^{-\lambda_R\delta t} & e^{-\lambda_R\delta t}
    \end{bmatrix}
\end{equation}
\subsubsection{Impulsive TCM Modeling}\label{sec:backup}

Unlike low-thrust continuous propulsion, impulsive propulsion executes a burn only at specific times in an orbit. We model those opportunities by setting the control input matrix $B_k(i)$ to its nominal value at the TCM nodes and to zero elsewhere.

However, when a failure mode occurrence coincides with a planned TCM burn, the spacecraft will have to wait until the next TCM opportunity even if it recovers to nominal operations earlier. Therefore, in such scenarios, it is desirable to incorporate additional backup TCM opportunities, to offer more opportunities for recovery TCM burns after the spacecraft recovers to nominal operations. 

To achieve this, we utilize a three-mode MJLS, with an additional backup mode. This backup mode can only activate after recovery from failure mode, and it transitions to either nominal or failure mode immediately after. This allows for additional TCM opportunities to the spacecraft in case of missed burns. The corresponding Markov chain schematic for this three-mode framework and an illustrative scenario with MTE recovery and a backup-mode burn is shown in \Cref{fig:backup_framework}. In that scenario the MTE causes the nominal TCM at perilune to be missed, and the first two backup nodes cannot be executed either; after recovery, the correction burn is executed at the third backup node. Without the backup nodes, the spacecraft would have to wait until the next nominal TCM opportunity.
  \begin{figure}[htbp!]
    \centering
    \resizebox{0.95\linewidth}{!}{%
  \begin{tikzpicture}[
      >={Latex[length=2.4mm,width=1.8mm]},
      font=\small,
      nomnode/.style ={circle, fill=nomblue, draw=nomblue!60!black, inner sep=0pt, minimum size=2.8mm},
      bknode/.style  ={diamond, fill=bkgreen, draw=bkgreen!60!black, inner sep=0pt, minimum size=3.4mm},
      xmark/.style   ={path picture={
          \draw[white, line width=0.45pt, line cap=round]
            ($(path picture bounding box.center)+(-0.05,-0.05)$) --
            ($(path picture bounding box.center)+( 0.05, 0.05)$)
            ($(path picture bounding box.center)+(-0.05, 0.05)$) --
            ($(path picture bounding box.center)+( 0.05,-0.05)$);}},
      skipnom/.style ={circle, fill=black!45, draw=black!60, inner sep=0pt, minimum size=3.4mm, xmark},
      skipbk/.style  ={diamond, fill=black!45, draw=black!60, inner sep=0pt, minimum size=4.4mm, xmark},
      mode/.style    ={circle, draw, very thick, minimum size=15.5mm, align=center, font=\small},
      plab/.style    ={font=\footnotesize, inner sep=1.5pt},
      blab/.style    ={font=\small, align=left, inner sep=1.5pt},
    ]
   
  \begin{scope}[local bounding box=panelA]
    \node[mode, draw=nomblue,  fill=nomblue!8]  (N) at (0,1.1)     {nominal\\mode};
    \node[mode, draw=failred,  fill=failred!8]  (F) at (4.4,1.1)   {failure\\mode};
    \node[mode, draw=bkgreen,  fill=bkgreen!8]  (B) at (2.2,-1.85) {backup\\mode};
   
    \draw[->, thick] (N) edge[loop above, looseness=7, in=115, out=65] node[plab, above] {$p_{11}$} (N);
    \draw[->, thick] (F) edge[loop above, looseness=7, in=115, out=65] node[plab, above] {$p_{22}$} (F);
   
    \draw[->, thick] (N) -- node[plab, above, align=center] {$p_{12}$} (F);
    \draw[->, thick] (F) edge[bend left=16] node[plab, below right=-1pt, align=center] {$p_{23}$} (B);
    \draw[->, thick] (B) edge[bend left=16] node[plab, above left=-2pt, align=center] {$p_{32}$} (F);
    \draw[->, thick] (B) -- node[plab, below left=-1pt, align=center] {$p_{31}$} (N);

  \end{scope}
   
  \begin{scope}[shift={(9.9,0.35)}, local bounding box=panelB]
    \def\a{1.6}   
    \def\b{2.3}   
   
    \draw[black, line width=1.2pt]
        ({\a*cos(-40)},{\b*sin(-40)})
        arc[start angle=-40, end angle=-245, x radius=\a, y radius=\b];
    \draw[black, line width=1.2pt, dash pattern=on 3pt off 3pt]
        ({\a*cos(-245)},{\b*sin(-245)})
        arc[start angle=-245, end angle=-400, x radius=\a, y radius=\b];
   
    \draw[failred, line width=2pt]
        ({\a*cos(-60)},{\b*sin(-60)})
        arc[start angle=-60, end angle=-145, x radius=\a, y radius=\b];
   
    \draw[black, line width=1.2pt, -{Latex[length=3mm, width=2.3mm]}]
        ({\a*cos(-196)},{\b*sin(-196)}) -- ({\a*cos(-200)},{\b*sin(-200)});
   
    \coordinate (SN) at ({\a*cos(-90)},{\b*sin(-90)});
    \draw[black!55, line width=1.2pt, dash pattern=on 2.2pt off 1.8pt,
          -{Latex[length=2.2mm,width=1.7mm]}]
        ($(SN)+(75:0.18)$) -- ($(SN)+(75:0.82)$);
    \node[skipnom] at (SN) {};
   
    \node[skipbk] at ({\a*cos(-112)},{\b*sin(-112)}) {};
    \node[skipbk] at ({\a*cos(-134)},{\b*sin(-134)}) {};
   
    \coordinate (RB) at ({\a*cos(-160)},{\b*sin(-160)});
    \draw[bkgreen, line width=1.4pt, -{Latex[length=2.2mm,width=1.7mm]}]
        ($(RB)+(8:0.18)$) -- ($(RB)+(8:0.82)$);
    \node[bknode] at (RB) {};
   
    \node[nomnode] (NN) at ({\a*cos(-245)},{\b*sin(-245)}) {};

    \begin{scope}[shift={(2.30,1.98)}]
      \draw[black, line width=1.2pt] (-0.30,0) -- (0.30,0);
      \node[blab, anchor=west] at (0.50,0) {baseline orbit};

      \draw[failred, line width=2pt] (-0.30,-0.58) -- (0.30,-0.58);
      \node[blab, anchor=west] at (0.50,-0.58) {missed thrust event};

      \node[nomnode] at (0,-1.16) {};
      \node[blab, anchor=west] at (0.50,-1.16) {nominal TCM node};

      \node[bknode] at (0,-1.74) {};
      \node[blab, anchor=west] at (0.50,-1.74) {backup TCM node};

      \node[skipnom] at (-0.24,-2.32) {};
      \node[skipbk]  at ( 0.15,-2.32) {};
      \node[blab, anchor=west] at (0.50,-2.32) {missed TCM node};

      \draw[bkgreen, line width=1.4pt, -{Latex[length=2.2mm,width=1.7mm]}]
          (-0.30,-2.90) -- (0.30,-2.90);
      \node[blab, anchor=west] at (0.50,-2.90) {executed burn};

      \draw[black!55, line width=1.2pt, dash pattern=on 2.2pt off 1.8pt,
            -{Latex[length=2.2mm,width=1.7mm]}] (-0.30,-3.48) -- (0.30,-3.48);
      \node[blab, anchor=west] at (0.50,-3.48) {missed burn};
    \end{scope}
  \end{scope}
   
  \coordinate (tagline) at ($(current bounding box.south)+(0,-0.4)$);
  \node[font=\small] at (panelA.south |- tagline) {(a)};
  \node[font=\small] at (panelB.south |- tagline) {(b)};
   
  \end{tikzpicture}%
    }
    \caption{Schematic of the backup TCM node framework: (a)~the three-mode Markov jump system, and
    (b)~an MTE and recovery sequence along the baseline orbit.}
    \label{fig:backup_framework}
\end{figure}

The transition probabilities for the three-mode Markov chain follow the same exponential failure and recovery model of \cref{expfit}. The nominal and failure self-transition probabilities over a sampling interval $\delta t$ are unchanged from the two-mode case, $p_{11}=e^{-\lambda_F\delta t}$ and $p_{22}=e^{-\lambda_R\delta t}$, so that failures and recoveries are governed by the same rates $\lambda_F,\lambda_R$ derived from $TTF_{90}$ and $TTR_{95}$. The distinguishing feature is that a recovering spacecraft does not return directly to the nominal mode; instead it transitions into the transient backup mode, i.e. $p_{23}=1-e^{-\lambda_R\delta t}$. The backup mode must then transition either back to nominal or into another MTE, with the same outgoing probabilities as the nominal mode, $p_{31}=e^{-\lambda_F\delta t}$ and $p_{32}=1-e^{-\lambda_F\delta t}$. The resulting transition probability matrix, with modes ordered as nominal, failure, and backup, is:
\begin{equation}\label{tpm_formula_backup}
    \mathcal{P}=\begin{bmatrix}
        e^{-\lambda_F\delta t} & 1-e^{-\lambda_F\delta t} & 0\\
        0 & e^{-\lambda_R\delta t} & 1-e^{-\lambda_R\delta t}\\
        e^{-\lambda_F\delta t} & 1-e^{-\lambda_F\delta t} & 0
    \end{bmatrix}
\end{equation}

\subsubsection{Linearized Stochastic Optimal Control Problem}

We now define the linearized stochastic optimal control problem below as:

\begin{problem}\label{problem:l_socp}
    Solve the following optimization problem:
\begin{subequations}\label{eq:l_socp}
\begin{alignat}{2}
    \underset{K_k(i)}{\min} \quad & J= \mathbb{E}\left[\sum_{k=0}^{\infty} \delta\mathbf{x}_k^\top Q(i) \delta\mathbf{x}_k + \mathbf{u}_k^\top(i) R(i) \mathbf{u}_k(i)\right] \label{prob1_J}\\
    \text{s.t.} \quad & \delta\mathbf{x}_{k+1} = \Phi_k(i) \delta\mathbf{x}_k && \quad \triangleleft \text{Linearized MJLS dynamics} \\
    &  \delta\mathbf{x}_0\in\mathcal{D}_0 && \quad \triangleleft \text{Initial state distribution} \\
    & \mathbb{P}(\delta\mathbf{x}\in\mathcal{X})=1, \mathbb{P}(\mathbf{u}\in\mathcal{U})=1 && \quad \triangleleft \text{Path constraints} \\
    & \lim_{k\rightarrow \infty}\mathbb{E}[\|\delta\mathbf{x}_k\|^2] = 0 && \quad \triangleleft \text{Stability constraints}
    \end{alignat}
\end{subequations}
\end{problem}
This remains an intractable problem, primarily because conventional direct optimization techniques enforce a single dynamical system model at every node, and cannot account for the stochastic mode-switching nature of LTVPM. This is further exacerbated with multiplicative noise. Therefore, to solve \cref{problem:l_socp} in a tractable manner, we develop a new stability theory for LTVPM with multiplicative noise, and propose an LMI-based controller synthesis framework below.

\section{LMI-based Controller Synthesis for LTVPM}\label{sec:synthesis}

To aid the stability analysis of LTVPM, we first define the following moment variables, which are commonly used in MJLS literature:  
\begin{align}
    \pi_k(i) \triangleq \mathbb{E} \left[\mathds{1}_{\{r_k=i\}}\right], \quad
    & q_k(i) \triangleq \mathbb{E} \left[\delta\mathbf{x}_k \mathds{1}_{r_k=i} \right],   \notag \\
    X_k(i)\triangleq\mathbb{E}[\delta\mathbf{x}_k\delta\mathbf{x}^\top_k\mathds{1}_{r_k=i}],\quad& X_k\triangleq [X_k(1)\ \ldots\ X_k(N)]\notag
\end{align}
Such partial expectation quantities lead to easier algebraic derivation, as we will see in the proofs ahead. We also introduce the operators used throughout the stability analysis, which generalize their noise-free counterparts~\cite{shrivastavaRobustControllerSynthesis2024a} to the multiplicative-noise setting. For any $V\in\mathbb{M}^N$,
\begin{equation}\label{operator_defs}
    \begin{aligned}
        \mathcal{E}^i(V) &\triangleq \sum_{j\in\mathcal{N}}p_{ij}V(j)\\
        \mathcal{T}_k^j(V) &\triangleq \sum_{i\in\mathcal{N}}p_{ij}\left\{\phi_k(i)V(i)\phi_k^\top(i)+\sum_{l=1}^{m_1}A_{k,l}(i)V(i)A_{k,l}^\top(i)+\sum_{l=1}^{m_2}B_{k,l}(i)K_k(i)V(i)K_k^\top(i)B_{k,l}^\top(i)\right\}\\
        \mathcal{L}_k^i(V) &\triangleq \phi_k^\top(i)\mathcal{E}^i(V)\phi_k(i)+\sum_{l=1}^{m_1}A_{k,l}^\top(i)\mathcal{E}^i(V)A_{k,l}(i)+\sum_{l=1}^{m_2}K_k^\top(i)B_{k,l}^\top(i)\mathcal{E}^i(V)B_{k,l}(i)K_k(i)
    \end{aligned}
\end{equation}
where $\phi_k(i)=A_k(i)+B_k(i)K_k(i)=\mathbb{E}[\Phi_k(i)]$ is the noise-free part of the closed-loop dynamics. We collect these mode-dependent operators into the finite sets of operators $\mathcal{T}_k(V)\triangleq\left(\mathcal{T}_k^1(V),\ldots,\mathcal{T}_k^N(V)\right)$ and $\mathcal{L}_k(V)\triangleq\left(\mathcal{L}_k^1(V),\ldots,\mathcal{L}_k^N(V)\right)$, and define their one-period compositions as: 
\begin{equation}\label{one_period_ops}
    \mathcal{G}_T \triangleq \mathcal{T}_{T-1}\circ\mathcal{T}_{T-2}\circ\cdots\circ\mathcal{T}_0, \qquad
    \mathcal{H}_T \triangleq \mathcal{L}_0\circ\mathcal{L}_1\circ\cdots\circ\mathcal{L}_{T-1}
\end{equation}

\begin{proposition}\label{prop:adjoint}
    \cite[Proposition 3.2]{costaDiscreteTimeMarkovJump2005} The operators defined in \cref{operator_defs} satisfy $\mathcal{T}_k^\star(V)=\mathcal{L}_k(V)$ for all $V\in\mathbb{M}^N$.
\end{proposition}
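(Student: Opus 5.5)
The plan is to verify the adjoint identity directly with respect to the natural inner product on $\mathbb{M}^N$,
\[
\langle U,V\rangle \triangleq \sum_{i\in\mathcal{N}}\mathrm{tr}\bigl(U^\top(i)V(i)\bigr),
\]
which is the inner product under which \cite{costaDiscreteTimeMarkovJump2005} states its Proposition 3.2. Since $\mathcal{T}_k$ and $\mathcal{L}_k$ are linear operators on the finite-dimensional space $\mathbb{M}^N$, the adjoint is unique. It therefore suffices to show that
\[
\langle \mathcal{T}_k(U),V\rangle=\langle U,\mathcal{L}_k(V)\rangle \quad \text{for all } U,V\in\mathbb{M}^N .
\]

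First, expand the left-hand side using the definition of $\mathcal{T}_k^j$ in \cref{operator_defs}:
\[
\langle \mathcal{T}_k(U),V\rangle=\sum_{j}\sum_{i}p_{ij}\,\mathrm{tr}\Bigl(\bigl[\phi_k(i)U(i)\phi_k^\top(i)+\textstyle\sum_{l}A_{k,l}(i)U(i)A_{k,l}^\top(i)+\sum_{l}B_{k,l}(i)K_k(i)U(i)K_k^\top(i)B_{k,l}^\top(i)\bigr]^\top V(j)\Bigr).
\]
The bracketed matrices are congruences of $U(i)$, so their transposes are the same congruences of $U^\top(i)$. Each term has the form $\mathrm{tr}\bigl(M U^\top(i) M^\top V(j)\bigr)$ with $M$ equal to $\phi_k(i)$, $A_{k,l}(i)$ or $B_{k,l}(i)K_k(i)$.

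Second, apply cyclicity of the trace to rewrite each such term as $\mathrm{tr}\bigl(U^\top(i)\,M^\top V(j) M\bigr)$. The sums are finite, so the order of summation can be exchanged, and the $j$-sum can be moved inside the trace. Using linearity and $\sum_j p_{ij}V(j)=\mathcal{E}^i(V)$, we obtain
\[
\langle \mathcal{T}_k(U),V\rangle=\sum_i \mathrm{tr}\Bigl(U^\top(i)\bigl[\phi_k^\top(i)\mathcal{E}^i(V)\phi_k(i)+\textstyle\sum_l A_{k,l}^\top(i)\mathcal{E}^i(V)A_{k,l}(i)+\sum_l K_k^\top(i)B_{k,l}^\top(i)\mathcal{E}^i(V)B_{k,l}(i)K_k(i)\bigr]\Bigr).
\]
The bracket is exactly $\mathcal{L}_k^i(V)$, so the right-hand side equals $\langle U,\mathcal{L}_k(V)\rangle$, which is the claim.

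There is no real obstacle in this argument. The only points needing care are the following.
\begin{itemize}
\item Fix the inner product and index conventions. In $\mathcal{T}_k^j$ the sum runs over the source mode $i$ with weight $p_{ij}$, whereas in $\mathcal{E}^i$ it runs over the target mode $j$. The exchange of summations is what turns one convention into the other.
\item Note that the multiplicative-noise terms add no new difficulty. Each is a separate congruence and is handled term by term exactly like the $\phi_k(i)$ term, which is why the cited noise-free result extends verbatim.
\end{itemize}
As an optional sanity check, the identity also follows from $\mathbb{E}[\mathrm{tr}(\delta\mathbf{x}_{k+1}\delta\mathbf{x}_{k+1}^\top V(r_{k+1}))]$ computed in two ways, using independence and the zero-mean, unit-variance, uncorrelated structure of the $w$'s. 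The algebraic route above is cleaner, and it is the one I would present.
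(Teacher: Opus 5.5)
Your proof is correct and matches the paper's approach. The paper cites Costa et al.'s Proposition 3.2, whose proof is the same trace-inner-product and cyclicity computation, and notes that each multiplicative-noise term is a congruence of the same form; you write out exactly that argument term by term, including the exchange of the $p_{ij}$ summation between the source index in $\mathcal{T}_k^j$ and the target index in $\mathcal{E}^i$, which the paper leaves implicit.
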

\begin{proof}
    The extension of proof in~\cite{costaDiscreteTimeMarkovJump2005} to the multiplicative-noise operators of \cref{operator_defs} is immediate, since each noise term is of the same form.
\end{proof}
To analyze the mean-square stability, we first consider the first two moments' propagation:
\begin{theorem}\label{thm:moment_propagation}
    The first- and second-moment propagation equations for \cref{concise_mjls} are given by:
    \begin{equation}\label{moment_propagation}
        \begin{aligned}
            q_{k+1}(j)&=\sum_{i\in\mathcal{N}}p_{ij}\{\phi_k(i)q_k(i)\}\\
            X_{k+1}(j)&= \mathcal{T}_k^j(X_{k})
        \end{aligned}
    \end{equation}
    where $\phi_k(i)=A_k(i)+B_k(i)K_k(i)=\mathbb{E}[\Phi_k(i)]$ denotes the noise-free part of the closed-loop dynamics.
\end{theorem}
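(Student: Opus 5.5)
The plan is to compute both moments directly from their definitions by conditioning on the current mode. The key identity is $\mathds{1}_{\{r_{k+1}=j\}} = \sum_{i\in\mathcal{N}}\mathds{1}_{\{r_{k+1}=j\}}\mathds{1}_{\{r_k=i\}}$. Substituting $\delta\mathbf{x}_{k+1}=\Phi_k(r_k)\delta\mathbf{x}_k$ from \cref{concise_mjls} then gives
\begin{equation*}
q_{k+1}(j)=\sum_{i\in\mathcal{N}}\mathbb{E}\left[\Phi_k(i)\delta\mathbf{x}_k\mathds{1}_{\{r_k=i\}}\mathds{1}_{\{r_{k+1}=j\}}\right],
\end{equation*}
and analogously $X_{k+1}(j)=\sum_{i}\mathbb{E}[\Phi_k(i)\delta\mathbf{x}_k\delta\mathbf{x}_k^\top\Phi_k^\top(i)\mathds{1}_{\{r_k=i\}}\mathds{1}_{\{r_{k+1}=j\}}]$.

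Next I will state the independence structure the model implicitly assumes. The noises $w^{\{x\}}_{k,l}(i)$ and $w^{\{u\}}_{k,l}(i)$ at step $k$ are independent of $\mathcal{F}_k$ and of $r_{k+1}$. The Markov property gives $\mathbb{E}[\mathds{1}_{\{r_{k+1}=j\}}\mid\mathcal{F}_k]=p_{r_k j}$. Under these assumptions I condition first on $\mathcal{F}_k$ together with the step-$k$ noises. This pulls out the transition factor $p_{ij}$, since $r_{k+1}$ is conditionally independent of the current noises given $r_k$. I then take the expectation over the noises, which are independent of $\delta\mathbf{x}_k\mathds{1}_{\{r_k=i\}}$. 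For the first moment, linearity and $\mathbb{E}[w]=0$ give $\mathbb{E}[\Phi_k(i)]=\phi_k(i)$, which yields $q_{k+1}(j)=\sum_i p_{ij}\phi_k(i)q_k(i)$.

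For the second moment I expand
\begin{equation*}
\Phi_k(i)=\phi_k(i)+\sum_{l=1}^{m_1} w^{\{x\}}_{k,l}(i)A_{k,l}(i)+\sum_{l=1}^{m_2} w^{\{u\}}_{k,l}(i)B_{k,l}(i)K_k(i)
\end{equation*}
and form the product $\Phi_k(i)\,\delta\mathbf{x}_k\delta\mathbf{x}_k^\top\,\Phi_k^\top(i)$. Taking expectations over the noise, the cross terms between $\phi_k(i)$ and any noise term vanish because the noises have zero mean. Cross terms between distinct noise channels vanish because the noises are mutually uncorrelated, $\mathbb{E}[w_lw_m]=0$ for $l\neq m$. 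This assumption must hold across the state and control families as well, and I will state that explicitly. The diagonal terms survive with unit variance. The result is
\begin{equation*}
\phi_k(i)X_k(i)\phi_k^\top(i)+\sum_{l=1}^{m_1}A_{k,l}(i)X_k(i)A_{k,l}^\top(i)+\sum_{l=1}^{m_2}B_{k,l}(i)K_k(i)X_k(i)K_k^\top(i)B_{k,l}^\top(i).
\end{equation*}
Multiplying by $p_{ij}$ and summing over $i$ gives exactly $\mathcal{T}_k^j(X_k)$ as defined in \cref{operator_defs}.

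The main obstacle is justifying the factorization $\mathbb{E}[w\,w'\,\delta\mathbf{x}_k\delta\mathbf{x}_k^\top\mathds{1}_{\{r_k=i\}}\mathds{1}_{\{r_{k+1}=j\}}]=p_{ij}\,\mathbb{E}[ww']\,X_k(i)$, which requires the noises to be independent of past states, past modes and the next mode. The paper only says ``WSS white noise,'' so I will make these independence assumptions explicit at the outset. Given them, the argument uses only the tower property, and boundedness of the noises guarantees all moments are finite. The rest of the proof is routine bookkeeping.
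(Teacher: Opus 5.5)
Your proposal is correct and takes essentially the same route as the paper. You split on the current mode, pull out $p_{ij}$ via the Markov property, and then average over the zero-mean, mutually uncorrelated, unit-variance multiplicative noises to obtain $\phi_k(i)q_k(i)$ and $\mathcal{T}_k^j(X_k)$. The independence assumptions you state explicitly are exactly what the paper uses implicitly when it says the noises are independent of $\delta\mathbf{x}_k$ and of the Markov chain: noises independent of $\mathcal{F}_k$ and of $r_{k+1}$, and uncorrelated across the state and control noise families.
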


\begin{proof}
    We first derive the mean propagation. By definition,
    \begin{equation}
        q_{k+1}(j)\triangleq\mathbb{E}[\delta\mathbf{x}_{k+1}\mathds{1}_{r_{k+1}=j}]=\sum_{i\in\mathcal{N}}p_{ij}\mathbb{E}[\Phi_k(i)\delta\mathbf{x}_k\mathds{1}_{r_k=i}]=\sum_{i\in\mathcal{N}}p_{ij}\{\phi_k(i)q_k(i)\}
    \end{equation}
    where the last step follows because the multiplicative noises are zero-mean and independent of both $\delta\mathbf{x}_k$ and the Markov chain, so that $\mathbb{E}[\Phi_k(i)]=\phi_k(i)$.

    Similarly, the second moment propagation is given by:
    \begin{equation}
        X_{k+1}(j)\triangleq\mathbb{E}[\delta\mathbf{x}_{k+1}\delta\mathbf{x}^\top_{k+1}\mathds{1}_{r_{k+1}=j}]=\sum_{i\in\mathcal{N}}p_{ij}\mathbb{E}[\Phi_k(i)\delta\mathbf{x}_k\delta\mathbf{x}_k^\top\Phi_k^\top(i)\mathds{1}_{r_k=i}]= \mathcal{T}_k^j(X_{k})
    \end{equation}
    where the last step takes the expectation over the WSS white noise.
\end{proof}
\begin{assumption}\label{as:zero_mean}
    In this work, we deal with zero-mean MJLS, i.e. $\mathbb{E}[\delta\mathbf{x}_k\mathds{1}_{r_k=i}]=0$. In stationkeeping applications, we design control policies for the spacecraft to return to the baseline orbit, which is also the reference for linearization and is used as the mean trajectory. This allows us to model all perturbations through the second moment, motivating the use of zero-mean MJLS.
\end{assumption}
\begin{theorem}
    An LTVPM of the form \cref{concise_mjls} is mean square stable (MSS) iff its corresponding operator $\mathcal{G}_T$ is contractive.
\end{theorem}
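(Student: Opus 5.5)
We reduce mean square stability to the behaviour of the deterministic linear recursion for the second-moment sequence $\{X_k\}$ given by \Cref{thm:moment_propagation}, and then use periodicity to collapse the time-varying recursion onto the single one-period operator $\mathcal{G}_T$. Interpret ``contractive'' as $\rho(\mathcal{G}_T)<1$, the spectral radius of the linear operator on $\mathbb{M}^N$; equivalently, $\|\mathcal{G}_T^n\|\to 0$ in some induced norm. First observe that
\[
\mathbb{E}[\|\delta\mathbf{x}_k\|^2]=\sum_{i\in\mathcal{N}}\mathrm{tr}\,X_k(i).
\]
Since each $X_k(i)\succeq 0$, MSS is equivalent to $X_k\to 0$ in $\mathbb{M}^N$. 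Here one uses that the trace of a positive semidefinite matrix dominates its norm.

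\textbf{Step 1 (lifting).} By \Cref{def:ltvpm} and the $T$-periodicity of $K_k(i)$, every quantity entering $\mathcal{T}_k$ (namely $\phi_k(i)$, $A_{k,l}(i)$, $B_{k,l}(i)$, $K_k(i)$) is $T$-periodic, and $p_{ij}$ is homogeneous. Hence $\mathcal{T}_{k+nT}=\mathcal{T}_k$. Iterating $X_{k+1}=\mathcal{T}_k(X_k)$ from \Cref{thm:moment_propagation} gives
\[
X_{nT+s}=\mathcal{T}_{s-1}\circ\cdots\circ\mathcal{T}_0\big(\mathcal{G}_T^{\,n}(X_0)\big), \qquad s\in\mathbb{Z}_{0:T-1}.
\]
Everything is linear because the noise terms were already averaged out in $\mathcal{T}_k$. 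The key points are independence of the white noises from $\delta\mathbf{x}_k$ and $r_k$, together with $\mathbb{E}[w_l w_m]=\delta_{lm}$.

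\textbf{Step 2 (sufficiency).} If $\rho(\mathcal{G}_T)<1$, then $\|\mathcal{G}_T^n\|\le c\,\gamma^n$ for some $\gamma<1$. The finitely many partial compositions $\mathcal{T}_{s-1}\circ\cdots\circ\mathcal{T}_0$ are bounded operators, so $\|X_{nT+s}\|\le c'\gamma^n\|X_0\|\to 0$. This yields MSS, and in fact exponential MSS, for every initial $\delta\mathbf{x}_0$ and every $\boldsymbol{\pi}_0$.

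\textbf{Step 3 (necessity) — the main obstacle.} MSS only guarantees $\mathcal{G}_T^n(X_0)\to 0$ for those $X_0$ that actually arise as second moments, i.e. tuples of positive semidefinite matrices $X_0(i)=\mathbb{E}[\delta\mathbf{x}_0\delta\mathbf{x}_0^\top\mathds{1}_{r_0=i}]$. We need convergence on all of $\mathbb{M}^N$, so the plan is as follows.

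\emph{Spanning by realizable moments.} Choose $r_0=i$ deterministically and $\delta\mathbf{x}_0=v$. This realizes any $X_0$ with a single nonzero block $vv^\top$. Taking random $\delta\mathbf{x}_0$ realizes arbitrary positive semidefinite blocks. Every real symmetric tuple is a difference of positive semidefinite tuples, and non-symmetric $V$ is handled through its symmetric and skew parts, or by restricting to the invariant subspace of symmetric tuples, which contains the moment dynamics. Hence the realizable moments span the relevant space.

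\emph{From pointwise to spectral.} By linearity, $\mathcal{G}_T^n(V)\to 0$ for every $V$ in this space. For a finite-dimensional linear map, pointwise convergence of $\mathcal{G}_T^n$ to zero is equivalent to $\rho(\mathcal{G}_T)<1$.

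\emph{Possible subtlety.} There is a question of whether the zero-mean restriction of \Cref{as:zero_mean} limits which moments are realizable. I would address this by taking $\delta\mathbf{x}_0=\pm v$ with equal probability, which keeps the mean zero while producing the same $vv^\top$ block. Alternatively, by \Cref{prop:adjoint} one can phrase the argument through the positive operator $\mathcal{H}_T=\mathcal{G}_T^\star$, whose spectral radius equals that of $\mathcal{G}_T$, and invoke a Krein--Rutman type fact: a positive operator on the cone of positive semidefinite tuples attains its spectral radius at a positive semidefinite eigenvector. If $\rho\ge 1$, that eigenvector is a realizable moment whose trajectory does not decay, which contradicts MSS.
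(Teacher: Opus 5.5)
Your proof is correct and is essentially the paper's route. The paper only observes that the second-moment recursion of \Cref{thm:moment_propagation} remains linear and $T$-periodic under multiplicative noise, and then invokes Lemma~1 of \cite{houSpectralTestsObservability2016}, whose content is your lift-to-$\mathcal{G}_T$ argument for sufficiency plus your spanning-by-realizable-moments argument for necessity (the latter needs MSS for every initial state and initial mode, as is standard).

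The one loose end is the skew part of $\mathbb{M}^N$, which realizable moments do not span. It closes in one line: $\mathcal{G}_T^n$ is a nonnegative combination of congruences $V(i)\mapsto F V(i) F^\top$, so $\|Fa\|\,\|Fb\|\le\tfrac12(\|Fa\|^2+\|Fb\|^2)$ gives, for a single block $ab^\top$, the bound $\|\mathcal{G}_T^n(ab^\top)\|\le\tfrac12\,\mathrm{tr}\,\mathcal{G}_T^n(aa^\top+bb^\top)\to 0$.
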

\begin{proof}
    See Lemma 1 in Ref.~\citenum{houSpectralTestsObservability2016}, as the second moment evolves linearly (\cref{moment_propagation}) even under multiplicative noise.
\end{proof}
Next, we discuss a Lyapunov-like stability condition for LTVPM with multiplicative noise that guarantees MSS. 
\begin{theorem}
    An LTVPM of the form \cref{concise_mjls} is mean square stable (MSS) iff $\exists$ a $T$-periodic finite set of positive definite matrices $P_k\in\mathbb{M}^N$ such that $\forall i \in \mathcal{N}$, we have:
    \begin{equation}\label{mss_eq1}
        \begin{aligned}
            P_k - \mathcal{L}_k(P_{k+1}) \succ 0, \quad \forall k \in \mathbb{Z}_{0:T-1}
        \end{aligned}
    \end{equation}
\end{theorem}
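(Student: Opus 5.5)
The plan is to reduce the statement to the preceding theorem, which says MSS holds iff the one-period operator $\mathcal{G}_T$ is contractive, i.e. $\rho(\mathcal{G}_T)<1$. We then use positivity and duality of the operators. Two facts carry the argument.

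\emph{First fact (positivity).} Each $\mathcal{T}_k$ and $\mathcal{L}_k$ maps positive semidefinite sets of matrices in $\mathbb{M}^N$ to positive semidefinite sets, and positive definite sets to positive definite sets on the relevant components. This is because each term is a congruence $M V M^\top$ or $M^\top \mathcal{E}^i(V) M$ with nonnegative weights $p_{ij}$.

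\emph{Second fact (duality).} By \Cref{prop:adjoint}, $\mathcal{T}_k^\star=\mathcal{L}_k$. Reversing the order of composition then gives $\mathcal{G}_T^\star=\mathcal{H}_T$, so $\rho(\mathcal{G}_T)=\rho(\mathcal{H}_T)$.

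\textbf{Sufficiency.} Suppose a $T$-periodic $P_k\succ0$ satisfies \cref{mss_eq1}. Define the Lyapunov-like function $v_k=\sum_i \mathrm{tr}(P_k(i)X_k(i))=\langle P_k,X_k\rangle$. By \Cref{thm:moment_propagation} and the adjoint property,
\begin{equation*}
v_{k+1}=\langle P_{k+1},\mathcal{T}_k(X_k)\rangle=\langle \mathcal{L}_k(P_{k+1}),X_k\rangle .
\end{equation*}
Periodicity means only finitely many strict inequalities are involved, so there is $\epsilon>0$ with $\mathcal{L}_k(P_{k+1})\preceq(1-\epsilon)P_k$ for all $k$. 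Hence $v_{k+1}\le(1-\epsilon)v_k$, and so $v_k\to0$ geometrically. Since $P_k(i)\succeq\alpha I$ uniformly, this gives $\mathbb{E}\|\delta\mathbf{x}_k\|^2=\sum_i\mathrm{tr}X_k(i)\le v_k/\alpha\to0$.

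\textbf{Necessity.} Assume MSS, so $\rho(\mathcal{H}_T)=\rho(\mathcal{G}_T)<1$. Construct the periodic solution by a backward series. Fix any $S_k\succ0$, $T$-periodic, and set
\begin{equation*}
P_k=S_k+\mathcal{L}_k(S_{k+1})+\mathcal{L}_k\circ\mathcal{L}_{k+1}(S_{k+2})+\cdots ,
\end{equation*}
with indices taken mod $T$. Grouping the terms in blocks of $T$ shows the series is bounded by a geometric series in $\|\mathcal{H}_T^{(k)}\|$. Here $\mathcal{H}_T^{(k)}$ is the cyclic shift of $\mathcal{H}_T$, which has the same spectral radius because $\rho(AB)=\rho(BA)$. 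The series therefore converges. Periodicity of $P_k$ follows from periodicity of the data, and $P_k\succeq S_k\succ0$ by the positivity fact. By construction $P_k=S_k+\mathcal{L}_k(P_{k+1})$, which gives $P_k-\mathcal{L}_k(P_{k+1})=S_k\succ0$, i.e. \cref{mss_eq1}.

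\textbf{Main obstacle.} I expect the hardest step to be the equivalence between MSS and $\rho(\mathcal{H}_T^{(k)})<1$ for every shift $k$. This needs MSS from arbitrary initial times and modes, which is guaranteed by the cited spectral characterization together with the cyclic-shift spectral equality. It also requires checking that the one-period contraction controls the intermediate steps, which holds because $T$ is finite.
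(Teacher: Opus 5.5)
Your proof is correct. It differs from the paper's mainly in the sufficiency direction.

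\textbf{Sufficiency.} The paper chains the inequalities over one period to obtain $P_0\succ\mathcal{H}_T(P_0)$ with $P_0\succ0$. It concludes $\rho(\mathcal{H}_T)<1$, which implicitly invokes the positive-operator fact that a strict Lyapunov-type element forces the spectral radius below one. It then passes through the adjoint to $\mathcal{G}_T$ and the preceding spectral characterization of MSS. You instead work directly with $v_k=\langle P_k,X_k\rangle$. Using the second-moment recursion and the adjoint identity, you get $v_{k+1}=\langle\mathcal{L}_k(P_{k+1}),X_k\rangle\le(1-\epsilon)v_k$ and read off MSS with an explicit geometric rate. Your uniform $\epsilon$ and $\alpha$ are correctly justified by periodicity. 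This route is more elementary: it needs neither the preceding theorem nor any spectral theory of positive operators.

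\textbf{Necessity.} Your backward series is the same object as the paper's. With $S_k=I$ it is exactly $P_k(i)=\sum_t\mathbb{E}[\Phi(k+t,k)^\top\Phi(k+t,k)\mid r_k=i]$. The paper builds this probabilistically through the truncated sums $\Gamma_{k,s}$ and the recursion $\Gamma_{k,s+1}=\mathcal{L}_k(\Gamma_{k+1,s})+I$. Your operator-theoretic version gains two things. It allows an arbitrary periodic $S_k\succ0$. It also makes explicit that $\rho(\mathcal{H}_T^{(k)})=\rho(\mathcal{H}_T)$, via $\rho(AB)=\rho(BA)$ and periodicity; the paper uses this step without comment. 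The paper's version, in exchange, gives $P_k$ a Gramian-like interpretation as an expected accumulated cost.

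Two small points need tightening:
\begin{enumerate}
\item The convergence bound should go through Gelfand's formula, $\|(\mathcal{H}_T^{(k)})^n\|\le C\gamma^n$ for some $\gamma<1$. A spectral radius below one need not make $\|\mathcal{H}_T^{(k)}\|<1$ in a given norm.
\item $\mathcal{L}_k$ need not map positive definite sets to positive definite ones. This fails, for instance, when $\phi_k(i)$, the $A_{k,l}(i)$ and the $B_{k,l}(i)K_k(i)$ share a nonzero null vector. You only ever use preservation of positive semidefiniteness, so nothing in your argument breaks.
\end{enumerate}
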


\begin{proof}
    We first show that \cref{mss_eq1} implies MSS.
    Iteratively applying \cref{mss_eq1} over one period, we get:
    \begin{align}
        P_0\succ \mathcal{L}_0(P_1)\succ \mathcal{L}_0(\mathcal{L}_1(P_2))\succ \cdots \succ [\mathcal{L}_0 \circ \mathcal{L}_1 \circ \cdots \circ \mathcal{L}_{T-1}](P_0) = \mathcal{H}_T (P_0) \succ 0
    \end{align}
    Therefore, this implies that the linear operator $\mathcal{H}_T$ is contractive, i.e. its spectral radius is less than 1. Now, from \cref{prop:adjoint}, we know that $\mathcal{T}_k^{\star} = \mathcal{L}_k$, and since the adjoint of a contractive operator is itself contractive, $\mathcal{G}_T$ as defined in \cref{one_period_ops} is also contractive, which implies MSS.

    For the \textit{only if} part, we first assume that the system is MSS. We then define the closed-loop state transition matrix for a given mode history $\{r_{k_0}\ldots r_{k-1}\}$ and a given noise realization as:
\begin{equation}\label{Phi_transition}
    \Phi(k,k_0) = \Phi_{k-1}(r_{k-1})\ldots\Phi_{k_0}(r_{k_0})
\end{equation}
By definition, $\Phi(k_0,k_0)\triangleq I$. For the same mode  and noise realization, the following relation stands: $\Phi(k_2,k_0)=\Phi(k_2,k_1)\Phi(k_1,k_0)\;\;\:\forall k_2\geq k_1 \geq k_0$.

Now, for $s\in\mathbb{Z}_{\geq0}$, let us define the truncated sum
\begin{equation}\label{eq:gamma}
    \Gamma_{k,s}(i)\triangleq\mathbb{E}\left[\sum_{t=0}^{s-1}\Phi(k+t,k)^\top\Phi(k+t,k)\;\middle|\;r_k=i\right], \qquad \Gamma_{k,0}(i)=0
\end{equation}
Consider $\Gamma_{k,s+1}(i)$, and since $\Phi(k+1+t,k)=\Phi(k+1+t,k+1)\Phi_k(i)$, we get
\begin{equation}\label{eq:gamma_rec}
\begin{aligned}
    \Gamma_{k,s+1}(i) &= I + \sum_{t=0}^{s-1}\mathbb{E}\left[\Phi_k^\top(i)\,\Phi(k+1+t,k+1)^\top\Phi(k+1+t,k+1)\,\Phi_k(i)\;\middle|\;r_k=i\right]\\
    &= I + \mathbb{E}\left[\Phi_k^\top(i)\,\Gamma_{k+1,s}(r_{k+1})\,\Phi_k(i)\;\middle|\;r_k=i\right]
     \;=\; I + \sum_{j\in\mathcal{N}}p_{ij}\,\mathbb{E}\left[\Phi_k^\top(i)\,\Gamma_{k+1,s}(j)\,\Phi_k(i)\right]\\
    &= I + \phi_k^\top(i)\mathcal{E}^i(\Gamma_{k+1,s})\phi_k(i) + \sum_{l=1}^{m_1} A_{k,l}^\top(i)\mathcal{E}^i(\Gamma_{k+1,s})A_{k,l}(i) + \sum_{l=1}^{m_2} K_k^\top(i)B_{k,l}^\top(i)\mathcal{E}^i(\Gamma_{k+1,s})B_{k,l}(i)K_k(i)\\
    &= \mathcal{L}_k^i(\Gamma_{k+1,s}) + I
\end{aligned}
\end{equation}

We now show that $\Gamma_{k,s}$ converges as $s\rightarrow\infty$. Each summand in \cref{eq:gamma} is positive semidefinite, so $\Gamma_{k,s}$ is nondecreasing in $s$. Iterating \cref{eq:gamma_rec} over one period, and using the $T$-periodicity of the system, i.e. $\Gamma_{k+T,s}=\Gamma_{k,s}$, we get:
\begin{equation}\label{eq:gamma_period}
    \Gamma_{k,s+T} = \mathcal{H}_T^{(k)}(\Gamma_{k,s}) + \Gamma_{k,T}, \qquad \mathcal{H}_T^{(k)} \triangleq \mathcal{L}_k\circ\mathcal{L}_{k+1}\circ\cdots\circ\mathcal{L}_{k+T-1}
\end{equation}
where the constant term follows by setting $s=0$, for which $\Gamma_{k,0}=0$. Since the system is MSS, $\mathcal{G}_T$ is contractive, and hence so is its adjoint $\mathcal{H}_T$ from \cref{prop:adjoint}. Iterating \cref{eq:gamma_period} then gives the series $\Gamma_{k,sT}=\sum_{t=0}^{s-1}\left(\mathcal{H}_T^{(k)}\right)^{t}(\Gamma_{k,T})$, so the limit
\begin{equation}\label{Pkdef}
    P_k(i)\triangleq\lim_{s\rightarrow\infty}\Gamma_{k,s}(i) = \sum_{t=0}^\infty \mathbb{E}\left[\Phi(k+t,k)^\top\Phi(k+t,k) \mid r_k=i \right]
\end{equation}
exists, where the second equality follows from the monotone convergence theorem~\cite{durrettProbabilityTheoryExamples2019}. Taking $s\rightarrow\infty$ in \cref{eq:gamma_rec}, we get
\begin{equation}\label{thm4finaleq}
    P_k(i) = \mathcal{L}_k^i(P_{k+1}) + I
\end{equation}
which implies $P_k(i) - \mathcal{L}_k^i(P_{k+1})=I\succ 0$. Since $P_k(i)\succeq\Gamma_{k,1}(i)=I\succ0$ and $P_{k+T}=P_k$, this is the required $T$-periodic finite set of positive definite matrices.
\end{proof}
In our previous work~\cite{shrivastavaRobustControllerSynthesis2024a}, we utilized this Lyapunov-like stability constraint to bound the cost using the performance analysis lemma. We extend the performance analysis lemma now to LTVPM with multiplicative noise: 
\begin{lemma}\label{lemma_performance_noise}
    (Performance analysis lemma with Multiplicative noise) Suppose there exist $T$-periodic mode-dependent positive semidefinite matrices $M_k(i)$ and positive definite matrices $P_k(i)$, and a positive scalar constant $\beta>0$ such that 
    \begin{equation}\label{per_lemma_2}
        \begin{aligned}
            P_k(i) \succ \mathcal{L}_k^i(P_{k+1}) + \tfrac{M_k(i)}{\beta},\quad\forall k \in \mathbb{Z}_{0:T-1}, i\in\mathcal{N}
        \end{aligned}
    \end{equation}
then the corresponding MJLS is MSS, with $\mathbb{E}  \left[\sum_{k=0}^\infty \delta\mathbf{x}_k^\top M_k(i)\delta\mathbf{x}_k\right] \leq \beta\, \mathbb{E}[\delta\mathbf{x}_0^\top P_0(i) \delta\mathbf{x}_0]$.
\end{lemma}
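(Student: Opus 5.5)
The plan is a stochastic Lyapunov telescoping argument built on the mode-indexed quadratic function $V_k \triangleq \delta\mathbf{x}_k^\top P_k(r_k)\,\delta\mathbf{x}_k$. Mean square stability is immediate. Since $M_k(i)\succeq 0$ and $\beta>0$, condition \cref{per_lemma_2} implies $P_k(i)-\mathcal{L}_k^i(P_{k+1})\succ M_k(i)/\beta\succeq 0$ for all $k\in\mathbb{Z}_{0:T-1}$ and $i\in\mathcal{N}$. This is exactly the Lyapunov-like condition \cref{mss_eq1}, so the preceding theorem gives MSS.

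For the cost bound, the key step is a one-step conditional identity. Conditioning on $\mathcal{F}_k$ with $r_k=i$, I would use that the multiplicative noises at step $k$ are zero-mean, mutually uncorrelated with unit variance, and independent of $\delta\mathbf{x}_k$ and of the next mode $r_{k+1}$. Expanding $\Phi_k(i)$ from \cref{eq:Phi_k}, the cross terms vanish and the next-mode expectation produces $\mathcal{E}^i(P_{k+1})$, giving
\begin{equation*}
\mathbb{E}\left[V_{k+1}\mid\mathcal{F}_k\right]=\delta\mathbf{x}_k^\top\,\mathcal{L}_k^{r_k}(P_{k+1})\,\delta\mathbf{x}_k .
\end{equation*}
This is the same computation that appears inside \cref{eq:gamma_rec}, with $\Gamma_{k+1,s}$ replaced by $P_{k+1}$. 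Periodicity extends \cref{per_lemma_2} to all $k\ge 0$, since $P_{k+T}=P_k$, $M_{k+T}=M_k$ and $\mathcal{L}_{k+T}=\mathcal{L}_k$. Combining the identity with \cref{per_lemma_2} then yields
\begin{equation*}
\mathbb{E}\left[V_{k+1}\mid\mathcal{F}_k\right]\le V_k-\tfrac{1}{\beta}\,\delta\mathbf{x}_k^\top M_k(r_k)\,\delta\mathbf{x}_k .
\end{equation*}

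Next I would take total expectations and sum from $k=0$ to $K-1$. The sum telescopes to
\begin{equation*}
\tfrac{1}{\beta}\sum_{k=0}^{K-1}\mathbb{E}\left[\delta\mathbf{x}_k^\top M_k(r_k)\delta\mathbf{x}_k\right]\le \mathbb{E}[V_0]-\mathbb{E}[V_K]\le\mathbb{E}[V_0],
\end{equation*}
where the last inequality uses $P_K\succ0$. Every summand is nonnegative, so letting $K\to\infty$ and applying the monotone convergence theorem gives the stated bound, with $i$ read as the random mode $r_k$ and $r_0$ in the right-hand side. The boundedness of the noise is not needed here; only the second-moment properties matter.

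The main obstacle is justifying the conditional identity rigorously. Two things need care. First, the noise terms $w_{k,l}$ must be independent of $r_{k+1}$ given $r_k$, and independent of the past. Second, the unit-variance, uncorrelated structure is what makes each noise matrix contribute exactly $A_{k,l}^\top(i)\mathcal{E}^i(P_{k+1})A_{k,l}(i)$ and $K_k^\top(i)B_{k,l}^\top(i)\mathcal{E}^i(P_{k+1})B_{k,l}(i)K_k(i)$. I would state these independence assumptions explicitly, as was done implicitly in the proof of \cref{thm:moment_propagation}. With them in place, the rest of the argument is routine.
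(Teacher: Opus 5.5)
Your proposal is correct and follows essentially the same route as the paper: you establish the conditional identity $\mathbb{E}[\delta\mathbf{x}_{k+1}^\top P_{k+1}(r_{k+1})\delta\mathbf{x}_{k+1}\mid\mathcal{F}_k]=\delta\mathbf{x}_k^\top\mathcal{L}_k^{r_k}(P_{k+1})\delta\mathbf{x}_k$, take total expectations, telescope, and pass to the limit. The only differences are that you drop the terminal term $\mathbb{E}[\delta\mathbf{x}_K^\top P_K(r_K)\delta\mathbf{x}_K]\ge 0$ by positivity and justify the sum--expectation interchange by monotone convergence, whereas the paper sums over whole periods and uses MSS to make that term vanish; this makes your cost bound independent of the stability argument but changes nothing essential.
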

\begin{proof}
    Multiplying both sides of \cref{per_lemma_2} by $\delta\mathbf{x}_k^\top$ and $\delta\mathbf{x}_k$, we get:
    \begin{align} \label{lemma_p0}
        \delta\mathbf{x}_k^\top P_k(i)\delta\mathbf{x}_k \geq \delta\mathbf{x}_k^\top \mathcal{L}_k^i(P_{k+1})\delta\mathbf{x}_k + \frac{\delta\mathbf{x}_k^\top M_k(i)\delta\mathbf{x}_k}{\beta}
    \end{align}
    We first show that the first term on the RHS, $\delta\mathbf{x}_k^\top \mathcal{L}_k^i(P_{k+1})\delta\mathbf{x}_k$, is equivalent to $\mathbb{E}[\delta\mathbf{x}^\top_{k+1}P_{k+1}\delta\mathbf{x}_{k+1}|\mathcal{F}_k]$:
    \begin{align*}
        \mathbb{E}[\delta\mathbf{x}^\top_{k+1}P_{k+1}\delta\mathbf{x}_{k+1}|\mathcal{F}_k] = \mathbb{E}[\delta\mathbf{x}_k^\top \Phi^\top_k(i)P_{k+1}\Phi_k(i) \delta\mathbf{x}_k|\mathcal{F}_k]
    \end{align*}
    Since $\delta\mathbf{x}_k$ is $\mathcal{F}_k$-measurable, $\mathbb{E}[\delta\mathbf{x}_k^\top \Phi^\top_k(i)P_{k+1}\Phi_k(i) \delta\mathbf{x}_k|\mathcal{F}_k] = \delta\mathbf{x}_k^\top \mathbb{E}[\Phi^\top_k(i)P_{k+1}\Phi_k(i)|\mathcal{F}_k] \delta\mathbf{x}_k$. Expanding the inner expectation, we get:
    \begin{align*}
        &\mathbb{E}[\Phi^\top_k(i)P_{k+1}\Phi_k(i)|\mathcal{F}_k] = \mathbb{E}\left[ \left\{\phi_k(i) + \sum_{l=1}^{m_1}w^{\{x\}}_{k,l}(i)A_{k,l}(i) + \sum_{l=1}^{m_2}w^{\{u\}}_{k,l}(i)B_{k,l}(i)K_k(i)\right\}^\top P_{k+1}\right.\\
        &\hspace{4.5cm}\left.\times\left\{\phi_k(i) + \sum_{l=1}^{m_1}w^{\{x\}}_{k,l}(i)A_{k,l}(i) + \sum_{l=1}^{m_2}w^{\{u\}}_{k,l}(i)B_{k,l}(i)K_k(i)\right\} \,\middle|\, \mathcal{F}_k \right]\\
        &= \mathbb{E}\left[\left\{\phi_k(i)^\top P_{k+1}\phi_k(i) + \sum_{l=1}^{m_1}A_{k,l}(i)^\top P_{k+1}A_{k,l}(i) + \sum_{l=1}^{m_2}K_k^\top(i)B_{k,l}(i)^\top P_{k+1}B_{k,l}(i)K_k(i) \right\} \,\middle|\, \mathcal{F}_k \right]\\
        &= \phi_k(i)^\top \mathcal{E}^i(P_{k+1})\phi_k(i) + \sum_{l=1}^{m_1}A_{k,l}(i)^\top \mathcal{E}^i(P_{k+1})A_{k,l}(i)+ \sum_{l=1}^{m_2}K_k^\top(i)B_{k,l}(i)^\top \mathcal{E}^i(P_{k+1})B_{k,l}(i)K_k(i)
    \end{align*}
    which follows from the WSS noise terms being zero-mean, mutually uncorrelated with unit second moments, and independent of $\mathcal{F}_k$. The last step takes the expectation over $r_{k+1}$, which gives $\mathcal{L}_k^i(P_{k+1})$ by \cref{operator_defs}. Therefore, $\delta\mathbf{x}_k^\top \mathcal{L}_k^i(P_{k+1})\delta\mathbf{x}_k=\mathbb{E}[\delta\mathbf{x}^\top_{k+1}P_{k+1}\delta\mathbf{x}_{k+1}|\mathcal{F}_k]$. 
    Using this in \cref{lemma_p0}, we get:
    \begin{align}\label{lemma_p2}
        \delta\mathbf{x}_k^\top P_k(i)\delta\mathbf{x}_k \geq \mathbb{E}[\delta\mathbf{x}^\top_{k+1}P_{k+1}\delta\mathbf{x}_{k+1}|\mathcal{F}_k] + \frac{\delta\mathbf{x}_k^\top M_k(i)\delta\mathbf{x}_k}{\beta}
    \end{align}
    Taking the expectation of \cref{lemma_p2} and using law of total expectation, we get:
    \begin{align}
        \mathbb{E}[\delta\mathbf{x}_k^\top P_k(i)\delta\mathbf{x}_k] &\geq \mathbb{E}[\mathbb{E}[\delta\mathbf{x}^\top_{k+1}P_{k+1}\delta\mathbf{x}_{k+1}|\mathcal{F}_k]] + \frac{\mathbb{E}[\delta\mathbf{x}_k^\top M_k(i)\delta\mathbf{x}_k]}{\beta}\notag\\
        &= \mathbb{E}[\delta\mathbf{x}^\top_{k+1}P_{k+1}\delta\mathbf{x}_{k+1}] + \frac{\mathbb{E}[\delta\mathbf{x}_k^\top M_k(i)\delta\mathbf{x}_k]}{\beta}\label{lemma_p3}
    \end{align}
    Summing \cref{lemma_p3} from $k=0$ to $k=T-1$, we get:
    \begin{align}
        \mathbb{E}[\delta\mathbf{x}_0^\top P_0(i)\delta\mathbf{x}_0] \geq \mathbb{E}[\delta\mathbf{x}_T^\top P_T(i)\delta\mathbf{x}_T] + \frac{\sum_{k=0}^{T-1}\mathbb{E}[\delta\mathbf{x}_k^\top M_k(i)\delta\mathbf{x}_k]}{\beta}\label{lemma_p4}
    \end{align}
    Summing \cref{lemma_p4} over $s$ time periods, we get:
    \begin{align}
        \mathbb{E}[\delta\mathbf{x}_0^\top P_0(i)\delta\mathbf{x}_0] \geq \mathbb{E}[\delta\mathbf{x}_{sT}^\top P_{sT}(i)\delta\mathbf{x}_{sT}] + \frac{\sum_{k=0}^{sT-1}\mathbb{E}[\delta\mathbf{x}_k^\top M_k(i)\delta\mathbf{x}_k]}{\beta}\label{lemma_p5}
    \end{align}
    Since $M_k(i)\succeq0$ and \cref{per_lemma_2} implies MSS, the term $\mathbb{E}[\delta\mathbf{x}_{sT}^\top P_{sT}(i)\delta\mathbf{x}_{sT}]$ vanishes as $s\rightarrow\infty$. Taking that limit therefore gives:
    \begin{equation*}
        J \leq \beta \cdot \mathbb{E}[\delta\mathbf{x}_0^\top P_0(i) \delta\mathbf{x}_0] \qedhere
    \end{equation*}
\end{proof}
The condition \cref{per_lemma_2} is not directly usable for synthesis, since it is nonlinear in the pair $(P_k(i),K_k(i))$. \Cref{cor:performance_lmi} recasts it as an LMI in the change of variables $S_k(i)=P_k^{-1}(i)$ and $Y_k(i)=K_k(i)S_k(i)$.
\begin{corollary}\label{cor:performance_lmi}
    (LMI form of the performance analysis lemma) Let $M_k(i)=Q(i)+K_k^\top(i)R(i)K_k(i)$ be the stage cost of \cref{problem:l_socp}. If there exist a scalar $\beta>0$, a $T$-periodic finite set of positive definite matrices $S_k\in\mathbb{M}^N$, and a $T$-periodic finite set of matrices $Y_k(i)\in\mathbb{R}^{n_u\times n_x}$ satisfying
    \begin{equation}\label{MSS_LMI}
    \begin{bsmallmatrix}
    S_k(i) & \ast & \ast & \cdots & \ast & \ast & \cdots & \ast & \ast & \ast \\
    \ell_{i}^{\top}\left(A_{k}(i) S_k(i)+B_{k}(i) Y_k(i)\right) & \mathbf{S}_{k+1} & \ast & \cdots & \ast & \ast & \cdots & \ast & \ast & \ast \\
    \ell_{i}^{\top}A_{k,1}(i) S_k(i) & 0 & \mathbf{S}_{k+1} & \cdots & \ast & \ast & \cdots & \ast & \ast & \ast \\
    \vdots & \vdots & \vdots & \ddots & \vdots & \vdots & & \vdots & \vdots & \vdots \\
    \ell_{i}^{\top}A_{k,m_1}(i) S_k(i) & 0 & 0 & \cdots & \mathbf{S}_{k+1} & \ast & \cdots & \ast & \ast & \ast \\
    \ell_{i}^{\top}B_{k,1}(i) Y_k(i) & 0 & 0 & \cdots & 0 & \mathbf{S}_{k+1} & \cdots & \ast & \ast & \ast \\
    \vdots & \vdots & \vdots & & \vdots & \vdots & \ddots & \vdots & \vdots & \vdots \\
    \ell_{i}^{\top}B_{k,m_2}(i) Y_k(i) & 0 & 0 & \cdots & 0 & 0 & \cdots & \mathbf{S}_{k+1} & \ast & \ast \\
    Q^{1/2}(i)S_k(i) & 0 & 0 & \cdots & 0 & 0 & \cdots & 0 & \beta I & \ast \\
    R^{1/2}(i)Y_k(i) & 0 & 0 & \cdots & 0 & 0 & \cdots & 0 & 0 & \beta I
    \end{bsmallmatrix}
    \succ 0
    \end{equation}
    $\forall k \in \mathbb{Z}_{0:T-1}, i\in\mathcal{N}$, where $\ell_i\triangleq\begin{bsmallmatrix}\sqrt{p_{i1}}I & \ldots & \sqrt{p_{iN}}I\end{bsmallmatrix}$ and $\mathbf{S}_{k+1}\triangleq\mathrm{blkdiag}(S_{k+1}(1),\ldots,S_{k+1}(N))$, then \cref{per_lemma_2} holds with $P_k(i)=S_k^{-1}(i)$ and $K_k(i)=Y_k(i)S_k^{-1}(i)$. Consequently, the closed-loop LTVPM is MSS and $J\leq\beta\,\mathbb{E}[\delta\mathbf{x}_0^\top P_0(i)\delta\mathbf{x}_0]$.
\end{corollary}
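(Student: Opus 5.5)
The plan is to show that the LMI \cref{MSS_LMI} is, after a Schur complement and a congruence transformation, exactly the inequality \cref{per_lemma_2} with $P_k(i)=S_k^{-1}(i)$, $K_k(i)=Y_k(i)S_k^{-1}(i)$ and $M_k(i)=Q(i)+K_k^\top(i)R(i)K_k(i)$. Once that is done, \cref{lemma_performance_noise} gives both MSS and the cost bound. No new stochastic argument is needed.

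\textbf{Step 1: Schur complement.} Partition \cref{MSS_LMI} into its $(1,1)$ block $S_k(i)$ and the block-diagonal lower-right part
\[
D \triangleq \mathrm{blkdiag}(\mathbf{S}_{k+1},\ldots,\mathbf{S}_{k+1},\beta I,\beta I),
\]
which contains $1+m_1+m_2$ copies of $\mathbf{S}_{k+1}$. Each $S_{k+1}(j)\succ0$ and $\beta>0$, so $D\succ0$. The Schur complement lemma then says \cref{MSS_LMI} is equivalent to $S_k(i)-C^\top D^{-1}C\succ0$, where $C$ is the first block column below the diagonal.

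\textbf{Step 2: expand $C^\top D^{-1}C$.} Because $D$ is block diagonal, this is a sum of separate terms. The key identity is
\[
\ell_i\,\mathbf{S}_{k+1}^{-1}\,\ell_i^\top=\sum_j p_{ij}S_{k+1}^{-1}(j)=\mathcal{E}^i(P_{k+1}).
\]
Substitute $Y_k(i)=K_k(i)S_k(i)$, so that $A_k(i)S_k(i)+B_k(i)Y_k(i)=\phi_k(i)S_k(i)$. The terms become:
\begin{itemize}
\item the nominal row: $S_k\phi_k^\top\mathcal{E}^i(P_{k+1})\phi_kS_k$;
\item the state-noise rows: $S_kA_{k,l}^\top\mathcal{E}^i(P_{k+1})A_{k,l}S_k$;
\item the control-noise rows: $S_kK_k^\top B_{k,l}^\top\mathcal{E}^i(P_{k+1})B_{k,l}K_kS_k$;
\item the cost rows: $\beta^{-1}S_k(Q+K_k^\top RK_k)S_k$.
\end{itemize}
Here the argument $(i)$ is dropped for brevity. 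Comparing with \cref{operator_defs}, the first three groups sum to $S_k(i)\mathcal{L}_k^i(P_{k+1})S_k(i)$. The inequality therefore reads
\[
S_k(i)-S_k(i)\big[\mathcal{L}_k^i(P_{k+1})+M_k(i)/\beta\big]S_k(i)\succ0.
\]

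\textbf{Step 3: congruence.} Pre- and post-multiply by $S_k^{-1}(i)=P_k(i)$. Congruence preserves strict definiteness, so this yields $P_k(i)\succ\mathcal{L}_k^i(P_{k+1})+M_k(i)/\beta$, which is \cref{per_lemma_2}. The variables $S_k$ and $Y_k$ are $T$-periodic, so $P_k$ and $K_k$ are too. $M_k(i)\succeq0$ because $Q\succeq0$ and $R\succ0$. Every hypothesis of \cref{lemma_performance_noise} then holds, and it yields MSS and $J\le\beta\,\mathbb{E}[\delta\mathbf{x}_0^\top P_0(i)\delta\mathbf{x}_0]$.

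\textbf{Main obstacle.} The only delicate point is the bookkeeping in Step 2. One must verify that each noise block really pairs with its own copy of $\mathbf{S}_{k+1}$, so that no cross terms appear; this is what the zero off-diagonal blocks in the LMI encode. One must also confirm that the $\ell_i$ factors reproduce the transition-weighted expectation $\mathcal{E}^i$ and not a sum of independent terms. Everything else is the standard Schur-complement argument for LMI synthesis.
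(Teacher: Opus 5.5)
Your proposal is correct and follows the paper's proof. It uses the same Schur complement with respect to $\mathrm{blkdiag}(\mathbf{S}_{k+1},\ldots,\mathbf{S}_{k+1},\beta I,\beta I)$, the same identity $\ell_i\mathbf{S}_{k+1}^{-1}\ell_i^\top=\mathcal{E}^i(P_{k+1})$, and the same congruence by $S_k(i)$, merely run from the LMI toward \cref{per_lemma_2} instead of the reverse (harmless, since every step is an equivalence), and your checks of $M_k(i)\succeq0$ and $T$-periodicity of $P_k,K_k$ supply the remaining hypotheses of \cref{lemma_performance_noise}.
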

\begin{proof}
    Under the control law \cref{control_law}, the stage cost of \cref{problem:l_socp} is $\delta\mathbf{x}_k^\top Q(i)\delta\mathbf{x}_k+\mathbf{u}_k^\top R(i)\mathbf{u}_k=\delta\mathbf{x}_k^\top M_k(i)\delta\mathbf{x}_k$, so the cost bound of \cref{lemma_performance_noise} is exactly the claimed bound on $J$. It therefore suffices to show that \cref{MSS_LMI} is equivalent to \cref{per_lemma_2} under the stated change of variables, i.e. $S_k(i)=P_k^{-1}(i)$ and $Y_k(i)=K_k(i)S_k(i)$.

    Since $\ell_i\mathbf{S}_{k+1}^{-1}\ell_i^\top=\sum_{j\in\mathcal{N}}p_{ij}S_{k+1}^{-1}(j)=\mathcal{E}^i(P_{k+1})$, substituting the definition of $\mathcal{L}_k^i$ from \cref{operator_defs} into \cref{per_lemma_2} gives
    \begin{equation}\label{cor_p1}
        \begin{aligned}
            P_k(i) &- \phi_k^\top(i)\ell_i\mathbf{S}_{k+1}^{-1}\ell_i^\top\phi_k(i)
            - \sum_{l=1}^{m_1}A_{k,l}^\top(i)\ell_i\mathbf{S}_{k+1}^{-1}\ell_i^\top A_{k,l}(i)\\
            &- \sum_{l=1}^{m_2}K_k^\top(i)B_{k,l}^\top(i)\ell_i\mathbf{S}_{k+1}^{-1}\ell_i^\top B_{k,l}(i)K_k(i)
            - \tfrac{1}{\beta}\left(Q(i)+K_k^\top(i)R(i)K_k(i)\right) \succ 0
        \end{aligned}
    \end{equation}
    Pre- and post-multiplying \cref{cor_p1} by $S_k(i)$, we get
    \begin{equation}\label{cor_p2}
        \begin{aligned}
            S_k(i) &- \left(A_k(i)S_k(i)+B_k(i)Y_k(i)\right)^\top\ell_i\mathbf{S}_{k+1}^{-1}\ell_i^\top\left(A_k(i)S_k(i)+B_k(i)Y_k(i)\right)\\
            &- \sum_{l=1}^{m_1}S_k(i)A_{k,l}^\top(i)\ell_i\mathbf{S}_{k+1}^{-1}\ell_i^\top A_{k,l}(i)S_k(i) - \sum_{l=1}^{m_2}Y_k^\top(i)B_{k,l}^\top(i)\ell_i\mathbf{S}_{k+1}^{-1}\ell_i^\top B_{k,l}(i)Y_k(i)\\
            &- \left(Q^{1/2}(i)S_k(i)\right)^\top(\beta I)^{-1}\left(Q^{1/2}(i)S_k(i)\right)
            - \left(R^{1/2}(i)Y_k(i)\right)^\top(\beta I)^{-1}\left(R^{1/2}(i)Y_k(i)\right) \succ 0
        \end{aligned}
    \end{equation}
    \Cref{cor_p2} is the Schur complement of the block-diagonal matrix $\mathrm{blkdiag}\left(\mathbf{S}_{k+1},\ldots,\mathbf{S}_{k+1},\beta I,\beta I\right)$ in \cref{MSS_LMI}. Since $S_{k+1}(j)\succ0$ and $\beta>0$, \cref{MSS_LMI} holds if and only if \cref{cor_p2} does, and hence if and only if \cref{per_lemma_2} does.
\end{proof}
\Cref{cor:performance_lmi} enforces MSS while bounding the cost. To enforce path constraints, we now introduce the notion of a $T$-periodic robust positively invariant (RPI) sequence of sets, which will allow us to enforce path constraints with probability one at all time steps. 
\begin{definition}[$T$-periodic RPI sequence]\label{def:rpi}
    Let $\{\mathscr{E}_k\}_{k=0}^{T-1}$ be a $T$-periodic sequence of  sets, i.e.  $\mathscr{E}_{k+nT}\triangleq\mathscr{E}_k$ for all $n\in\mathbb{N}$. The sequence is called \emph{$T$-periodic robust positively invariant} (RPI) for the closed-loop LTVPM \cref{concise_mjls} if, for every $k\in\mathbb{Z}_{\geq0}$, every active mode $i\in\mathcal{N}$, and every admissible noise realization satisfying $(w^{\{x\}}_{k,l}(i))^2\leq\delta_x$, $l\in\mathbb{Z}_{1:m_1}$, and $(w^{\{u\}}_{k,l}(i))^2\leq\delta_u$, $l\in\mathbb{Z}_{1:m_2}$,
    \begin{equation*}
        \delta\mathbf{x}_k\in\mathscr{E}_k \;\Longrightarrow\; \delta\mathbf{x}_{k+1}=\Phi_k(i)\delta\mathbf{x}_k\in\mathscr{E}_{k+1}.
    \end{equation*}
\end{definition}
Since we define the RPI sequence of sets for every noise realization, it holds along every trajectory of the Markov chain and noise realization. We now leverage this concept in \Cref{lemma_invariant_sets} to provide LMI conditions which admit such a sequence of $T$-periodic robust positively invariant sets.
\begin{lemma}\label{lemma_invariant_sets}
    (Robust Invariant Sets for LTVPM with Multiplicative Noise) For a finite sequence of tuples of positive definite matrices $\{P_k(i)\}_{i \in \mathcal{N}}$ with $P_T(i)\triangleq P_0(i)$, the sequence $\{\mathscr{E}_k\}_{k=0}^{T-1}$ defined by $\mathscr{E}_k=\{\delta\mathbf{x}:\delta\mathbf{x}^\top P_k(i) \delta\mathbf{x} \leq 1, \; \forall i \in \mathcal{N}\}$ is $T$-periodic robust positively invariant in the sense of \cref{def:rpi}, and satisfies $\mathcal{D}_0\subseteq\mathscr{E}_0$ for the initial-condition hull generated by $\{\delta\mathbf{x}_{0,v}\}_{v \in \mathbb{Z}_{1:c}}$, if there exist two sequences of non-negative scalars $\{\tau_{l}\}_{l\in\mathbb{Z}_{1:m_1}}$ and $\{\eta_{l}\}_{l\in\mathbb{Z}_{1:m_2}}$ such that the following holds $\forall k\in\mathbb{Z}_{0:T-1}$ and $\forall (i,j)\in\mathcal{N}\times\mathcal{N}$:
    \begin{align}
        &\hspace{6cm}\begin{bsmallmatrix}\label{l3_t1}
            1 & \ast \\
            \delta\mathbf{x}_{0, v} & S_0(i)
        \end{bsmallmatrix}
        \succeq 0\\
        &\begin{bsmallmatrix}\label{l3_t2}
                S_k(i) - \tau_{\text{total}} I_{n_x} & \ast & \dots & \ast & \ast & \dots & \ast & \ast \\
                0_{n_x} & \tau_{1} I_{n_x} & \dots & \ast & \ast & \dots & \ast & \ast \\
                \vdots & \vdots & \ddots & \vdots & \vdots & \ddots & \vdots & \vdots \\
                0_{n_x} & 0_{n_x} & \dots & \tau_{m_1} I_{n_x} & \ast & \dots & \ast & \ast \\
                0_{n_x} & 0_{n_x} & \dots & 0_{n_x} & \eta_{1} I_{n_x} & \dots & \ast & \ast \\
                \vdots & \vdots & \ddots & \vdots & \vdots & \ddots & \vdots & \vdots \\
                0_{n_x} & 0_{n_x} & \dots & 0_{n_x} & 0_{n_x} & \dots & \eta_{m_2} I_{n_x} & \ast \\
                A_k(i) S_k(i) + B_k(i) Y_k(i) & A_{k,1}(i) S_k(i) & \dots & A_{k,m_1}(i) S_k(i) & B_{k,1}(i) Y_k(i) & \dots & B_{k,m_2}(i) Y_k(i) & S_{k+1}(j)
            \end{bsmallmatrix}
            \succeq 0
    \end{align}
    where $S_k(i)=P_k(i)^{-1}$, $S_T(i)\triangleq S_0(i)$, and $Y_k(i)=K_k(i)S_k(i)$.
\end{lemma}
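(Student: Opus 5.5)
The plan is to prove the two claims separately. Containment of the initial set follows from a Schur complement and convexity. Robust invariance follows from a Schur complement of \cref{l3_t2} with respect to $S_{k+1}(j)$, followed by an S-procedure-type evaluation of the resulting quadratic form along a noise-structured vector. Throughout, I read the undefined scalar in \cref{l3_t2} as $\tau_{\text{total}}\triangleq\delta_x\sum_{l=1}^{m_1}\tau_l+\delta_u\sum_{l=1}^{m_2}\eta_l$. This is the choice that makes the bounds $(w^{\{x\}}_{k,l})^2\le\delta_x$ and $(w^{\{u\}}_{k,l})^2\le\delta_u$ enter the argument.

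\emph{Initial set.} Since $S_0(i)\succ0$, the Schur complement of \cref{l3_t1} gives $1-\delta\mathbf{x}_{0,v}^\top S_0^{-1}(i)\delta\mathbf{x}_{0,v}\ge0$. Hence each vertex satisfies $\delta\mathbf{x}_{0,v}^\top P_0(i)\delta\mathbf{x}_{0,v}\le1$ for every $i$. Each sublevel set $\{\delta\mathbf{x}:\delta\mathbf{x}^\top P_0(i)\delta\mathbf{x}\le1\}$ is convex, so their intersection $\mathscr{E}_0$ is convex. It therefore contains the convex hull of the vertices, which is $\mathcal{D}_0$.

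\emph{Invariance.} Fix $k$, the active mode $i$, an admissible noise realization, and $\delta\mathbf{x}_k\in\mathscr{E}_k$; in particular $\delta\mathbf{x}_k^\top P_k(i)\delta\mathbf{x}_k\le1$. Write \cref{l3_t2} as $\begin{bsmallmatrix}D&G^\top\\G&S_{k+1}(j)\end{bsmallmatrix}\succeq0$ with
\[
D=\mathrm{blkdiag}(S_k(i)-\tau_{\text{total}}I,\tau_1I,\ldots,\tau_{m_1}I,\eta_1I,\ldots,\eta_{m_2}I),
\]
\[
G=[A_kS_k+B_kY_k,\;A_{k,1}S_k,\ldots,A_{k,m_1}S_k,\;B_{k,1}Y_k,\ldots,B_{k,m_2}Y_k],
\]
where all matrices are evaluated at mode $i$. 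Since $S_{k+1}(j)\succ0$, the Schur complement gives $D\succeq G^\top P_{k+1}(j)G$.

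Next set $\xi=P_k(i)\delta\mathbf{x}_k$, so that $\delta\mathbf{x}_k=S_k(i)\xi$ and $\xi^\top S_k(i)\xi\le1$. Form the stacked vector $v=[\xi;\,w^{\{x\}}_{k,1}\xi;\ldots;w^{\{x\}}_{k,m_1}\xi;\,w^{\{u\}}_{k,1}\xi;\ldots;w^{\{u\}}_{k,m_2}\xi]$. Using $Y_k=K_kS_k$ and \cref{eq:Phi_k}, one checks that $Gv=\Phi_k(i)S_k(i)\xi=\Phi_k(i)\delta\mathbf{x}_k=\delta\mathbf{x}_{k+1}$. This yields
\[
\delta\mathbf{x}_{k+1}^\top P_{k+1}(j)\delta\mathbf{x}_{k+1}\le v^\top Dv=\xi^\top S_k(i)\xi-\tau_{\text{total}}\|\xi\|^2+\Big(\textstyle\sum_l\tau_l(w^{\{x\}}_{k,l})^2+\sum_l\eta_l(w^{\{u\}}_{k,l})^2\Big)\|\xi\|^2 .
\]
Because $\tau_l,\eta_l\ge0$ and the noise bounds hold almost surely, the bracketed term is at most $\tau_{\text{total}}$. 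Hence $\delta\mathbf{x}_{k+1}^\top P_{k+1}(j)\delta\mathbf{x}_{k+1}\le\xi^\top S_k(i)\xi\le1$.

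This bound holds for every $j\in\mathcal{N}$, because \cref{l3_t2} is imposed for all pairs $(i,j)$. Therefore $\delta\mathbf{x}_{k+1}\in\mathscr{E}_{k+1}$ regardless of the next mode $r_{k+1}$. Using $S_T\triangleq S_0$ together with the periodicity of the data and of $\mathscr{E}_k$, the argument extends to all $k\in\mathbb{Z}_{\ge0}$.

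The main obstacle is the middle step: choosing the vector $v$ so that $Gv$ reproduces the full noisy closed-loop map $\Phi_k(i)\delta\mathbf{x}_k$, while $D$ produces only the scalar noise bounds without cross terms. The block-diagonal structure of $D$ is what allows this. The same step is also where the interpretation of $\tau_{\text{total}}$ must be fixed.
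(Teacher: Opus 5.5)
Your proof is correct and takes essentially the same approach as the paper: a Schur complement of the vertex LMI gives the initial-set containment, and for invariance you combine a Schur complement of the invariance LMI with an S-procedure bound along the noise-stacked direction $\boldsymbol{\xi}\otimes I_{n_x}$, $\boldsymbol{\xi}=[1,\,w^{\{x\}},\,w^{\{u\}}]^\top$, using exactly the paper's $\tau_{\text{total}}=\delta_x\sum_l\tau_l+\delta_u\sum_l\eta_l$. The only differences are presentational: you run the argument forward on a specific vector rather than deriving the LMI backward from the matrix decrease condition $\Phi_k^\top(i)P_{k+1}(j)\Phi_k(i)\preceq P_k(i)$, and you spell out the convex-hull containment step that the paper leaves implicit.
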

\begin{proof}
    Using Schur complement, we can rewrite \cref{l3_t1} as:
    \begin{align}
        \delta\mathbf{x}_{0, v}^\top P_0(i) \delta\mathbf{x}_{0, v} \leq 1
    \end{align}
    Now, we can use a Lyapunov-like inequality to guarantee that $\delta\mathbf{x}_{k+1}\in\mathscr{E}_{k+1}$ if $\delta\mathbf{x}_k\in\mathscr{E}_k$, which is given by:
    \begin{align}
        \delta\mathbf{x}_{k+1}^\top P_{k+1}(i_{k+1}) \delta\mathbf{x}_{k+1} \leq \delta\mathbf{x}_k^\top P_k(i_k) \delta\mathbf{x}_k \leq \ldots \leq \delta\mathbf{x}_0^\top P_0(i_0) \delta\mathbf{x}_0 \leq 1
    \end{align}
    where $i_k$ refers to the mode at time-step $k$. At time-step $k$, we have:
    \begin{align*}
        \delta\mathbf{x}_{k+1}^\top P_{k+1}(j) \delta\mathbf{x}_{k+1} -\delta\mathbf{x}_{k}^\top P_k(i) \delta\mathbf{x}_{k}&\leq 0\\
        \text{or, } \hspace{2cm}\Phi^\top_k(i) P_{k+1}(j) \Phi_k(i) - P_k(i) &\preceq 0
    \end{align*}
    Substituting $\Phi_k(i)$ from \cref{eq:Phi_k}, pre- and post-multiplying with $S_k(i)=P_k(i)^{-1}$, and using $Y_k(i)=K_k(i)S_k(i)$, we get:
    \begin{align}
        \psi_k(i)^\top S^{-1}_{k+1}(j) \psi_k(i) - S_k(i) \preceq 0
    \end{align}
    where $\psi_k(i) = A_k(i) S_k(i) + B_k(i) Y_k(i) + \sum_{l=1}^{m_1} w^{\{x\}}_{k,l}(i) A_{k,l}(i) S_k(i) + \sum_{l=1}^{m_2} w^{\{u\}}_{k,l}(i) B_{k,l}(i) Y_k(i)$. This can be written succinctly as:
    \begin{align}\label{l3_sp_1}
        (\boldsymbol{\xi}\otimes I_{n_x}) ^\top \mathbf{M} (\boldsymbol{\xi}\otimes I_{n_x}) \preceq 0
    \end{align}
    where:
    \begin{align}
        \boldsymbol{\xi} = \begin{bmatrix}
            1 & w^{\{x\}}_{k,1}(i) & \cdots & w^{\{x\}}_{k,m_1}(i) & w^{\{u\}}_{k,1}(i) & \cdots & w^{\{u\}}_{k,m_2}(i)
        \end{bmatrix}^\top
    \end{align}
    and $\mathbf{M}$ is given by:
    \begin{align}
        \mathbf{M} =
    \begin{bsmallmatrix}
    \Psi_0^\top S_{k+1}^{-1}(j) \Psi_0 - S_k(i) & \ast & \dots & \ast & \ast & \dots & \ast \\
    \Psi_{A,1}^\top S_{k+1}^{-1}(j) \Psi_0 & \Psi_{A,1}^\top S_{k+1}^{-1}(j) \Psi_{A,1} & \dots & \ast & \ast & \dots & \ast \\
    \vdots & \vdots & \ddots & \vdots & \vdots & \ddots & \vdots \\
    \Psi_{A,m_1}^\top S_{k+1}^{-1}(j) \Psi_0 & \Psi_{A,m_1}^\top S_{k+1}^{-1}(j) \Psi_{A,1} & \dots & \Psi_{A,m_1}^\top S_{k+1}^{-1}(j) \Psi_{A,m_1} & \ast & \dots & \ast \\
    \Psi_{B,1}^\top S_{k+1}^{-1}(j) \Psi_0 & \Psi_{B,1}^\top S_{k+1}^{-1}(j) \Psi_{A,1} & \dots & \Psi_{B,1}^\top S_{k+1}^{-1}(j) \Psi_{A,m_1} & \Psi_{B,1}^\top S_{k+1}^{-1}(j) \Psi_{B,1} & \dots & \ast \\
    \vdots & \vdots & \ddots & \vdots & \vdots & \ddots & \vdots \\
    \Psi_{B,m_2}^\top S_{k+1}^{-1}(j) \Psi_0 & \Psi_{B,m_2}^\top S_{k+1}^{-1}(j) \Psi_{A,1} & \dots & \Psi_{B,m_2}^\top S_{k+1}^{-1}(j) \Psi_{A,m_1} & \Psi_{B,m_2}^\top S_{k+1}^{-1}(j) \Psi_{B,1} & \dots & \Psi_{B,m_2}^\top S_{k+1}^{-1}(j) \Psi_{B,m_2}
    \end{bsmallmatrix}
    \end{align}
    where $\Psi_0 = A_k(i) S_k(i) + B_k(i) Y_k(i), \, \Psi_{A,l} = A_{k,l}(i) S_k(i),$ and $\Psi_{B,l} = B_{k,l}(i) Y_k(i)$.
Now, $\mathbf{M}$ can be written as:
\begin{align}
    \mathbf{M} = \mathbf{\Psi}^\top S_{k+1}^{-1}(j) \mathbf{\Psi} - \mathscr{S}_k(i)
\end{align}
where $\mathbf{\Psi} = \begin{bmatrix} \Psi_0 & \Psi_{A,1} & \dots & \Psi_{A,m_1} & \Psi_{B,1} & \dots & \Psi_{B,m_2} \end{bmatrix}$ is a block row matrix of size $n_x \times (1+m_1+m_2)n_x$, and $\mathscr{S}_k(i) = \mathrm{blkdiag}(S_k(i), 0_{n_x}, \dots, 0_{n_x})$ is a block diagonal matrix of size $(1+m_1+m_2)n_x \times (1+m_1+m_2)n_x$.
Now, from the almost-sure bounds $(w^{\{x\}}_{k,l})^2\leq\delta_x$ and $(w^{\{u\}}_{k,l})^2\leq\delta_u$ on the white noise, we get:
\begin{align}\label{l3_sp_2}
    (\boldsymbol{\xi}\otimes I_{n_x})^\top \mathbf{Q}_{A,l} (\boldsymbol{\xi}\otimes I_{n_x}) \preceq 0, \quad (\boldsymbol{\xi}\otimes I_{n_x})^\top \mathbf{Q}_{B,l} (\boldsymbol{\xi}\otimes I_{n_x}) \preceq 0
\end{align}
where
\begin{subequations}
\begin{align}
\mathbf{Q}_{A,l} &= \mathrm{blkdiag}\left(
    -\delta_x I_{n_x}, \quad 
    \underbrace{0_{n_x}, \dots, 0_{n_x}}_{l-1 \text{ blocks}}, \quad
    \underbrace{I_{n_x}}_{(1+l, 1+l) \text{ block}}, \quad
    \underbrace{0_{n_x}, \dots, 0_{n_x}}_{m_1+m_2-l \text{ blocks}}
\right)\\
\mathbf{Q}_{B,l} &= \mathrm{blkdiag}\left(
    -\delta_u I_{n_x}, \quad 
    \underbrace{0_{n_x}, \dots, 0_{n_x}}_{m_1+l-1 \text{ blocks}}, \quad
    \underbrace{I_{n_x}}_{(1+m_1+l, 1+m_1+l) \text{ block}}, \quad
    \underbrace{0_{n_x}, \dots, 0_{n_x}}_{m_2-l \text{ blocks}}
\right)
\end{align}
\end{subequations}
Now using the lossy S-procedure (Section 2.6.3 in~\cite{boydLinearMatrixInequalities1994}) for multiple constraints, we know that if \cref{l3_sp_2} holds, then \cref{l3_sp_1} holds, if $\exists \; \tau_{l} \geq 0, \; \forall l \in \{1, \dots, m_1\},$ and $\eta_{l} \geq 0 \; \forall l \in \{1, \dots, m_2\}$ such that:
\begin{align}\label{l3_sp_3}
    \mathbf{M} - \sum_{l=1}^{m_1} \tau_{l} \mathbf{Q}_{A,l} - \sum_{l=1}^{m_2} \eta_{l} \mathbf{Q}_{B,l} \preceq 0
\end{align}
which can be written as:
\begin{align}
    (\mathscr{S}_k(i) + \sum_{l=1}^{m_1} \tau_{l} \mathbf{Q}_{A,l} + \sum_{l=1}^{m_2} \eta_{l} \mathbf{Q}_{B,l} )-\mathbf{\Psi}^\top S_{k+1}^{-1}(j) \mathbf{\Psi} \succeq 0
\end{align}
Using Schur complement, we get:
\begin{align}\label{l3_sp_4}
    \begin{bmatrix}
        \mathscr{S}_k(i) + \sum_{l=1}^{m_1} \tau_{l} \mathbf{Q}_{A,l} + \sum_{l=1}^{m_2} \eta_{l} \mathbf{Q}_{B,l} & \ast \\
        \mathbf{\Psi} & S_{k+1}(j)
    \end{bmatrix}
    \succeq 0
\end{align}
Define $\tau_{\text{total}} = \delta_x \sum_{l=1}^{m_1} \tau_{l} + \delta_u \sum_{l=1}^{m_2} \eta_{l}$. Now, expanding the matrix in \cref{l3_sp_4}, we get:
\begin{equation*}
    \begin{bsmallmatrix}
        S_k(i) - \tau_{\text{total}} I_{n_x} & \ast & \dots & \ast & \ast & \dots & \ast & \ast \\
        0_{n_x} & \tau_{1} I_{n_x} & \dots & \ast & \ast & \dots & \ast & \ast \\
        \vdots & \vdots & \ddots & \vdots & \vdots & \ddots & \vdots & \vdots \\
        0_{n_x} & 0_{n_x} & \dots & \tau_{m_1} I_{n_x} & \ast & \dots & \ast & \ast \\
        0_{n_x} & 0_{n_x} & \dots & 0_{n_x} & \eta_{1} I_{n_x} & \dots & \ast & \ast \\
        \vdots & \vdots & \ddots & \vdots & \vdots & \ddots & \vdots & \vdots \\
        0_{n_x} & 0_{n_x} & \dots & 0_{n_x} & 0_{n_x} & \dots & \eta_{m_2} I_{n_x} & \ast \\
        A_k(i) S_k(i) + B_k(i) Y_k(i) & A_{k,1}(i) S_k(i) & \dots & A_{k,m_1}(i) S_k(i) & B_{k,1}(i) Y_k(i) & \dots & B_{k,m_2}(i) Y_k(i) & S_{k+1}(j)
    \end{bsmallmatrix}
    \succeq 0 \qedhere
\end{equation*}
\end{proof}
\Cref{lemma_invariant_sets} provides an LMI-based condition, which can be used to search for controllers that render the sequence $\{\mathscr{E}_k\}$ $T$-periodic robust positively invariant. Combined with \cref{lemma_performance_noise}, we can derive an LMI-based condition for a controller, which is MSS, stays in the RPI sets, and satisfies path constraints with probability 1 while upper bounding the cost. The satisfaction of path constraints with probability one is guaranteed by the RPI sequence provided by \cref{lemma_invariant_sets}, which holds the state within the invariant ellipsoids for every admissible bounded-noise realization and mode sequence. We note that \cref{lemma_invariant_sets} finds this RPI sequence using Lyapunov matrices $P_k(i)$, which are also used in \cref{lemma_performance_noise}. This allows us to solve $\cref{problem:l_socp}$ using the following theorem:
\begin{theorem}\label{LMI1}
    The solution to the following semidefinite program (SDP) synthesizes a stabilizing controller which satisfies the path constraints in \Cref{problem:l_socp}, while minimizing an upper bound $\beta$ on the quadratic cost function in \cref{prob1_J}. The controller is given by $K_k(i)=\bar{Y}_k(i)\bar{S}_k^{-1}(i)$, where $\bar{Y}_k(i)$ and $\bar{S}_k(i)$ are the optimal values of the following SDP:
    \begin{subequations}
    \begin{alignat}{2}
        \min_{\substack{\beta,\,Y_k(i),\,S_k(i)\\[2pt] \tau_{l},\,\eta_{l}}} \quad & \beta \notag\\
        \text{s.t.} \quad & \cref{l3_t1}, \cref{l3_t2}, \cref{MSS_LMI} \notag\\
        & \begin{bsmallmatrix}
            u_{max}(i)^2 I &  \ast \\
            Y_k(i)^{\top} & S_k(i)
        \end{bsmallmatrix} \succeq 0 \label{t4}\\
        & I- H_k(i) S_k(i) H_k^\top(i) \succeq 0 \label{t5}
    \end{alignat}
    \end{subequations}
    $\forall\, k\in\mathbb{Z}_{0:T-1}$ and $\forall\, i,j\in\mathcal{N}$, with $S_T(i)\triangleq S_0(i)$,
    where $H_k(i)=x_{b,k}^{-1}\begin{bsmallmatrix}I_3 & 0_3\end{bsmallmatrix}$ is the state-constraint matrix in $\mathcal{X}=\{\delta\mathbf{x}:\|H_k(i)\delta\mathbf{x}\|\leq 1\}$, $S_k\in\mathbb{M}^N$ and $Y_k(i)\in\mathbb{R}^{n_u\times n_x}$. 
    \end{theorem}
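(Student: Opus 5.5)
The plan is to show that any feasible point of the SDP gives a controller $K_k(i)=Y_k(i)S_k^{-1}(i)$ with four properties: mean-square stability, a cost bound by $\beta$, robust invariance of the ellipsoids, and satisfaction of both path constraints. Minimizing $\beta$ over the feasible set then minimizes this certified upper bound. Throughout I would set $P_k(i)=S_k^{-1}(i)$, which is well defined because \cref{MSS_LMI} forces $S_k(i)\succ0$. The $T$-periodic extension $S_T\triangleq S_0$ makes the sequences $T$-periodic, as both \cref{cor:performance_lmi} and \cref{lemma_invariant_sets} require.

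\textbf{Stability and cost.} First I would invoke \cref{cor:performance_lmi}. Feasibility of \cref{MSS_LMI} gives \cref{per_lemma_2} with $M_k(i)=Q(i)+K_k^\top(i)R(i)K_k(i)$. By \cref{lemma_performance_noise}, the closed loop is then MSS and $J\le\beta\,\mathbb{E}[\delta\mathbf{x}_0^\top P_0\delta\mathbf{x}_0]$.

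\textbf{Initial containment and invariance.} Next, by \cref{l3_t1}, every vertex of $\mathcal{D}_0$ satisfies $\delta\mathbf{x}_{0,v}^\top P_0(i)\delta\mathbf{x}_{0,v}\le1$. Since the ellipsoid is convex, $\mathcal{D}_0\subseteq\mathscr{E}_0$. This also gives $\mathbb{E}[\delta\mathbf{x}_0^\top P_0\delta\mathbf{x}_0]\le1$, so the cost bound becomes $J\le\beta$; this is what justifies calling $\beta$ the upper bound being minimized. Then \cref{l3_t2}, holding for all pairs $(i,j)$, gives invariance through \cref{lemma_invariant_sets}. That is, for every mode sequence and every admissible bounded noise realization, $\delta\mathbf{x}_k^\top P_k(r_k)\delta\mathbf{x}_k\le1$ along the trajectory.

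\textbf{Path constraints.} For the state constraint, I would show that the ellipsoid $\{\delta\mathbf{x}:\delta\mathbf{x}^\top P_k(i)\delta\mathbf{x}\le1\}$ lies inside $\mathcal{X}$. Indeed, $\max\|H_k\delta\mathbf{x}\|^2$ over that ellipsoid equals $\lambda_{\max}(H_kS_k(i)H_k^\top)$, and \cref{t5} bounds this by $1$. For the control constraint, Schur complement of \cref{t4} gives $Y_k^\top(i)Y_k(i)\preceq u_{max}^2 S_k(i)$ in the appropriate arrangement, which yields $K_k^\top K_k\preceq u_{max}^2P_k(i)$. Hence $\|\mathbf{u}_k\|^2=\delta\mathbf{x}_k^\top K_k^\top K_k\delta\mathbf{x}_k\le u_{max}^2$ on the ellipsoid. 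Because invariance holds for every realization, both constraints hold with probability one.

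The main obstacle is the control-constraint LMI \cref{t4}. As written, its blocks $u_{max}^2I$ (of size $n_u$) and $Y_k^\top$ (of size $n_x\times n_u$) do not obviously give the needed inequality, so I would check the block ordering and dimensions carefully. The intended form is $\begin{bsmallmatrix} u_{max}^2I & Y_k\\ Y_k^\top & S_k\end{bsmallmatrix}\succeq0$, whose Schur complement is $Y_kS_k^{-1}Y_k^\top\preceq u_{max}^2I$. Using $\|K\delta\mathbf{x}\|^2=\|Y S^{-1/2}(S^{-1/2}\delta\mathbf{x})\|^2$ with $\|S^{-1/2}\delta\mathbf{x}\|\le1$ on the ellipsoid, this bounds $\|\mathbf{u}_k\|^2$ by $\lambda_{\max}(YS^{-1}Y^\top)\le u_{max}^2$. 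A second subtlety is that $\mathscr{E}_k$ is defined as an intersection over all modes $i$, while the state at time $k$ is certified only for the active mode $r_k$. Because \cref{l3_t2} is imposed for every pair $(i,j)$, membership in the $P_{k+1}(j)$-ellipsoid follows for all $j$, which covers the intersection.
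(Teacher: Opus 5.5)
Your proposal is correct and follows essentially the same route as the paper: invariance from \cref{l3_t1} and \cref{l3_t2} via \cref{lemma_invariant_sets}, the control bound from the Schur complement of \cref{t4}, containment of the ellipsoid in $\mathcal{X}$ from \cref{t5}, and MSS with the cost bound from \cref{cor:performance_lmi} and \cref{lemma_performance_noise}. Two small points: your use of \cref{l3_t1} to get $\mathbb{E}[\delta\mathbf{x}_0^\top P_0(r_0)\delta\mathbf{x}_0]\le 1$, and hence $J\le\beta$, is a useful step the paper leaves implicit; your worry about \cref{t4} is unfounded, because the $\ast$ block is $Y_k(i)$ by symmetry, so the matrix as written is already your ``intended form.''
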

\begin{proof}
    We show that each of the desired properties are satisfied by the SDP defined in \Cref{LMI1}:
    
    \emph{Invariance.} Constraints \cref{l3_t1} and \cref{l3_t2} are exactly the conditions of \cref{lemma_invariant_sets}. Hence, with $P_k(i)=S_k(i)^{-1}$, the sequence $\mathscr{E}_k=\{\delta\mathbf{x}:\delta\mathbf{x}^\top P_k(i)\delta\mathbf{x}\leq 1,\;\forall i\in\mathcal{N}\}$ is $T$-periodic RPI in the sense of \cref{def:rpi} and satisfies $\mathcal{D}_0\subseteq\mathscr{E}_0$. Since $\mathbb{P}(\delta\mathbf{x}_0\in\mathcal{D}_0)=1$ and the invariance of \cref{def:rpi} holds for every admissible noise realization, we have
    \begin{align}\label{eq:einvariance_wp1}
        \mathbb{P}(\delta\mathbf{x}_k\in\mathscr{E}_k)=1, \qquad \forall k\in\mathbb{Z}_{\geq0}.
    \end{align}
    We now show that if $\mathscr{E}_k$ is a feasible sequence of RPI sets, it implies satisfaction of the state and control constraints.

    \emph{Control constraint.} Applying the Schur complement to \cref{t4} and using $Y_k(i)=K_k(i)S_k(i)$ gives
    \begin{align}
        u_{max}(i)^2 I - Y_k(i) S_k(i)^{-1} Y_k(i)^\top \succeq 0
        \;\;\Longleftrightarrow\;\; K_k(i)S_k(i)K_k(i)^\top \preceq u_{max}(i)^2 I.
    \end{align}
    Hence, for any $\delta\mathbf{x}_k\in\mathscr{E}_k$, the commanded control $\mathbf{u}_k=K_k(i)\delta\mathbf{x}_k$ satisfies
    \begin{align}
        \|\mathbf{u}_k\|^2=\delta\mathbf{x}_k^\top K_k(i)^\top K_k(i)\delta\mathbf{x}_k
        \leq \max_{\delta\mathbf{x}^\top P_k(i)\delta\mathbf{x}\leq 1}\delta\mathbf{x}^\top K_k(i)^\top K_k(i)\delta\mathbf{x}
        = \lambda_{\max}\!\left(K_k(i)S_k(i)K_k(i)^\top\right)\leq u_{max}(i)^2,
    \end{align}
    so that $\mathbf{u}_k\in\mathcal{U}$ whenever $\delta\mathbf{x}_k\in\mathscr{E}_k$.

    \emph{State constraint.} With the state-constraint set written as $\mathcal{X}=\{\delta\mathbf{x}:\|H_k(i)\delta\mathbf{x}\|\leq 1\}$, from \cref{t5} we have
    \begin{align}
        I - H_k(i)S_k(i)H_k(i)^\top \succeq 0
        \;\;\Longleftrightarrow\;\; H_k(i)^\top H_k(i)\preceq P_k(i),
    \end{align}
    which is precisely the ellipsoid-containment condition $\mathscr{E}_k\subseteq\mathcal{X}$. Thus $\delta\mathbf{x}_k\in\mathscr{E}_k\Rightarrow\delta\mathbf{x}_k\in\mathcal{X}$.

    Combining these two implications with \cref{eq:einvariance_wp1} gives $\mathbb{P}(\delta\mathbf{x}_k\in\mathcal{X})=1$ and $\mathbb{P}(\mathbf{u}_k\in\mathcal{U})=1$ for all $k$, and for all possible mode and WSS noise realizations; that is, the state and control path constraints are satisfied with probability one. Finally, \cref{MSS_LMI} certifies, through \cref{cor:performance_lmi} and \cref{lemma_performance_noise}, mean square stability and the upper bound $\beta$ on the quadratic cost, which completes the proof.
\end{proof}
\section{Numerical Experiments}\label{sec:results}
\subsection{Problem Set-up}
\subsubsection{Stationkeeping Orbit}
We demonstrate the proposed controller on two baseline NRHOs: the Earth-Moon 9:2 southern $L_2$ NRHO baselined for the Gateway mission, and a Saturn-Enceladus $L_1$ NRHO studied for Enceladus orbiter missions~\cite{mackenzieEnceladusOrbilanderMission2021,booneHaloOrbitConfigurations2026}. Initial conditions for both reference periodic orbits are obtained via differential correction. \Cref{tab:orbit_params} lists the parameters, and initial conditions used for each orbit.
\begin{table}[hbt!]
\caption{\label{tab:orbit_params} Baseline NRHO orbit parameters}
\centering
\begin{tabular}{lccc}
\hline
Parameter & Units & Earth-Moon 9:2 NRHO & Saturn-Enceladus $L_1$ NRHO \\\hline
Mass ratio, $\mu$ & ND & $1.215058\times10^{-2}$ & $1.901150\times10^{-7}$ \\
Length unit, $l^*$ & km & 389{,}703 & 238{,}400 \\
Time unit, $t^*$ & days & 4.4327 & 0.2187 \\
Orbital period, $T$ & ND (days) & 1.502088 (6.658) & 2.282109 (0.499) \\
$x_0$ & ND & 1.021337 & 0.997411 \\
$z_0$ & ND & $-0.181621$ & 0.004875 \\
$\dot{y}_0$ & ND & $-0.101760$ & 0.005464 \\\hline
\end{tabular}
\end{table}
\subsubsection{Failure Modeling}
For both orbits, missed thrust events are modeled using the exponential failure/recovery framework of \cref{expfit}, parameterized by $TTF_{90}$ and $TTR_{95}$. \Cref{tab:failure_params} summarizes these simulation parameters. Every sample is initialized in the failure mode, so the controller must recover from an outage that is already in progress at $t=0$. Both impulsive cases use the three-mode backup framework of \Cref{sec:backup} to offer additional contingency TCM opportunities.
\begin{table}[hbt!]
\caption{\label{tab:failure_params} Failure and recovery modeling parameters}
\centering
\begin{tabular}{llccc}
\hline
 & & \multicolumn{2}{c}{Earth-Moon 9:2 NRHO} & Saturn-Enceladus $L_1$ NRHO \\\cline{3-4}
Parameter & Unit & Impulsive & Low-Thrust & Impulsive \\\hline
Nodes per orbit, $n_T$ & --- & 41 & 41 & 51 \\
$TTF_{90}$ & orbits (days) & 4 (26.63) & 4 (26.63) & 4 (2.00) \\
$TTR_{95}$ & orbits (days) & 0.1 (0.666) & 0.1 (0.666) & 0.1 (0.050) \\
Initial mode distribution, $\boldsymbol{\pi}_0$ & --- & $[0,\,1,\,0]$ & $[0,\,1]$ & $[0,\,1,\,0]$
\\\hline
\end{tabular}
\end{table}
\subsubsection{Controller Design Parameters}
The LMI synthesis of \Cref{problem:l_socp} uses a minimum-effort cost, $Q(i)=\mathbf{0}$ and $R(i)=I_3$. The initial dispersion error at apolune is modeled as a uniform distribution over $\mathcal{D}_0$, a cuboid centered at the origin in position and velocity space bounded by $ic_x$ and $ic_v$. The state and control path constraints are enforced through $x_{b,k}$ and $u_{max}$ respectively. \Cref{tab:controller_params} lists all the parameters used in the numerical examples. Note that the backup TCM nodes are placed immediately after each nominal TCM node, and placed consecutively but do not cover the entire arc, which allows for sufficient time gap between two successive TCM burns even in case of MTE recovery. In most space missions, TCM commands are often executed after performing orbit determination (OD,) so each TCM must have sufficient time after the previous TCM to perform OD and prepare for the next burn. By not covering the whole arc with backup TCM nodes, we ensure that no TCMs will be performed in quick succession under any MTE realization. \Cref{fig:tcm_schedule} illustrates the resulting schedule for the Earth-Moon impulsive case, showing the five nominal TCM nodes (shown in blue) per orbit with 5 backup TCM opportunities per arc, shown as green diamonds. A representative scenario is shown in the schematic where the first two TCM burns are executed nominally but a MTE occurs after the second nominal TCM burn, leading to a missed nominal TCM burn. The spacecraft recovers soon after, and is immediately able to execute a backup TCM burn to compensate for the missed TCM burn. In a nominal case with no backup TCM opportunities, the spacecraft would have to wait until the next planned location to execute the nominal TCM, which would have led to increased fuel costs and potentially divergence from the baseline orbit.
\begin{figure}[htbp!]
    \centering
    \resizebox{\linewidth}{!}{%
  \begin{tikzpicture}[
      x=0.34cm, y=1cm,
      font=\normalsize,
      opp nom/.style  ={circle,  draw=nomblue, fill=nomblue!16, line width=0.5pt, inner sep=0pt, minimum size=2.5mm},
      fire nom/.style ={circle,  draw=nomblue!55!black, fill=nomblue, line width=0.5pt, inner sep=0pt, minimum size=2.9mm},
      opp bk/.style   ={diamond, draw=bkgreen, fill=bkgreen!16, line width=0.5pt, inner sep=0pt, minimum size=2.4mm},
      fire bk/.style  ={diamond, draw=bkgreen!55!black, fill=bkgreen, line width=0.5pt, inner sep=0pt, minimum size=2.8mm},
      miss/.style     ={circle,  draw=black!45, fill=black!12, line width=0.5pt, inner sep=0pt, minimum size=3.6mm,
                        font=\scriptsize, text=failred},
      lane/.style     ={line width=0.6pt, black!25},
      hist/.style     ={line width=1.3pt},
      jump/.style     ={line width=0.9pt, black!35},
      lab/.style      ={font=\normalsize, inner sep=1.5pt},
    ]
    \def\YN{2.05}   
    \def\YB{0.25}   
    \def\YF{1.15}   
    \def\YA{-0.35}  

    \foreach \k in {6.5,14.5,22.5,30.5,38.5}{
      \fill[black!6] (\k,-0.10) rectangle (\k+2,2.48);
    }

    \foreach \y in {\YN,\YB,\YF}{ \draw[lane] (0.4,\y) -- (41.6,\y); }
    \node[lab, anchor=east, nomblue!70!black] at (-0.6,\YN) {nominal, $i=1$};
    \node[lab, anchor=east, bkgreen!70!black] at (-0.6,\YB) {backup, $i=3$};
    \node[lab, anchor=east, failred!70!black] at (-0.6,\YF) {failure, $i=2$};

    \draw[hist, nomblue] (0.5,\YN) -- (12.5,\YN);
    \draw[jump] (12.5,\YN) -- (12.5,\YF);
    \draw[hist, failred] (12.5,\YF) -- (19.5,\YF);
    \draw[jump] (19.5,\YF) -- (19.5,\YB);
    \draw[hist, bkgreen] (19.5,\YB) -- (20.5,\YB);
    \draw[jump] (20.5,\YB) -- (20.5,\YN);
    \draw[hist, nomblue] (20.5,\YN) -- (41.5,\YN);

    \foreach \k in {1,9,25,33,41}{ \node[fire nom] at (\k,\YN) {}; }
    \node[miss] at (17,\YN) {$\times$};

    \foreach \k in {2,3,4,5,6,10,11,12,13,14,18,19,21,22,26,27,28,29,30,34,35,36,37,38}{
      \node[opp bk] at (\k,\YB) {};
    }
    \node[fire bk] at (20,\YB) {};

    \draw[decorate, decoration={brace, amplitude=4pt}, black!55, line width=0.6pt]
        (0.6,2.66) -- (6.4,2.66)
        node[midway, above=4pt, lab, black!60] {TCM opportunities: $1$ nominal $+$ $5$ backup};
    \draw[decorate, decoration={brace, amplitude=4pt}, black!55, line width=0.6pt]
        (22.6,2.66) -- (24.4,2.66)
        node[midway, above=4pt, lab, black!60] {OD and burn-planning window};

    \draw[black!55, line width=0.6pt] (0.5,\YA) -- (41.6,\YA);
    \foreach \k in {1,...,41}{ \draw[black!35, line width=0.4pt] (\k,\YA) -- (\k,\YA-0.06); }
    \node[lab, anchor=west, black!70] at (42.0,\YA) {$t$};

    \node[opp nom]                                 (lgA) at (1.4,-1.15) {};
    \node[opp bk,  anchor=west] (lgB) at ([xshift=1.6mm]lgA.east) {};
    \node[lab,     anchor=west] (lgC) at ([xshift=1.8mm]lgB.east) {TCM opportunity};
    \node[fire nom,anchor=west] (lgD) at ([xshift=6mm]  lgC.east) {};
    \node[fire bk, anchor=west] (lgE) at ([xshift=1.6mm]lgD.east) {};
    \node[lab,     anchor=west] (lgF) at ([xshift=1.8mm]lgE.east) {executed burn};
    \node[miss,    anchor=west] (lgG) at ([xshift=6mm]  lgF.east) {$\times$};
    \node[lab,     anchor=west] (lgH) at ([xshift=1.8mm]lgG.east) {missed TCM};
    \coordinate (lgI) at ([xshift=6mm]lgH.east);
    \draw[hist, nomblue] (lgI) -- ++(3mm,0);
    \draw[hist, failred] ([xshift=3mm]lgI) -- ++(3mm,0);
    \draw[hist, bkgreen] ([xshift=6mm]lgI) -- ++(3mm,0);
    \node[lab, anchor=west] at ([xshift=10.8mm]lgI) {mode history $r_k$};
  \end{tikzpicture}%
    }
    \caption{TCM node schedule over one revolution of the Earth-Moon 9:2 NRHO: one lane per mode with
    the TCM opportunities it offers, shaded bands for the coast arcs, and the mode history.}
    \label{fig:tcm_schedule}
  \end{figure}

  In the Saturn Enceladus case, we also taper the state bound ($x_{b,k}$) to ensure the spacecraft is above the Enceladus surface at perilune, while offering less tight bounds near apolune. For a given set of taper parameters $x_b^{a}$ and $x_b^{p}$, the state bound $x_{b,k}$ at time $k$ which is $d_{E,k}$ distance away from Enceladus, can be calculated as:
  \begin{equation}
      x_{b,k}= x_b^{p} + \bigl(x_b^{a} - x_b^{p}\bigr)\cdot\frac{d_{E,k} - d_{E,p}}{d_{E,a} - d_{E,p}}
  \end{equation}
  where $d_{E,p}$ and $d_{E,a}$ are the distances from the perilune and apolune to Enceladus.

The WSS white noise terms $w^{\{x\}}_{k,l}$ and $w^{\{u\}}_{k,l}$ are modeled as symmetric triangular random variables on $[-a,a]$. Their compact support makes the almost-sure bounds of \Cref{lemma_invariant_sets} valid, and since this distribution has variance $a^2/6$, the unit second moment assumed in \cref{MJLS} fixes $\delta_x=\delta_u=a^2=6$. The pointing gain is calibrated from $\theta_{p,\mathrm{rms}}$, the RMS pointing error angle, which for small deflections equals the perpendicular execution error as a fraction of the commanded burn magnitude. The pointing error RMS and max angle are related to $\epsilon_p$ as:
\begin{equation}\label{eq:eps_p}
    \epsilon_p=\frac{\theta_{p,\mathrm{rms}}}{\sqrt{2}},\qquad\qquad \theta_{p,\max}=\epsilon_p\sqrt{3\delta_u}=\theta_{p,\mathrm{rms}}\sqrt{\frac{3\delta_u}{2}},
\end{equation}
where $\theta_{p,\max}$ is the worst case pointing error. For the magnitude error, $\epsilon_m$ represents the RMS relative magnitude error, with worst case relative magnitude error $\epsilon_m\sqrt{\delta_u}$.
\begin{remark}
Enforcing \Cref{lemma_invariant_sets} exactly requires the set invariance constraint to hold for every mode pair, including the failure mode self-transition $(i,j)=(2,2)$, for which $B_k(2)=0$. Composing it over one period requires the uncontrolled dynamics to admit a $T$-periodic RPI sequence of sets, which does not always hold. To address this, we relax the invariance set constraint for $(i,j)=(2,2)$ mode transition as $C_i\succeq-\epsilon I$ where $\epsilon$ is a small positive number chosen by the user and $C_i$ is the LMI invariance condition in \Cref{l3_t2}. Nonetheless, given the low probability of an MTE lasting an entire orbit, the path constraints are still satisfied with a probability close to unity.
\end{remark}

\begin{table}[hbt!]
\caption{\label{tab:controller_params} Controller design parameters}
\centering
\small
\begin{tabular}{lccc}
\hline
 & \multicolumn{2}{c}{Earth-Moon 9:2 NRHO} & Saturn-Enceladus $L_1$ NRHO \\\cline{2-3}
Parameter & Impulsive & Low-Thrust & Impulsive \\\hline
Control bound, $u_{max}$ & 5 m/s & $2\times10^{-5}$ m/s$^2$ & 15 m/s \\
State bound, $x_{b,k}$ (km) & 1000 & 1000 & 2000/30 (tapered) \\
IC position bound, $ic_x$ (km) & 100 & 100 & 50 \\
IC velocity bound, $ic_v$ (km/s) & $10^{-6}$ & $10^{-6}$ & $10^{-6}$ \\
Nominal TCM nodes per orbit & 5 & all nodes (continuous) & 7 \\
Backup TCM nodes per coasting arc & 5 & --- & 2 \\
State-uncertainty step-off (km) & 50/2.5 (tapered) & 50/2.5 (tapered) & 40/2.0 (tapered) \\
Pointing error, $\theta_{p,\mathrm{rms}}$ / $\theta_{p,\max}$ (deg) & 1.0 / 3.0 & 1.0 / 3.0 & 1.0 / 3.0 \\
Magnitude error, $\epsilon_m$ / $\epsilon_m\sqrt{\delta_u}$ (\%) & 1.0 / 2.45 & 1.0 / 2.45 & 1.0 / 2.45 \\
WSS-noise bounds, $\delta_x,\,\delta_u$ & 6, 6 & 6, 6 & 6, 6 \\\hline
\end{tabular}
\end{table}

\subsection{Simulation Results}

For each case, we use \Cref{LMI1} to design control policies, and then evaluate them through nonlinear Monte Carlo simulations. In each simulation, the initial dispersion error is drawn uniformly from the cuboid $\mathcal{D}_0$ defined by $ic_x$ and $ic_v$, and mode switching follows the transition probability matrix of \cref{tpm_formula} for the low-thrust case and \cref{tpm_formula_backup} for the two impulsive cases. The executed control includes the multiplicative pointing and magnitude errors of \Cref{tab:controller_params} and is saturated at $u_{max}$. The WSS multiplicative noise scalars for MEE are drawn from the symmetric triangular distribution described above. We now discuss the results of the three cases below: 

\subsubsection{Earth-Moon 9:2 NRHO: Impulsive Stationkeeping}

The impulsive Earth-Moon stationkeeping simulation is performed for 50 days ($\approx 7.5$ revolutions) with five nominal TCM opportunities per revolution and five backup nodes per coasting arc. \Cref{fig:em_imp_3d} shows the resulting Monte Carlo sample trajectories against the reference orbit (with deviations amplified $20$ times for visual clarity), and the corresponding burns are shown in \Cref{fig:em_imp_u}. The Monte Carlo trajectories show a clear trend of stabilizing the spacecraft under multiple MTEs, MEE, linearization error and initial dispersion error. Furthermore, the path constraints are all respected for each sample, as shown in \Cref{fig:em_imp_dev}, which plots the position deviation $\|\delta \mathbf{r}\|$ against the max deviation bound $x_{b,k}$. As expected, the controller stabilizes the spacecraft, and while deviations rise slightly post-perilune passage, there is an overall decreasing trend in deviation from baseline orbit. \Cref{fig:em_imp_state} shows the state deviation history for both position and velocity (on a symmetric-log axis, i.e. logarithmic in magnitude, linear near zero, and sign-preserving), which exhibit similar trends of long-term decreasing behavior towards convergence to the baseline orbit despite multiple sources of uncertainties. 

\begin{figure}[htb!]
\centering
\begin{subfigure}[b]{0.42\textwidth}
\centering
\includegraphics[width=\textwidth]{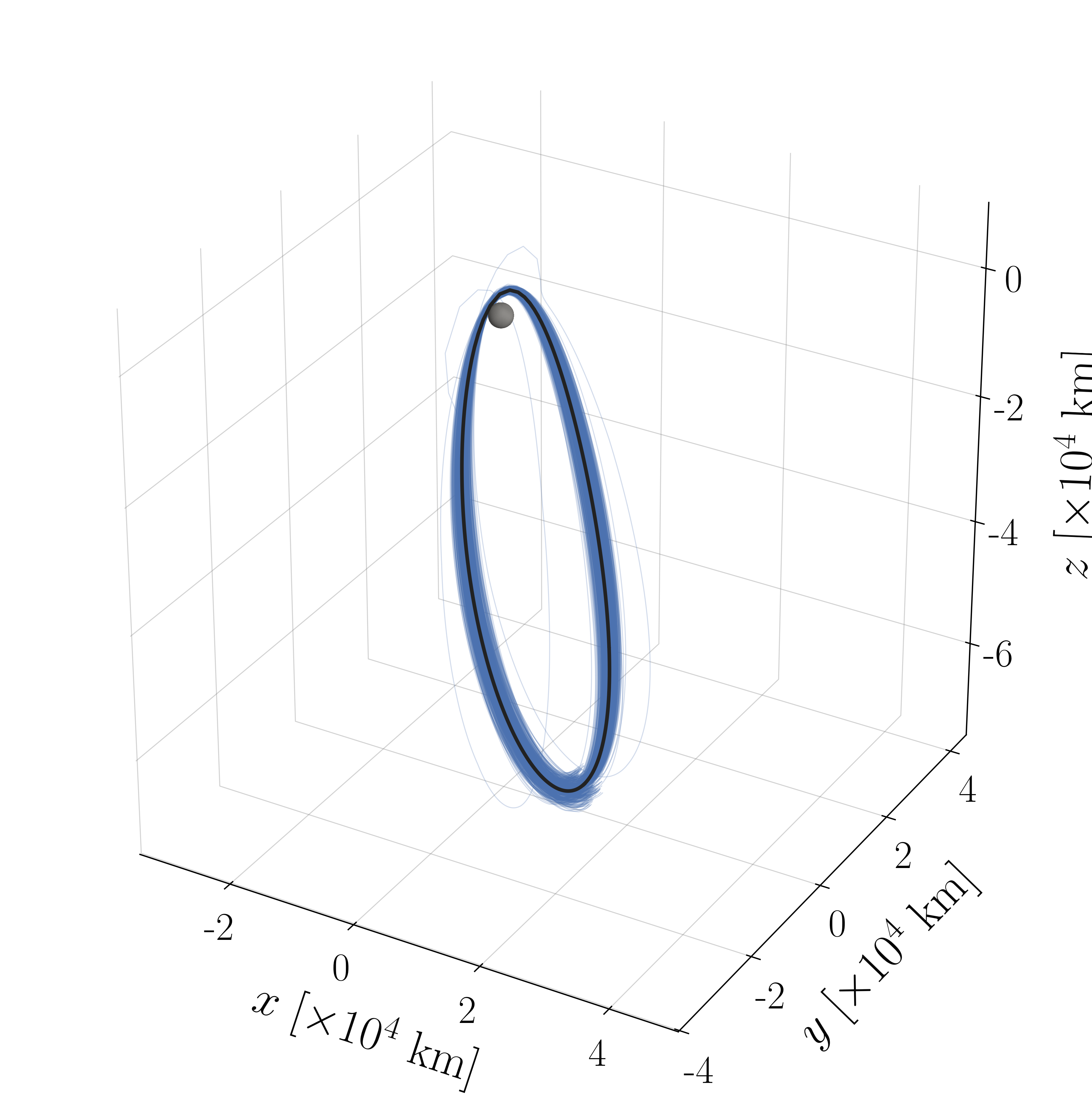}
\caption{Monte Carlo ensemble.}
\label{fig:em_imp_3d}
\end{subfigure}
\hfill
\begin{subfigure}[b]{0.56\textwidth}
\centering
\includegraphics[width=\textwidth]{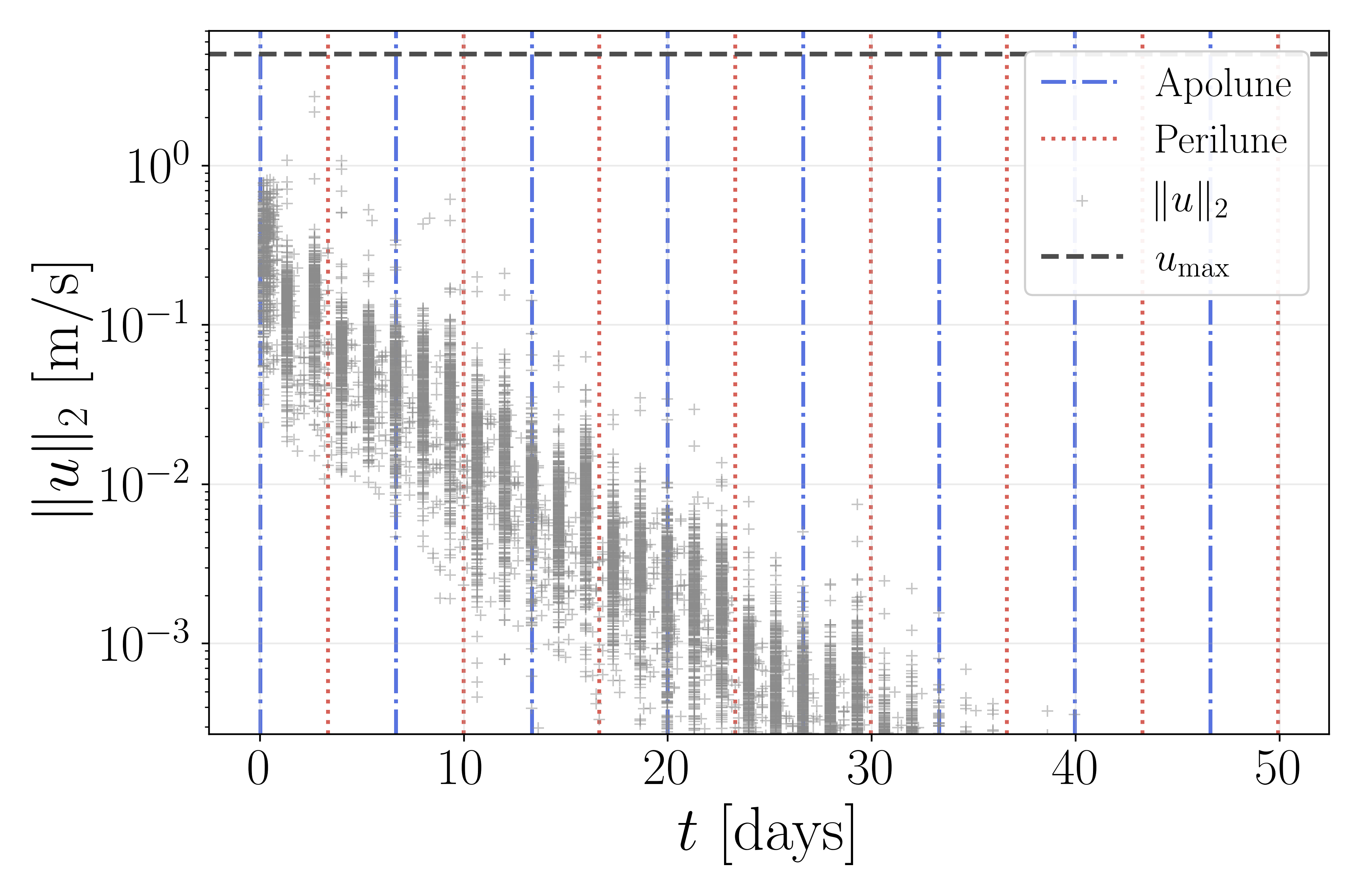}
\caption{Control history.}
\label{fig:em_imp_u}
\end{subfigure}
\caption{Earth-Moon impulsive: (a) 500 Monte Carlo trajectories with deviations magnified $20\times$; (b) the corresponding control histories, with apolune and perilune passages marked and the control bound $u_{max}$ dashed.}
\label{fig:em_imp_ensemble}
\end{figure}

\begin{figure}[htb!]
\centering
\begin{subfigure}[b]{0.46\textwidth}
\centering
\includegraphics[width=\textwidth]{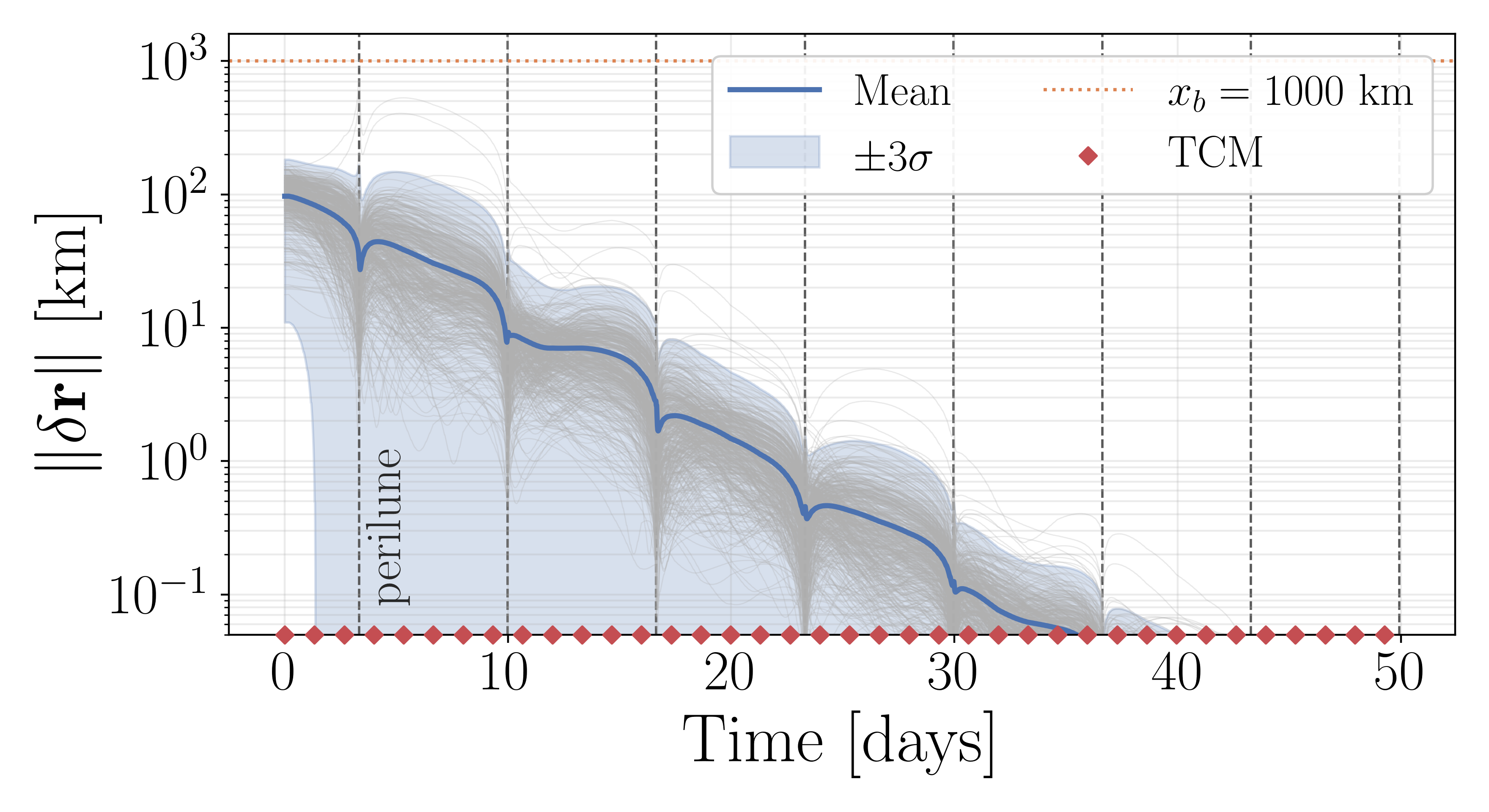}
\caption{Position deviation.}
\label{fig:em_imp_dev}
\end{subfigure}
\hfill
\begin{subfigure}[b]{0.52\textwidth}
\centering
\includegraphics[width=\textwidth]{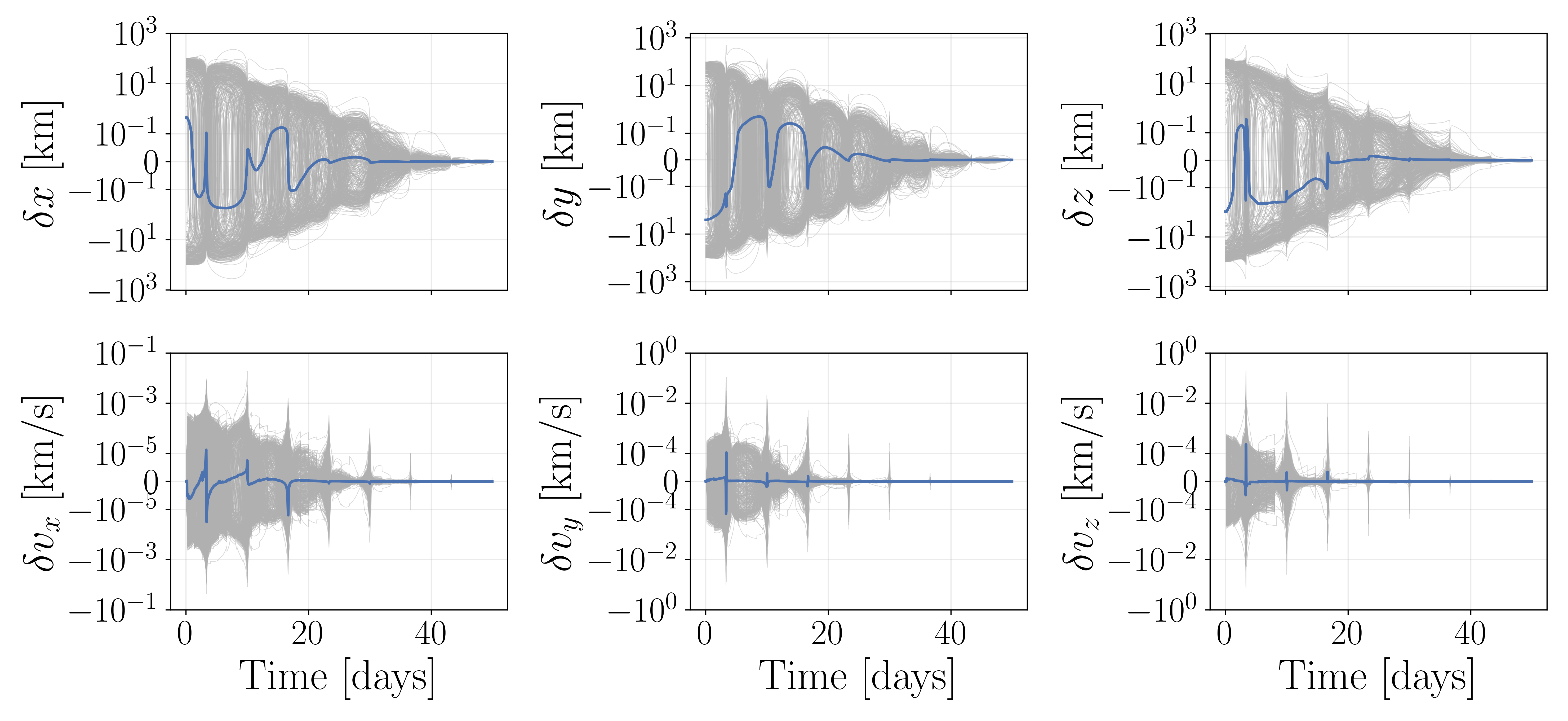}
\caption{State history.}
\label{fig:em_imp_state}
\end{subfigure}
\caption{Earth-Moon impulsive: (a) position deviation of all samples (gray), mean in blue, a shaded $\pm 3\sigma$ band, the state bound $x_{b,k}$, perilune passages, and TCM nodes; (b) per-component deviations, mean in blue.}
\label{fig:em_imp_deviation}
\end{figure}

\subsubsection{Earth-Moon 9:2 NRHO: Low-Thrust Stationkeeping}

Low-thrust engines execute long continuous burns over each arc, which makes them more susceptible to MTEs. In this case, each Monte Carlo sample is simulated for 80 days, which is approximately $12$ orbit revolutions for the Earth-Moon 9:2 NRHO. \Cref{fig:em_lt_3d} shows the Monte Carlo trajectories (with deviations amplified $20$ times), and the corresponding low thrust control history is shown in \Cref{fig:em_lt_u}. Similar to the impulsive case, we also plot the position deviation with the state bound $x_{b,k}$ in \cref{fig:em_lt_dev} and the corresponding state histories are plotted in \cref{fig:em_lt_state} (on a symmetric-log axis). We note the similar trend of increased deviations post perilune passage, and the long-term monotonic trend towards convergence, despite multiple sources of uncertainty. 

\begin{figure}[htb!]
\centering
\begin{subfigure}[b]{0.42\textwidth}
\centering
\includegraphics[width=\textwidth]{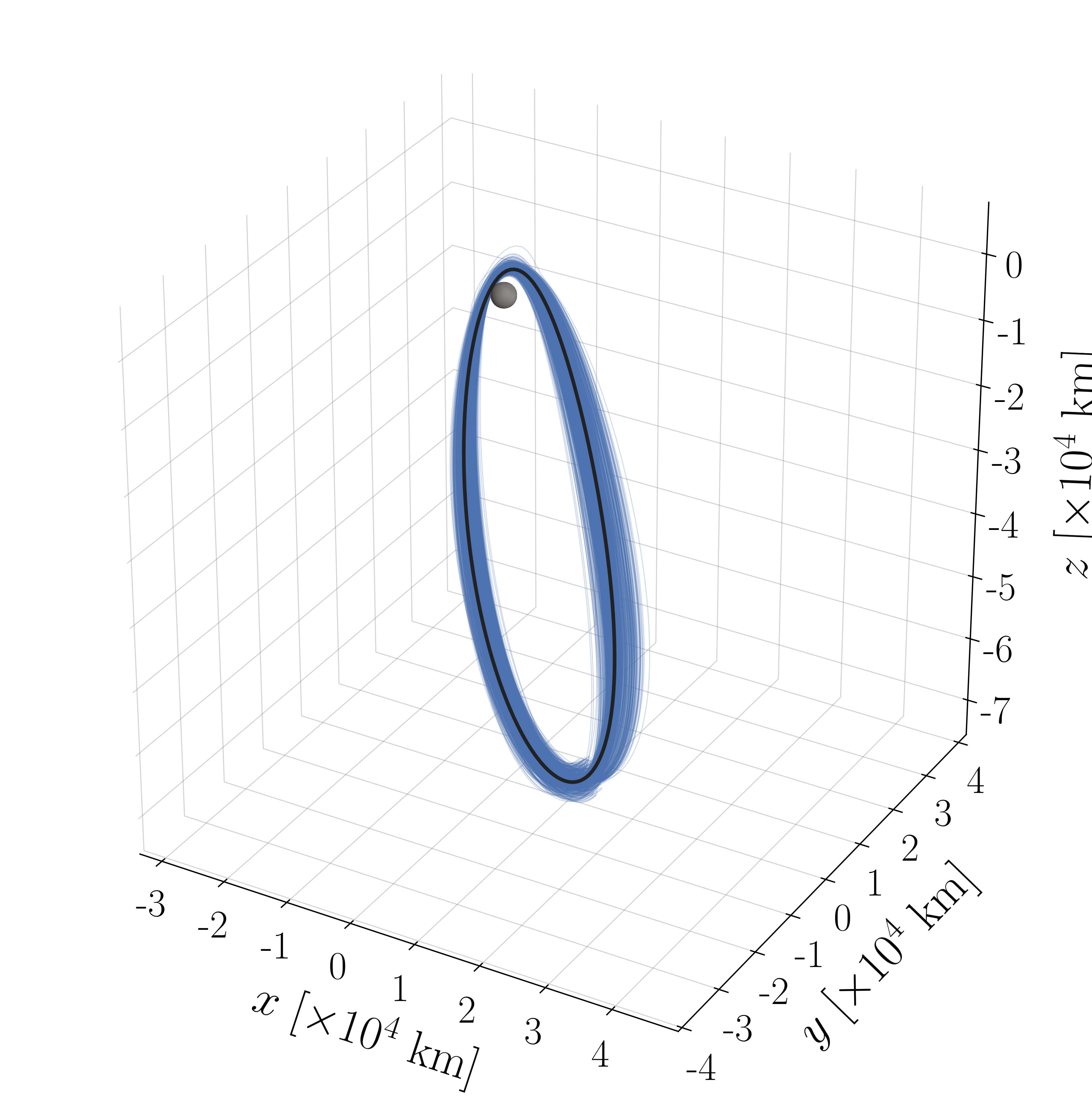}
\caption{Monte Carlo ensemble.}
\label{fig:em_lt_3d}
\end{subfigure}
\hfill
\begin{subfigure}[b]{0.56\textwidth}
\centering
\includegraphics[width=\textwidth]{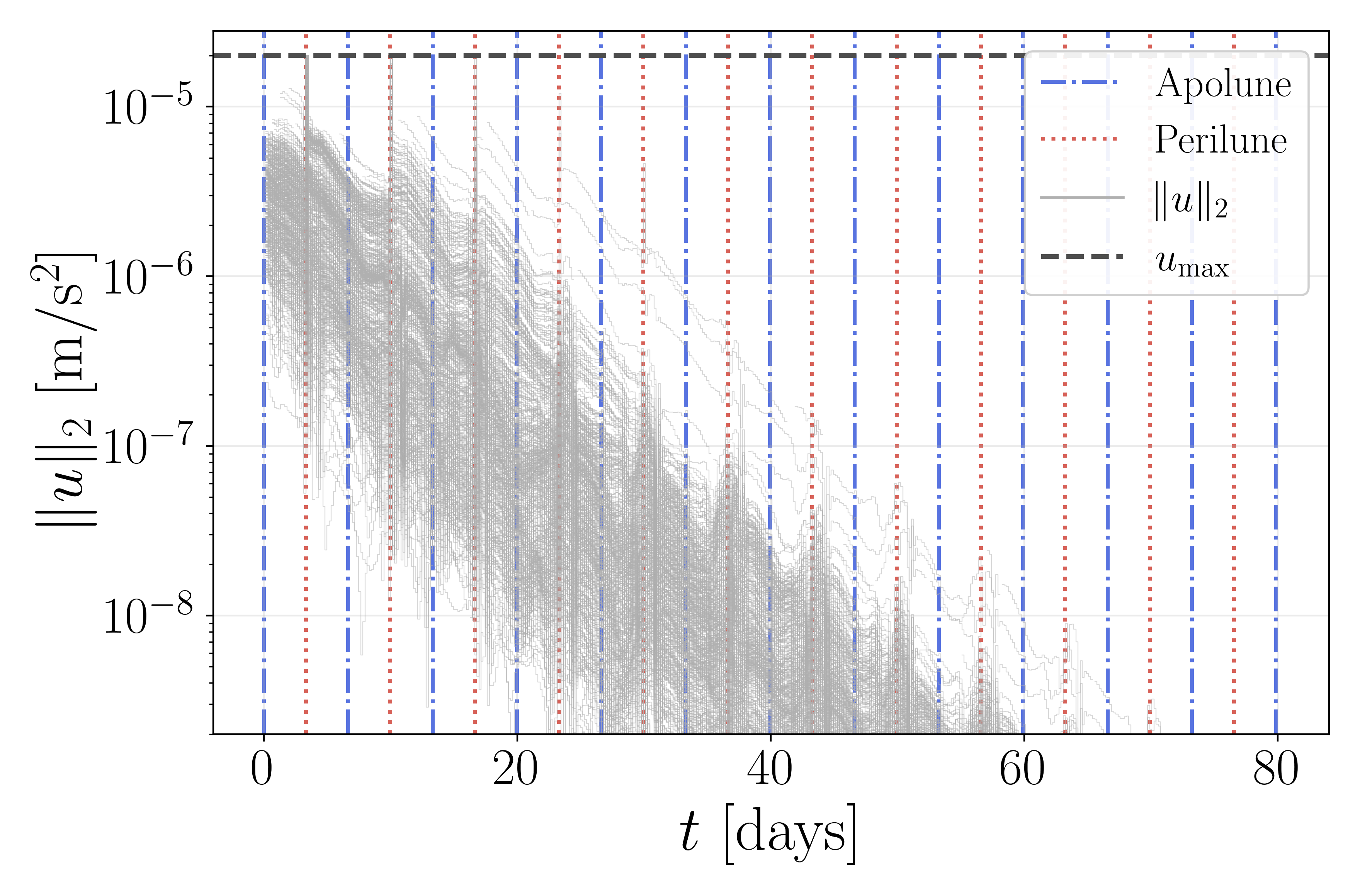}
\caption{Control history.}
\label{fig:em_lt_u}
\end{subfigure}
\caption{Earth-Moon low-thrust: (a) 500 Monte Carlo trajectories with deviations magnified $20\times$; (b) the corresponding control histories, with apolune and perilune passages marked and the control bound $u_{max}$ dashed.}
\label{fig:em_lt_ensemble}
\end{figure}

\begin{figure}[htb!]
\centering
\begin{subfigure}[b]{0.46\textwidth}
\centering
\includegraphics[width=\textwidth]{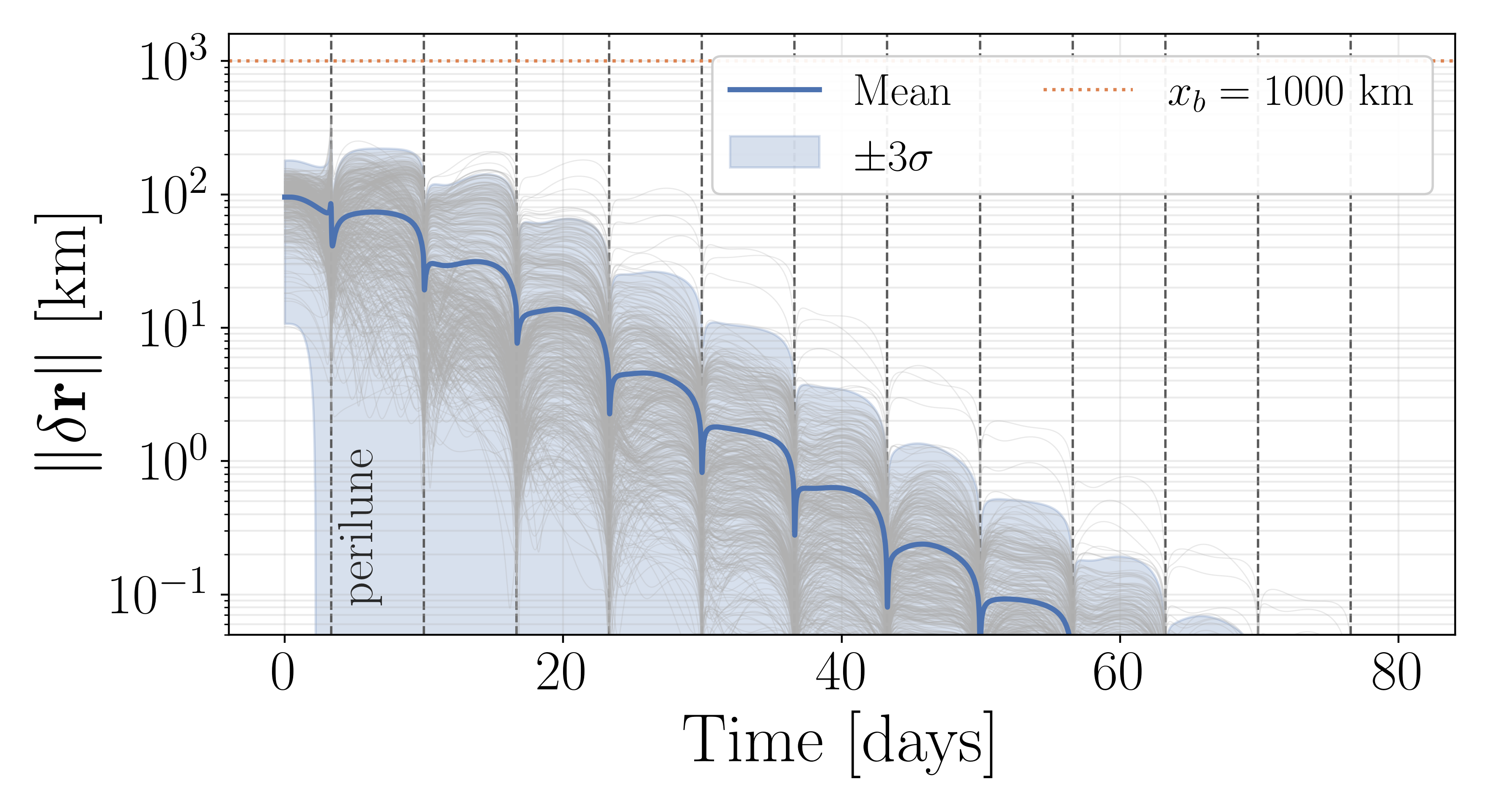}
\caption{Position deviation.}
\label{fig:em_lt_dev}
\end{subfigure}
\hfill
\begin{subfigure}[b]{0.52\textwidth}
\centering
\includegraphics[width=\textwidth]{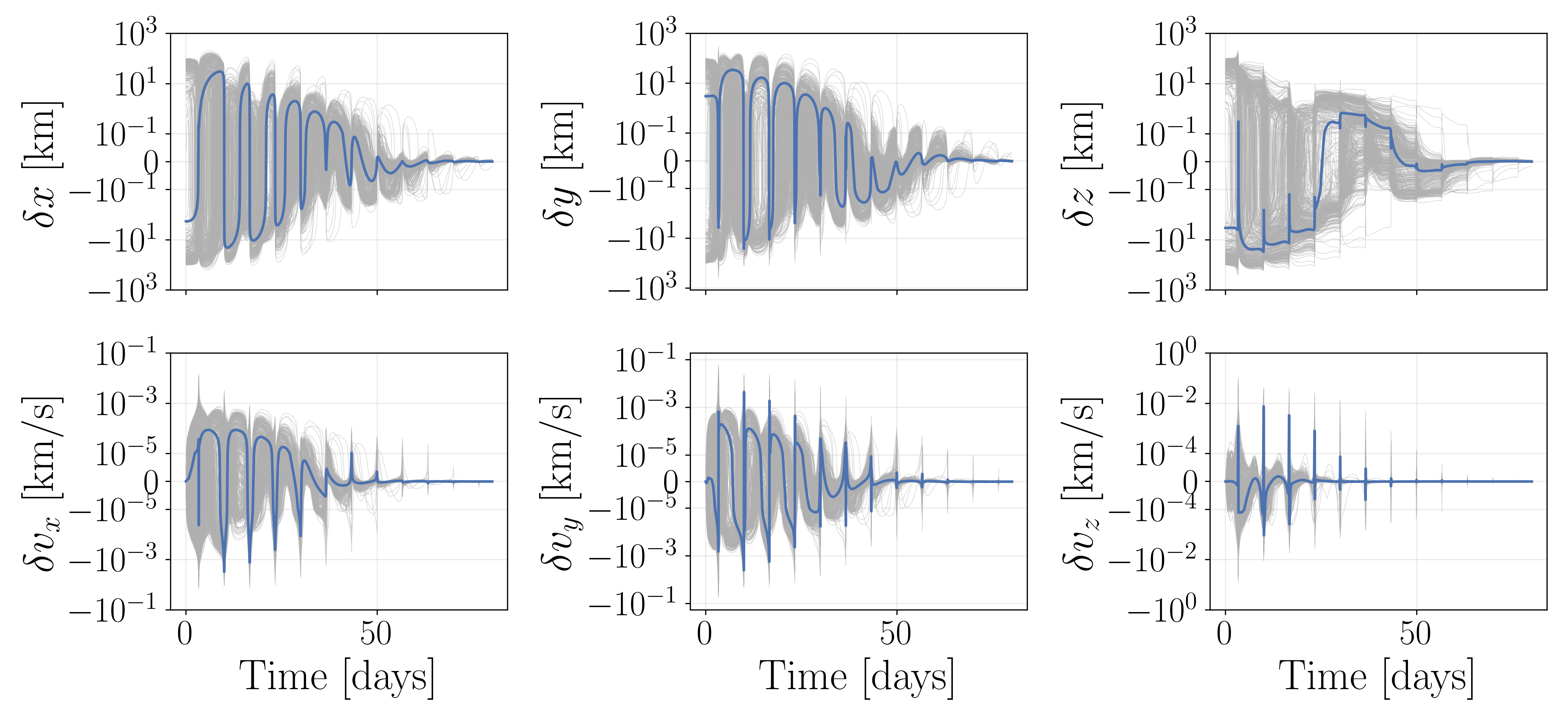}
\caption{State history.}
\label{fig:em_lt_state}
\end{subfigure}
\caption{Earth-Moon low-thrust: (a) position deviation of all samples (gray) with the mean in blue, a shaded $\pm 3\sigma$ band, the state bound $x_{b,k}$ dashed, and perilune passages; (b) per-component deviations, mean in blue.}
\label{fig:em_lt_deviation}
\end{figure}

In \Cref{fig:em_lt_worst,fig:em_lt_usample}, we isolate a single sample to study how the MTEs affect the nominal stationkeeping operations in this case. We focus on the first 30 days of the stationkeeping campaign, and it shows that despite five MTEs, the spacecraft is able to converge back to nominal operations. The two consecutive MTEs at the beginning cause a spike in the control effort, which then decreases gradually as the spacecraft converges to the baseline orbit. 

\begin{figure}[htb!]
\centering
\begin{subfigure}[b]{0.42\textwidth}
\centering
\includegraphics[width=\textwidth]{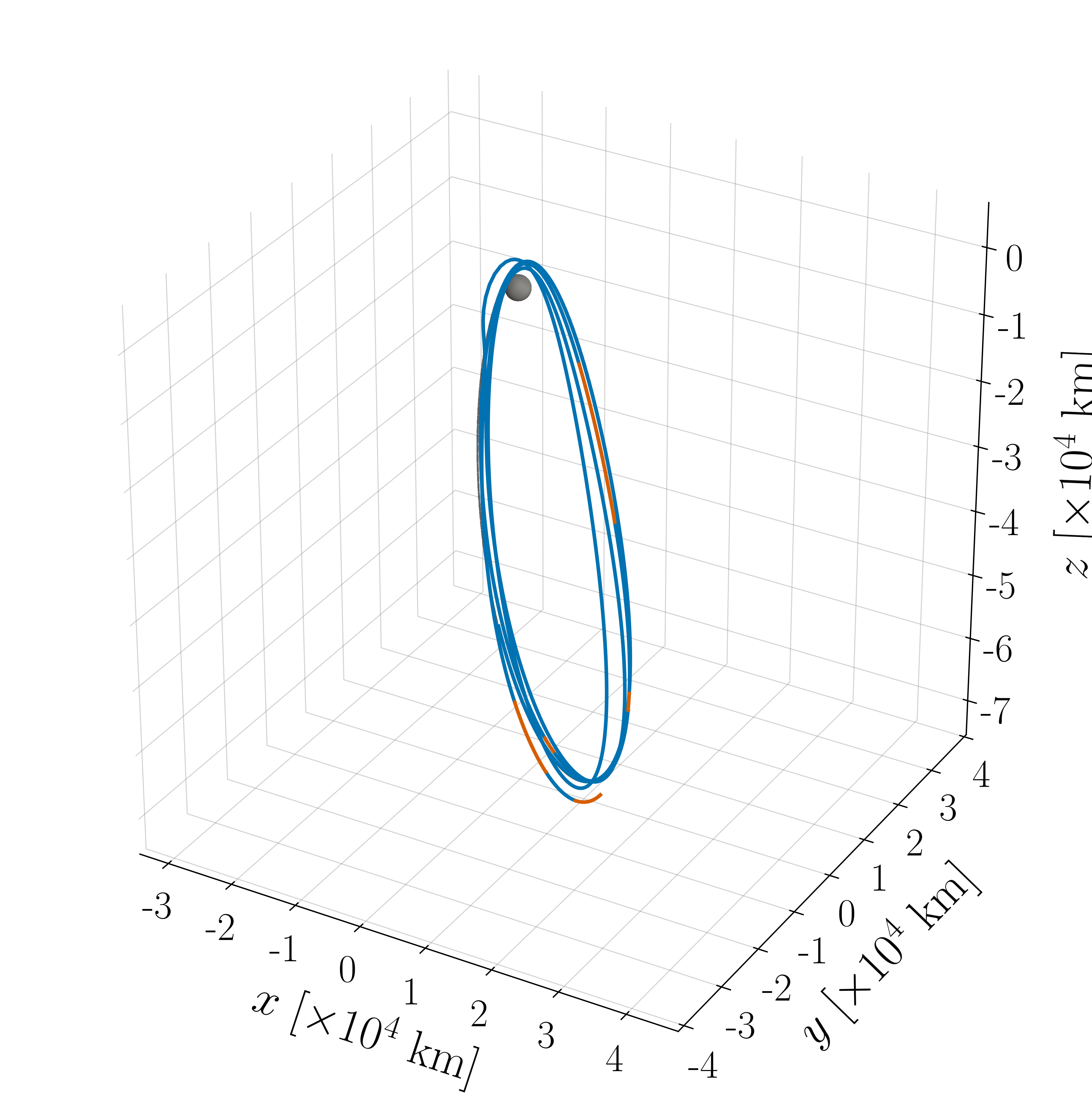}
\caption{Sample trajectory.}
\label{fig:em_lt_worst}
\end{subfigure}
\hfill
\begin{subfigure}[b]{0.56\textwidth}
\centering
\includegraphics[width=\textwidth]{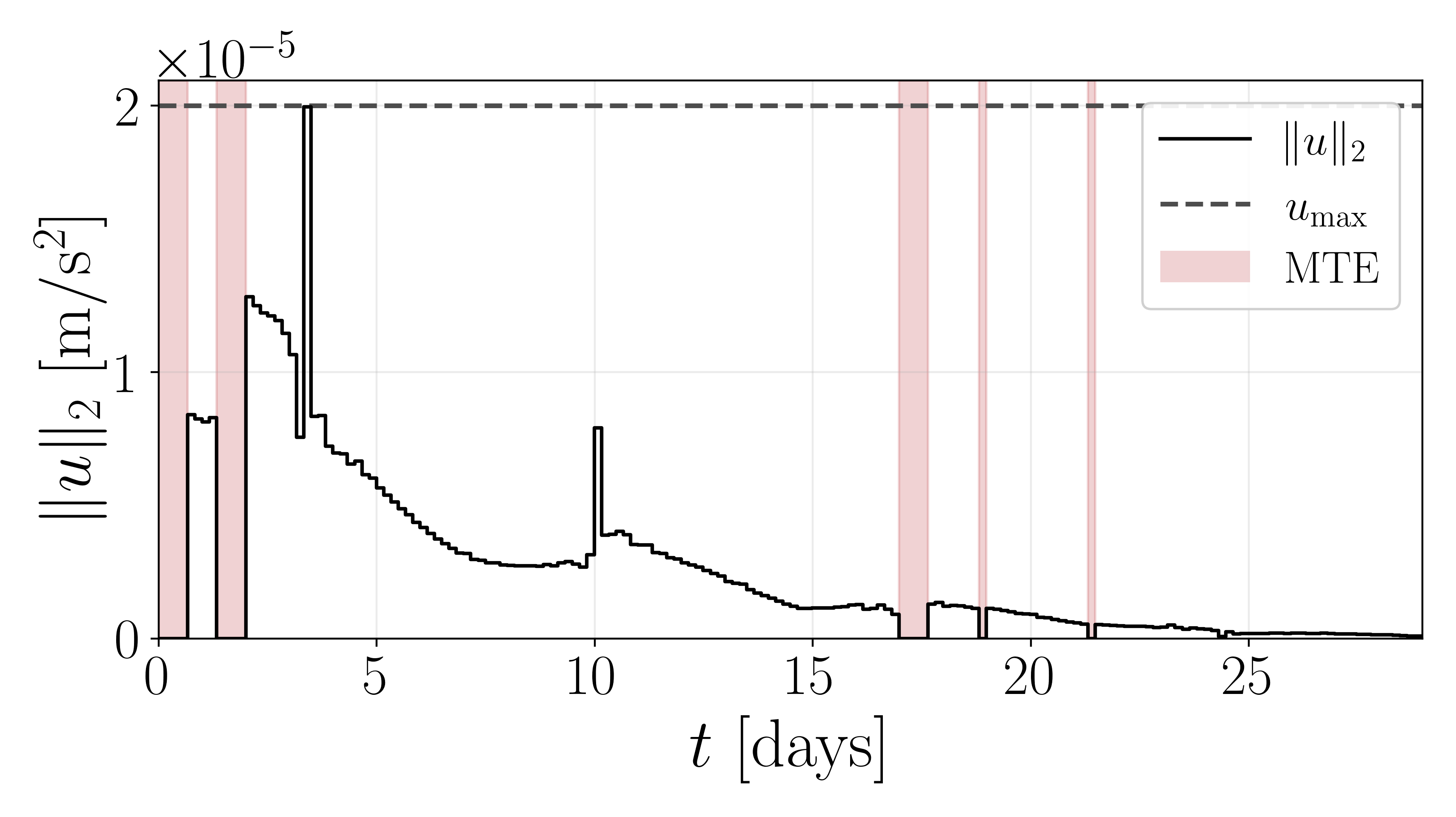}
\caption{Sample thrust profile.}
\label{fig:em_lt_usample}
\end{subfigure}
\caption{Earth-Moon low-thrust: (a) a single sample colored by mode, nominal (blue) and missed thrust (orange), with deviations magnified $20\times$; (b) its control history, with missed-thrust intervals shaded.}
\label{fig:em_lt_sample}
\end{figure}

\subsubsection{Saturn-Enceladus $L_1$ NRHO: Impulsive Stationkeeping}

The Saturn-Enceladus case is the most constrained: the orbit period is under 12 hours, each periapsis passage brings the spacecraft within a few hundred kilometers of the Enceladus surface. We taper the state bound accordingly to protect against a collision with the Enceladus surface under uncertainty. We perform the stationkeeping Monte Carlo simulations for 10 revolutions ($\approx 5$ days) with seven nominal TCM opportunities per revolution and two backup nodes per arc. \Cref{fig:se_imp_3d} shows the corresponding Monte Carlo sample trajectories, with deviations amplified $2$ times for visual clarity, and \Cref{fig:se_imp_u} gives the corresponding burn magnitudes. The simulations show good convergence to the baseline orbit despite multiple MTEs and other uncertainties. 

\begin{figure}[htb!]
\centering
\begin{subfigure}[b]{0.42\textwidth}
\centering
\includegraphics[width=\textwidth]{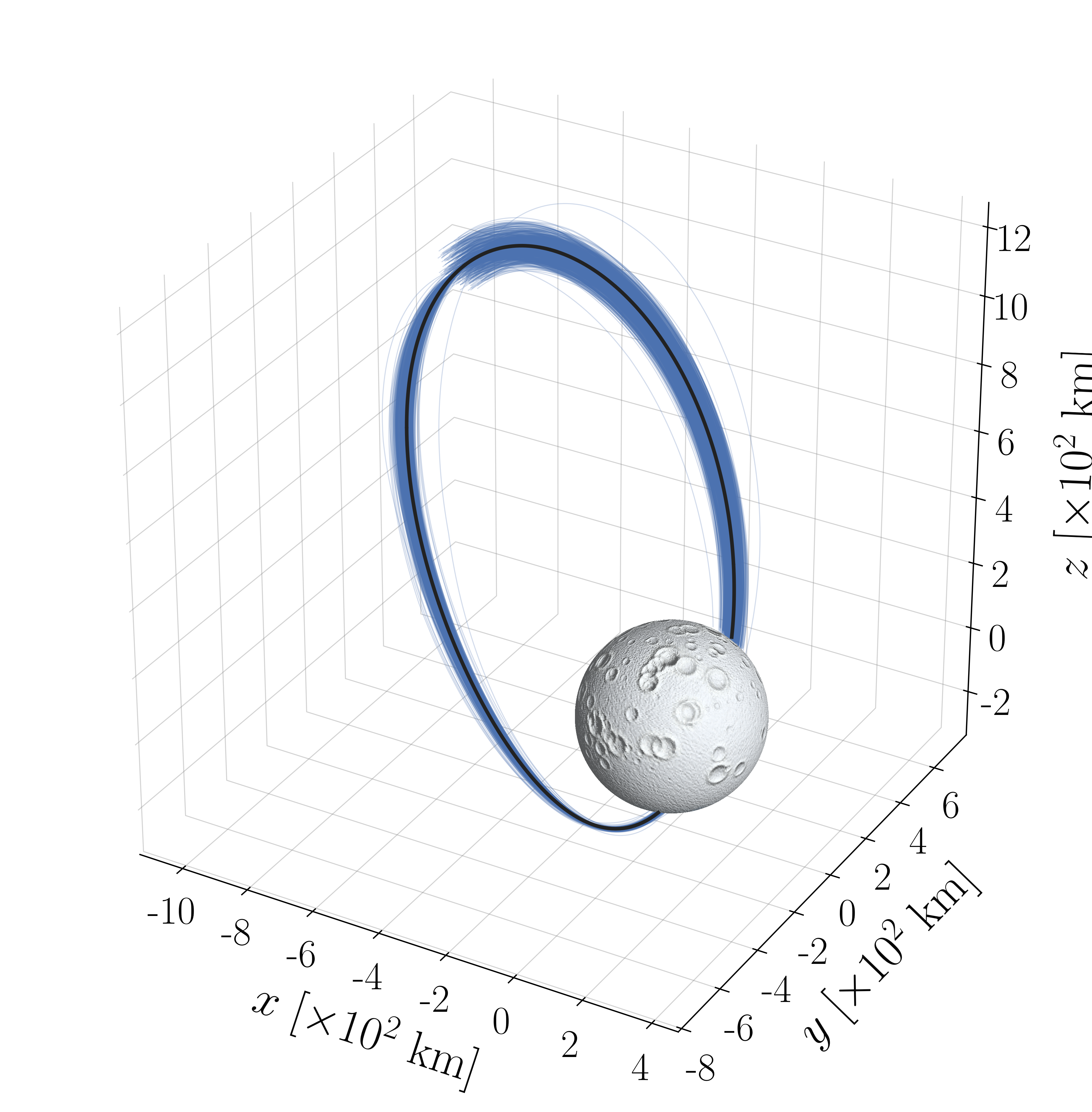}
\caption{Monte Carlo ensemble.}
\label{fig:se_imp_3d}
\end{subfigure}
\hfill
\begin{subfigure}[b]{0.56\textwidth}
\centering
\includegraphics[width=\textwidth]{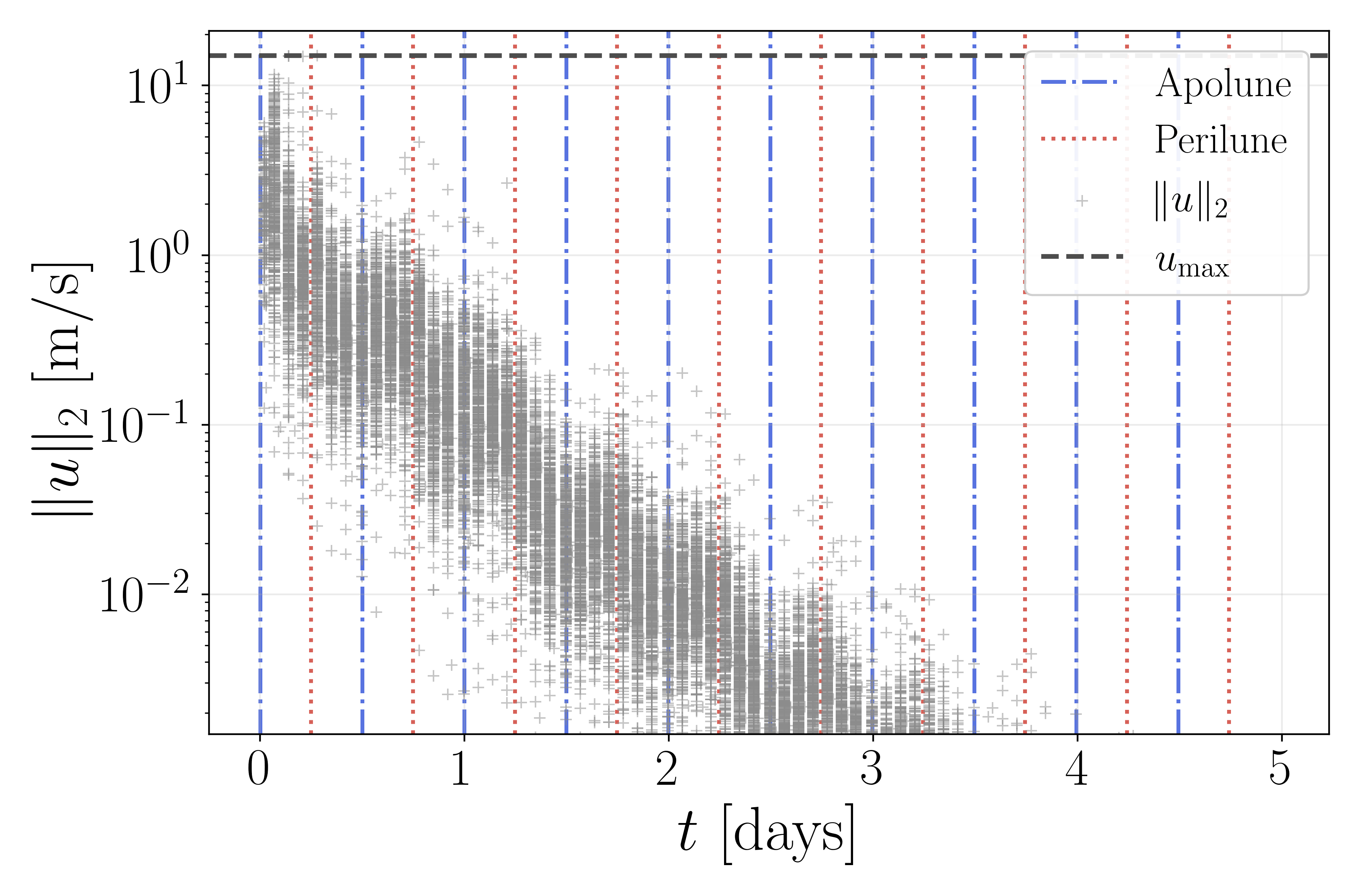}
\caption{Control history.}
\label{fig:se_imp_u}
\end{subfigure}
\caption{Saturn-Enceladus impulsive: (a) 500 Monte Carlo trajectories about the $L_1$ NRHO with deviations magnified $2\times$; (b) burn magnitudes, with apoapsis and periapsis passages marked and $u_{max}$ dashed.}
\label{fig:se_imp_ensemble}
\end{figure}

\Cref{fig:se_imp_dist} reports the absolute distance to the Enceladus center $d_E$, with the periapsis inset confirming that every sample retains a positive surface clearance; the closest approach over all 500 samples is 262.9 km from the Enceladus center, i.e. 10.8 km above the surface. \Cref{fig:se_imp_state} gives the per-component deviations (on a symmetric-log axis). We also isolate a single sample to study how the robust policy responds to multiple MTEs; the resulting trajectory and burn history are shown in \Cref{fig:se_imp_worst,fig:se_imp_usample}. In this sample, the spacecraft faces a long MTE at the beginning of the simulation, and executes a burn soon after recovery through the backup node framework. Successive TCM burns further stabilize the orbit, until a second MTE occurs, followed by a quick recovery. This causes an increase in TCM burn magnitude right after, but decreases as the spacecraft stabilizes to the baseline orbit.

\begin{figure}[htb!]
\centering
\begin{subfigure}[b]{0.46\textwidth}
\centering
\includegraphics[width=\textwidth]{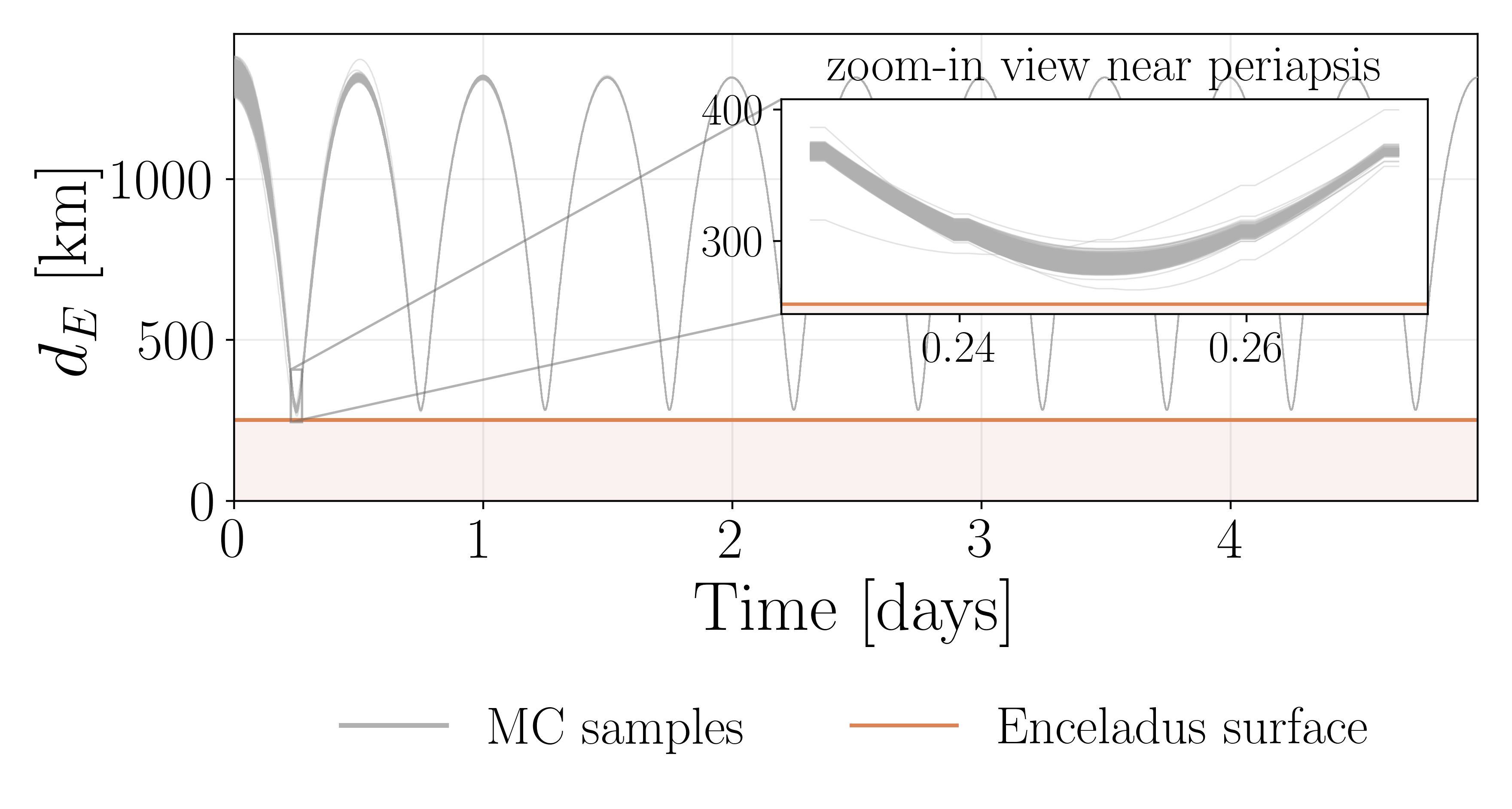}
\caption{Distance to Enceladus.}
\label{fig:se_imp_dist}
\end{subfigure}
\hfill
\begin{subfigure}[b]{0.52\textwidth}
\centering
\includegraphics[width=\textwidth]{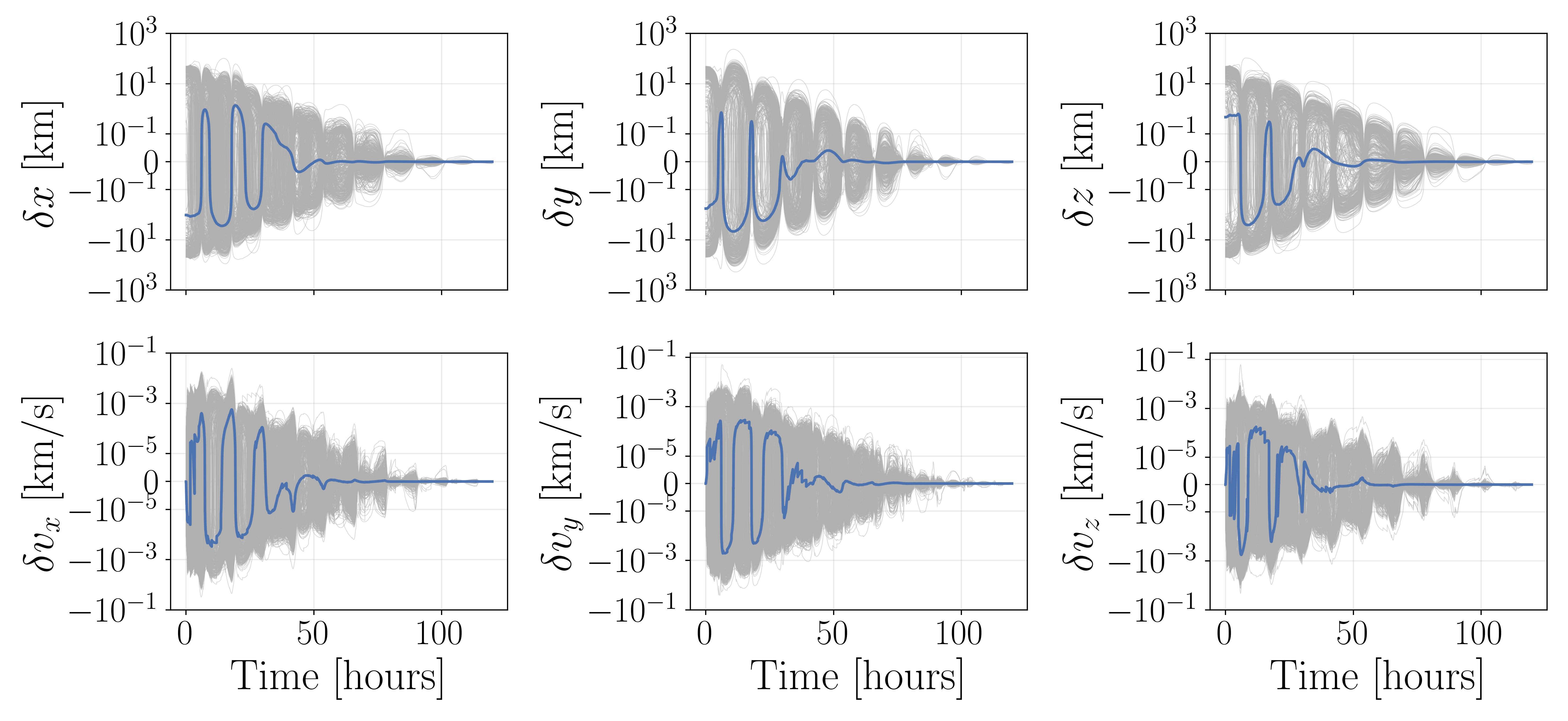}
\caption{State history.}
\label{fig:se_imp_state}
\end{subfigure}
\caption{Saturn-Enceladus impulsive: (a) distance from the Enceladus center for every sample, with the surface drawn and a zoom on the tightest periapsis passage; (b) per-component deviations, mean in blue.}
\label{fig:se_imp_deviation}
\end{figure}

\begin{figure}[htb!]
\centering
\begin{subfigure}[b]{0.42\textwidth}
\centering
\includegraphics[width=\textwidth]{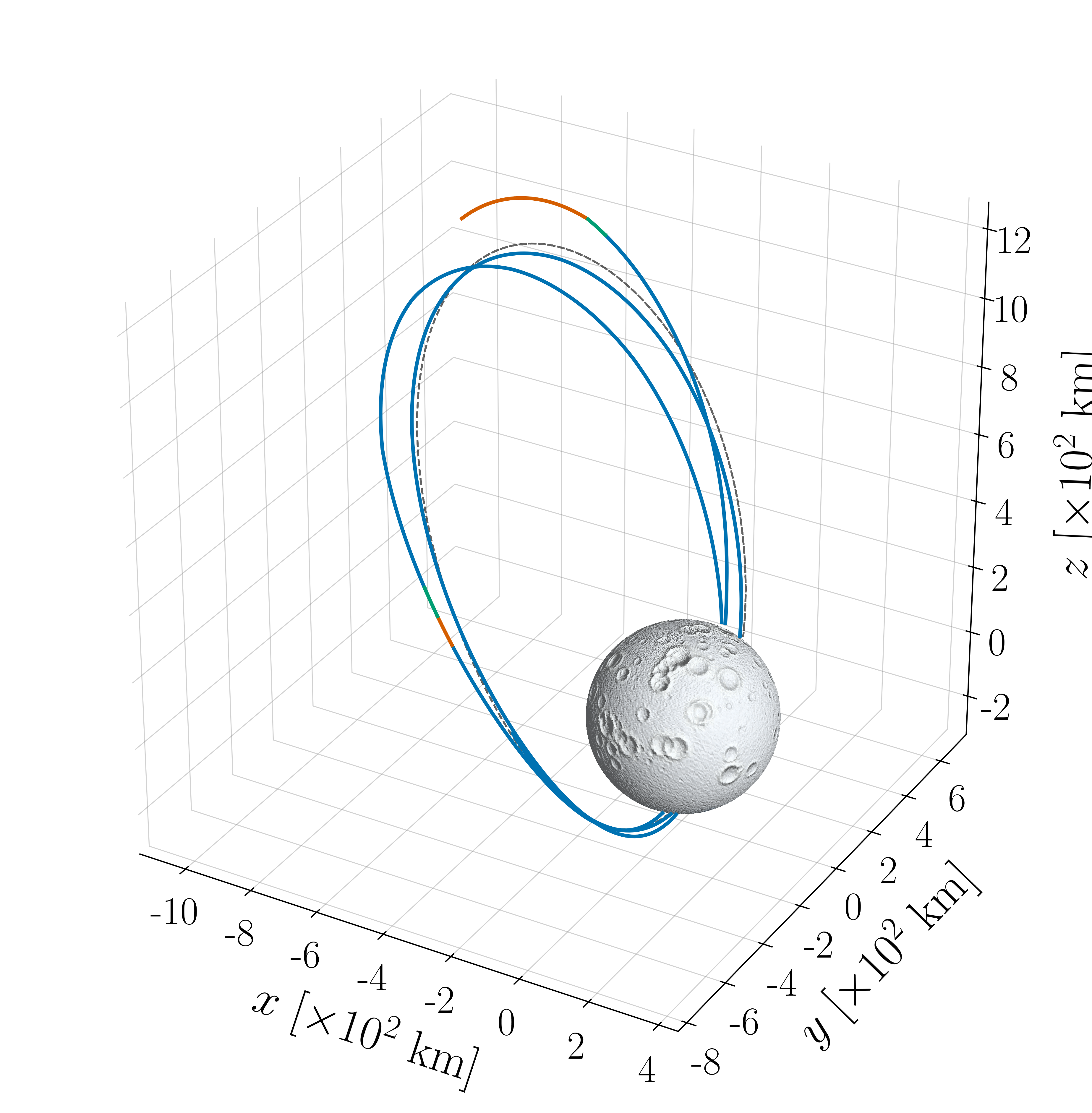}
\caption{Sample trajectory.}
\label{fig:se_imp_worst}
\end{subfigure}
\hfill
\begin{subfigure}[b]{0.56\textwidth}
\centering
\includegraphics[width=\textwidth]{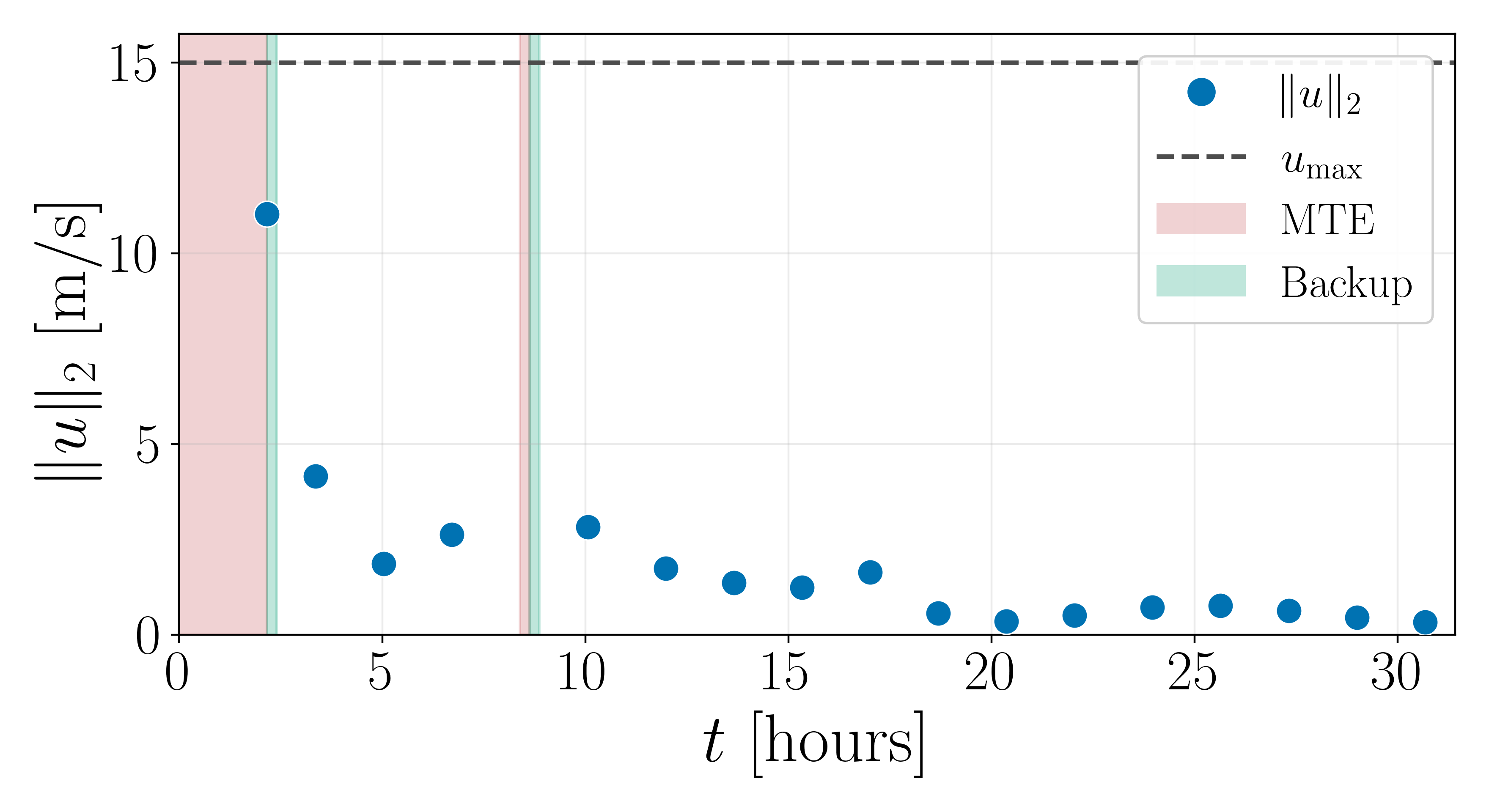}
\caption{Sample burn magnitudes.}
\label{fig:se_imp_usample}
\end{subfigure}
\caption{Saturn-Enceladus impulsive: (a) a single sample colored by mode: nominal (blue), missed thrust (orange), and backup (green), with deviations magnified $2\times$; (b) its burn magnitudes, intervals shaded.}
\label{fig:se_imp_sample}
\end{figure}

\subsubsection{Sensitivity Analysis to MTE Probability}

The results above use a single pair of failure statistics per case. To assess how strongly the closed-loop behavior depends on those parameters, each controller is re-synthesized and re-evaluated over a grid of $TTF_{90} \in \{2,\,4,\,8\}$ orbits and $TTR_{95} \in \{0.1,\,0.2,\,0.5\}$ orbits, with every other design parameter held fixed. \Cref{tab:ttf_ttr_sweep} reports, for each pair, the fraction of the 500 samples that both converge and satisfy the state bound $x_{b,k}$ at every node, together with the $99$th-percentile stationkeeping cost $\Delta v_{99}$ of those samples.

\begin{table}[hbt!]
\caption{\label{tab:ttf_ttr_sweep} Sensitivity to the missed-thrust statistics over 500 Monte Carlo samples: $\Delta v_{99}$ is the 99th-percentile cost for the successful cases, success rate the fraction that converge and satisfy $x_{b,k}$; a dash marks no solution.}
\centering
\small
\begin{tabular}{llccc}
\hline
 & & \multicolumn{3}{c}{$\Delta v_{99}$ in m/s (success rate \%)} \\\cline{3-5}
Case & \diagbox[width=2.4cm,height=1.0cm]{$TTF_{90}$}{$TTR_{95}$} & 0.1 orbits & 0.2 orbits & 0.5 orbits \\\hline
Earth-Moon impulsive       & 2 orbits & 2.24 (100.0)  & 3.61 (100.0) & 5.53 (99.2) \\
                           & 4 orbits & 2.22 (100.0)  & 3.07 (100.0) & 5.96 (99.2) \\
                           & 8 orbits & 1.74 (100.0)  & 3.02 (99.8)  & ---         \\\hline
Earth-Moon low-thrust      & 2 orbits & 8.89 (100.0)  & ---          & ---         \\
                           & 4 orbits & 8.07 (100.0)  & ---          & ---         \\
                           & 8 orbits & 6.37 (100.0)  & ---          & ---         \\\hline
Saturn-Enceladus impulsive & 2 orbits & 30.50 (99.4)  & ---          & ---         \\
                           & 4 orbits & 29.43 (99.8)  & 54.55 (98.2) & ---         \\
                           & 8 orbits & 27.50 (99.0)  & 36.83 (98.4) & ---         \\\hline
\end{tabular}
\end{table}

The results show a clear monotonic trend across both $TTF_{90}$ and $TTR_{95}$ for $\Delta v_{99}$. For a given $TTR_{95}$ say $0.1$ orbits, the total stationkeeping costs decrease as we increase the $TTF_{90}$. This is expected, as increasing it implies that the missed thrust events are less likely, or more spread out in time, which results in closer to nominal operations. As we decrease the $TTF_{90}$ parameter, the $\Delta v_{99}$ rises, which is natural to ensure robust performance against more frequent MTEs. This trend is common to all three cases we consider in this study. For the second parameter $TTR_{95}$, we note a clear trend of increase in $\Delta v_{99}$ costs as we increase the recovery time. Since increasing $TTR_{95}$ leads to longer MTEs, it is natural to see increased fuel costs which would be needed post recovery to counter the uncontrolled deviations caused by longer MTEs. We note that while this monotonic relationship between $TTR_{95}$ and $\Delta v_{99}$ is seen in both the impulsive cases, the problem does not have a feasible solution for the low-thrust case due to the smaller available thrust magnitude. This indicates that for the given parameters, the Earth-Moon low-thrust stationkeeping scenario sits on the edge of feasibility with respect to the MTE parameters, and greater control authority or relaxed robustness parameters for other uncertainties might be desired to guarantee MTE resilience with longer recovery times. The Saturn-Enceladus case yields an infeasible SDP in some cases, especially for higher $TTR_
{95}$ values. Similarly, this indicates the need for greater control authority or relaxed robustness needs to guarantee MTE resilience for higher $TTR_{95}$ cases. Overall, the results also show that MTE recovery times are more critical than MTE occurrence probability, and guaranteeing robustness against longer MTEs is more difficult than guaranteeing robustness against more frequent (but shorter) MTEs. 

On the other hand, the success rate for Monte Carlo simulations are largely unaffected by the change in these parameters. In the Earth-Moon impulsive case, increasing the recovery times does decrease the success rate slightly, but it remains close to $100\%$ for against different MTE occurrence probabilities. A similar trends is observed against $TTF_{90}$ for the low-thrust case too, where the success rate is still $100\%$, but the controller synthesis fails to find a solution for higher $TTR_{95}$ values in this case. For Saturn-Enceladus, the success rate does not show a clear trend against $TTF_{90}$, but it is still $\geq99\%$, and there are not enough converged results to make a conclusive claim on success rate dependence against $TTR_{95}$. Overall, success rate in all cases seems to be close to $100\%$, which implies that if the framework finds a solution, it also performs well in the Monte Carlo simulations.
 
\begin{table}[htbp!]
\caption{\label{tab:backup_nodes} Sensitivity of the Earth-Moon impulsive case to the backup TCM node layout, over 500 Monte Carlo samples. Placement is the offset $+j$ from the preceding nominal TCM node within a seven-node coasting arc.}
\centering
\small
\begin{tabular}{cccc}
\hline
Backup nodes & Placement & Success rate & $\Delta v_{99}$ \\
per coasting arc & within arc & (\%) & (m/s) \\\hline
0 & --- & 100.0 & 2.79 \\
2 & $+1,+2$ & 100.0 & 2.69 \\
5 & $+1,\ldots,+5$ & 100.0 & 2.22 \\
5 & $+3,\ldots,+7$ & 100.0 & 3.07 \\
7 & $+1,\ldots,+7$ (full arc) & 100.0 & 1.87 \\\hline
\end{tabular}
\end{table}

We also note that the placement of backup nodes also affect the controller performance. To analyze this, we study the Earth-Moon impulsive case against different backup TCM node placement strategies, and \Cref{tab:backup_nodes} shows the success rate and $\Delta v_{99}$ for different placement strategies. We consider 5 different placement strategies: (i) no backup nodes (a two mode architecture,) (ii) Two backup TCM nodes immediately after each nominal TCM node, (iii) Five backup TCM nodes immediately after each nominal TCM node (baseline case,) (iv) Five backup TCM nodes, but placed immediately before the next nominal TCM node, and (v) Seven backup TCM nodes filling up the whole coasting arc. While the controller has $100\%$ success rate across all backup node architectures, the $\Delta v_{99}$ trends show a strong dependence on the choice of backup node architecture. We note that adding the number of backup nodes decreases fuel costs monotonically, with the lowest $\Delta v_{99}$ case being the 7 backup node case, which fills up the entire coasting arc. However, this last case is not practical, as it can lead to two consecutive TCM burns under some MTE sequences, leading to almost no time for the spacecraft operators to perform orbit determination. Therefore, the baseline case, with 5 backup TCM nodes offers the best performance trading off practical requirements with reduced fuel costs. The placement of the backup nodes also has a very strong effect on the fuel costs, as shown by case (iv) where the five backup TCM nodes are placed at the end of the coasting arc instead of the beginning. While this is also not practical as explained before because it can lead to consecutive TCM burns under some MTE sequences, it emphasizes that the ideal backup node framework is similar to our baseline case: provide a sufficient number of backup TCM opportunities immediately after nominal TCM node while leaving enough time before next nominal TCM burn. Our framework allows mission designers to conduct such trade studies to optimize stationkeeping strategies without the need for introducing conservative heuristics or relying on extensive Monte Carlo simulations.

\section{Conclusion}\label{sec:conclusion}

This work presents a Markov Jump system framework for robust NRHO stationkeeping under statistical missed thrust events, maneuver execution errors, and linearization errors. We model missed thrust events as a Markov chain fitted to exponential distribution-based failure and recovery statistics, and utilize bounded multiplicative noise to model maneuver execution error and linearization errors. We then study the corresponding 
linear time-varying periodic Markov jump system with multiplicative noise and propose an LMI-based controller synthesis framework that respects path constraints with probability one. To enforce path constraints, we introduce the notion of a $T$-periodic robust positively invariant sequence of sets, and combine it with Lyapunov-like stability constraints in the synthesis framework. We demonstrate the robustness of our framework through numerical results on the Earth-Moon 9:2 NRHO and a Saturn-Enceladus $L_1$ NRHO stationkeeping example. For the impulsive propulsion case, we propose a three-mode MJLS structure, which provides contingency maneuver locations that are activated only when an MTE causes a planned burn to be missed. Further parametric study also offers insight into the characteristics of the MTE-resilient solution and its dependence on various problem parameters. Our framework enables robust maneuver planning for stationkeeping that is resilient against an arbitrary number of missed thrust events, maneuver execution errors, and linearization errors through the multiplicative noise-based Markov Jump System framework.

\section*{Acknowledgments}
AI models were used in the paper to polish grammar, and for making schematics and in coding assistance for plot generation. The authors reviewed and edited the content carefully and take full responsibility for the content in this article. 

\bibliography{ref}

@article{shrivastavaRobustControllerSynthesis2024a,
	title = {Robust {Controller} {Synthesis} {Under} {Markovian} {Mode} {Switching} {With} {Periodic} {LTV} {Dynamics}},
	issn = {2475-1456},
	doi = {10.1109/LCSYS.2024.3522212},
	urldate = {2025-01-03},
	journal = {IEEE Control Syst. Lett.},
	author = {Shrivastava, Shaurya and Oguri, Kenshiro},
	year = {2024},
	pages = {1--1},
}

@article{houSpectralTestsObservability2016,
	title = {Spectral tests for observability and detectability of periodic {Markov} jump systems with nonhomogeneous {Markov} chain},
	volume = {63},
	issn = {0005-1098},
	doi = {10.1016/j.automatica.2015.10.004},
	urldate = {2024-09-11},
	journal = {Automatica},
	author = {Hou, Ting and Ma, Hongji and Zhang, Weihai},
	month = jan,
	year = {2016},
	pages = {175--181},
}

@article{bohmStabilityPeriodicallyTimevarying2012,
	title = {Stability of periodically time-varying systems: {Periodic} {Lyapunov} functions},
	volume = {48},
	issn = {0005-1098},
	shorttitle = {Stability of periodically time-varying systems},
	doi = {10.1016/j.automatica.2012.06.070},
	number = {10},
	urldate = {2024-06-04},
	journal = {Automatica},
	author = {Böhm, Christoph and Lazar, Mircea and Allgöwer, Frank},
	month = oct,
	year = {2012},
	pages = {2663--2669},
}

@inproceedings{sekineRobustTrajectoryDesign2026,
	title = {Robust {Trajectory} {Design} for {Earth}-to-lunar {Orbit} {Transfers} via {Sequential} {Convex} {Programming}},
	booktitle = {{AAS} {Guidance}, {Navigation}, and {Control} {Conference}},
	author = {Sekine, Hirotaka and Oguri, Kenshiro and Kawabata, Yosuke and Funase, Ryu},
	month = feb,
	year = {2026},
}

@article{rubinsztejnNeuralNetworkOptimal2020,
	title = {Neural network optimal control in astrodynamics: {Application} to the missed thrust problem},
	volume = {176},
	issn = {0094-5765},
	shorttitle = {Neural network optimal control in astrodynamics},
	doi = {10.1016/j.actaastro.2020.05.027},
	language = {en},
	urldate = {2023-04-07},
	journal = {Acta Astronautica},
	author = {Rubinsztejn, Ari and Sood, Rohan and Laipert, Frank E.},
	month = nov,
	year = {2020},
	pages = {192--203},
}

@incollection{grebowDawnSafingApproach2016,
	series = {{AIAA} {SPACE} {Forum}},
	title = {Dawn {Safing} {Approach} to {Ceres} {Re}-{Design}},
	doi = {10.2514/6.2016-5426},
	urldate = {2025-01-08},
	booktitle = {{AIAA}/{AAS} {Astrodynamics} {Specialist} {Conference}},
	author = {Grebow, Daniel and Whiffen, Gregory J. and Han, Dongsuk and Kennedy, Brian},
	month = sep,
	year = {2016},
}

@incollection{muralidharanControlStrategyLongTerm,
	title = {Control {Strategy} for {Long}-{Term} {Station}-{Keeping} on {Near}-{Rectilinear} {Halo} {Orbits}},
	doi = {10.2514/6.2020-1459},
	urldate = {2026-08-12},
	booktitle = {{AIAA} {Scitech} 2020 {Forum}},
	author = {Muralidharan, Vivek and Weiss, Avishai and Kalabic, Uros V.},
	year = {2020},
	doi = {10.2514/6.2020-1459},
}

@inproceedings{aletiChanceConstrainedOutputFeedbackControl2023,
	title = {Chance-{Constrained} {Output}-{Feedback} {Control} without {History} {Feedback}: {Application} to {NRHO} {Stationkeeping}},
	shorttitle = {Chance-{Constrained} {Output}-{Feedback} {Control} without {History} {Feedback}},
	booktitle = {{AAS}/{AIAA} {Astrodynamics} {Specialist} {Conference}},
	author = {Aleti, Divija and Oguri, Kenshiro and Kumagai, Naoya},
	month = aug,
	year = {2023},
}

@inproceedings{kumagaiRobustNRHOStationkeeping2023,
	title = {Robust {NRHO} {Station}-keeping {Planning} with {Maneuver} {Location} {Optimization} under {Operational} {Uncertainties}},
	booktitle = {{AAS}/{AIAA} {Astrodynamics} {Specialist} {Conference}},
	author = {Kumagai, Naoya and Oguri, Kenshiro},
	month = aug,
	year = {2023},
}

@inproceedings{shrivastavaMASTOMissedthrustAware2026,
	title = {{MASTO}: {Missed}-thrust {Aware} {Stochastic} {Trajectory} {Optimization}},
	shorttitle = {{MASTO}},
	booktitle = {2026 {AAS}/{AIAA} {Astrodynamics} {Specialist} {Conference}},
	author = {Shrivastava, Shaurya and Oguri, Kenshiro},
	month = jul,
	year = {2026},
}

@article{maH2ControlDiscretetime2013,
	title = {H2 control of discrete-time periodic systems with {Markovian} jumps and multiplicative noise},
	volume = {86},
	issn = {0020-7179},
	doi = {10.1080/00207179.2013.797108},
	number = {10},
	urldate = {2024-09-09},
	journal = {Int. J. of Control},
	author = {Ma, Hongji and Jia, Yingmin},
	month = oct,
	year = {2013},
	pages = {1837--1849},
}

@article{sinhaInitialGuessGeneration2025a,
	title = {Initial {Guess} {Generation} for {Low}-{Thrust} {Trajectory} {Design} with {Robustness} to {Missed}-{Thrust} {Events}},
	volume = {48},
	issn = {0731-5090},
	doi = {10.2514/1.G009050},
	number = {12},
	urldate = {2026-03-28},
	journal = {Journal of Guidance, Control, and Dynamics},
	publisher = {American Institute of Aeronautics and Astronautics},
	author = {Sinha, Amlan and Beeson, Ryne},
	year = {2025},
	pages = {2831--2848},
}

@article{rubinsztejnDesigningTrajectoriesResilient2021,
	title = {Designing trajectories resilient to missed thrust events using expected thrust fraction},
	volume = {115},
	issn = {1270-9638},
	doi = {10.1016/j.ast.2021.106780},
	language = {en},
	urldate = {2023-04-07},
	journal = {Aerospace Science and Technology},
	author = {Rubinsztejn, Ari and Sandel, Carrie Grace and Sood, Rohan and Laipert, Frank E.},
	month = aug,
	year = {2021},
	pages = {106780},
}

@article{lafargeAdaptiveClosedloopManeuver2023,
	title = {Adaptive closed-loop maneuver planning for low-thrust spacecraft using reinforcement learning},
	volume = {211},
	issn = {0094-5765},
	doi = {10.1016/j.actaastro.2023.06.004},
	urldate = {2024-09-18},
	journal = {Acta Astronautica},
	author = {LaFarge, Nicholas B. and Howell, Kathleen C. and Folta, David C.},
	month = oct,
	year = {2023},
	pages = {142--154},
}

@article{mackenzieEnceladusOrbilanderMission2021,
	title = {The {Enceladus} {Orbilander} {Mission} {Concept}: {Balancing} {Return} and {Resources} in the {Search} for {Life}},
	volume = {2},
	issn = {2632-3338},
	shorttitle = {The {Enceladus} {Orbilander} {Mission} {Concept}},
	doi = {10.3847/PSJ/abe4da},
	language = {en},
	number = {2},
	urldate = {2024-12-19},
	journal = {The Planetary Science Journal},
	publisher = {IOP Publishing},
	author = {MacKenzie, Shannon M. and Neveu, Marc and Davila, Alfonso F. and Lunine, Jonathan I. and Craft, Kathleen L. and Cable, Morgan L. and Phillips-Lander, Charity M. and Hofgartner, Jason D. and Eigenbrode, Jennifer L. and Waite, J. Hunter and Glein, Christopher R. and Gold, Robert and Greenauer, Peter J. and Kirby, Karen and Bradburne, Christopher and Kounaves, Samuel P. and Malaska, Michael J. and Postberg, Frank and Patterson, G. Wesley and Porco, Carolyn and Núñez, Jorge I. and German, Chris and Huber, Julie A. and McKay, Christopher P. and Vera, Jean-Pierre de and Brucato, John Robert and Spilker, Linda J.},
	month = apr,
	year = {2021},
	pages = {77},
}

@article{sidhoumLowthrustTrajectoryDesign2024a,
	title = {Low-thrust trajectory design for icy moons orbiters using multi-body techniques},
	volume = {136},
	issn = {1572-9478},
	doi = {10.1007/s10569-024-10228-w},
	language = {en},
	number = {6},
	urldate = {2026-08-13},
	journal = {Celestial Mechanics and Dynamical Astronomy},
	author = {Sidhoum, Yanis and Oguri, Kenshiro},
	month = nov,
	year = {2024},
	pages = {55},
}

@inproceedings{oguriChanceConstrainedControlSafe2024,
	title = {Chance-{Constrained} {Control} for {Safe} {Spacecraft} {Autonomy}: {Convex} {Programming} {Approach}},
	issn = {2378-5861},
	shorttitle = {Chance-{Constrained} {Control} for {Safe} {Spacecraft} {Autonomy}},
	doi = {10.23919/ACC60939.2024.10645008},
	urldate = {2025-01-08},
	booktitle = {2024 {Amer}. {Control} {Conf}. ({ACC})},
	author = {Oguri, Kenshiro},
	month = jul,
	year = {2024},
	pages = {2318--2324},
}

@article{laipertAutomatedMissedThrustPropellant2015,
	title = {Automated {Missed}-{Thrust} {Propellant} {Margin} {Analysis} for {Low}-{Thrust} {Trajectories}},
	volume = {52},
	issn = {0022-4650},
	doi = {10.2514/1.A33264},
	number = {4},
	urldate = {2024-09-23},
	journal = {Journal of Spacecraft and Rockets},
	publisher = {American Institute of Aeronautics and Astronautics},
	author = {Laipert, Frank E. and Longuski, James M.},
	year = {2015},
	pages = {1135--1143},
}

@article{nemirovskyTrajectoryDesignControl2026,
	title = {Trajectory {Design} and {Control} for {Missed}-{Thrust} {Rendezvous}},
	volume = {49},
	issn = {0731-5090},
	doi = {10.2514/1.G008831},
	number = {4},
	urldate = {2026-08-31},
	journal = {Journal of Guidance, Control, and Dynamics},
	publisher = {American Institute of Aeronautics and Astronautics},
	author = {Nemirovsky, Meir and Gurfil, Pini},
	month = apr,
	year = {2026},
	pages = {990--1004},
}

@article{gatesSimplifiedModelMidcourse,
	address = {Jet Propulsion Lab, Pasadena, CA},
	title = {A {Simplified} {Model} of {Midcourse} {Maneuver} {Execution} {Errors}},
	url = {https://ntrs.nasa.gov/citations/19640003365},
	language = {en},
	journal = {Contractor Report JPL-TR-32-504},
	author = {Gates, C R},
	year = {1963},
}

@article{kumagaiRobustCislunarLowThrust,
	title = {Robust {Cislunar} {Low}-{Thrust} {Trajectory} {Optimization} {Under} {Uncertainties} via {Sequential} {Covariance} {Steering}},
	volume = {48},
	issn = {0731-5090},
	doi = {10.2514/1.G009092},
	number = {12},
	urldate = {2025-09-24},
	journal = {Journal of Guidance, Control, and Dynamics},
	author = {Kumagai, Naoya and Oguri, Kenshiro},
	month = dec,
	year = {2025},
	pages = {1--19},
}

@article{qiConvexBoundNonlinear,
	title = {Convex {Bound} of {Nonlinear} {Dynamical} {Errors} for {Covariance} {Steering}},
	volume = {49},
	issn = {0731-5090},
	doi = {10.2514/1.G009692},
	number = {10},
	urldate = {2026-10-07},
	journal = {Journal of Guidance, Control, and Dynamics},
	publisher = {American Institute of Aeronautics and Astronautics},
	author = {Qi, Daniel C. and Oguri, Kenshiro},
	month = oct,
	year = {2026},
	pages = {3014--3024},
}

@inproceedings{imkenModelingSpacecraftSafe2018,
	title = {Modeling spacecraft safe mode events},
	doi = {10.1109/AERO.2018.8396383},
	urldate = {2024-04-26},
	booktitle = {2018 {IEEE} {Aerospace} {Conference}},
	author = {Imken, Travis and Randolph, Thomas and DiNicola, Michael and Nicholas, Austin},
	month = mar,
	year = {2018},
	pages = {1--13},
}

@article{desouzaRobustStabilityStabilization2006,
	title = {Robust stability and stabilization of uncertain discrete-time {Markovian} jump linear systems},
	volume = {51},
	issn = {1558-2523},
	doi = {10.1109/TAC.2006.875012},
	number = {5},
	urldate = {2024-06-05},
	journal = {IEEE Transactions on Automatic Control},
	author = {de Souza, C.E.},
	month = may,
	year = {2006},
	pages = {836--841},
}

@article{barbieriOptimalControlConstrained2018,
	title = {Optimal control with constrained total variance for {Markov} jump linear systems with multiplicative noises},
	volume = {49},
	issn = {0020-7721},
	doi = {10.1080/00207721.2018.1441469},
	number = {6},
	urldate = {2024-06-12},
	journal = {International Journal of Systems Science},
	publisher = {Taylor \& Francis},
	author = {Barbieri, Fabio and Costa, Oswaldo L. V.},
	month = apr,
	year = {2018},
	pages = {1178--1187},
}

@misc{wangCovarianceSteeringDiscreteTime2026,
	title = {Covariance {Steering} of {Discrete}-{Time} {Markov} {Jump} {Linear} {Systems} with {Multiplicative} {Noise}},
	doi = {10.48550/arXiv.2604.19994},
	urldate = {2026-05-11},
	publisher = {arXiv},
	author = {Wang, Fangji and Ganguly, Siddhartha and Tsiotras, Panagiotis},
	month = apr,
	year = {2026},
	note = {arXiv:2604.19994 [math.OC]},
}

@book{bittantiPeriodicSystemsFiltering2008,
	edition = {1st},
	title = {Periodic {Systems}: {Filtering} and {Control}},
	isbn = {978-1-84800-910-3},
	shorttitle = {Periodic {Systems}},
	publisher = {Springer Publishing Company Inc.},
	author = {Bittanti, Sergio and Colaneri, Patrizio},
	month = oct,
	year = {2008},
}

@inproceedings{maStochasticH2Optimal2012,
	title = {Stochastic {H2} optimal control of discrete-time {Markov} jump systems with periodic coefficients},
	issn = {2378-5861},
	doi = {10.1109/ACC.2012.6315462},
	urldate = {2024-06-06},
	booktitle = {2012 {American} {Control} {Conference} ({ACC})},
	author = {Ma, Hongji and Jia, Yingmin and Du, Junping and Yu, Fashan},
	month = jun,
	year = {2012},
	pages = {1640--1645},
}

@book{boydLinearMatrixInequalities1994,
	series = {Soc. for {Ind}. and {Appl}. {Math}.},
	title = {Linear {Matrix} {Inequalities} in {System} and {Control} {Theory}},
	isbn = {978-0-89871-485-2},
	doi = {10.1137/1.9781611970777},
	urldate = {2024-09-16},
	publisher = {Soc. for Ind. and Appl. Math.},
	author = {Boyd, Stephen and El Ghaoui, Laurent and Feron, Eric and Balakrishnan, Venkataramanan},
	month = jan,
	year = {1994},
}

@inproceedings{shrivastavaMarkovJumpApproachRobust2025,
	title = {Markov-{Jump} {Approach} to {Robust} {TCM} {Policy} {Design} under {Statistical} {Missed}-{Thrust} {Events}: {Application} to {NRHO} {Stationkeeping}},
	shorttitle = {{MARKOV}-{JUMP} {APPROACH} {TO} {ROBUST} {TCM} {POLICY} {DESIGN} {UNDER} {STATISTICAL} {MISSED}-{THRUST} {EVENTS}},
	urldate = {2025-03-13},
	booktitle = {{AAS} {Space} {Flight} {Mechanics} {Meeting}},
	author = {Shrivastava, Shaurya and Oguri, Kenshiro},
	year = {2025},
}

@article{liRobustSimultaneousFault2019,
	title = {Robust {Simultaneous} {Fault} {Estimation} and {Nonfragile} {Output} {Feedback} {Fault}-{Tolerant} {Control} for {Markovian} {Jump} {Systems}},
	volume = {49},
	issn = {2168-2232},
	doi = {10.1109/TSMC.2018.2828123},
	number = {9},
	urldate = {2024-08-24},
	journal = {IEEE Trans. on Syst., Man, and Cybern.: Syst.},
	author = {Li, Xiaohang and Ahn, Choon Ki and Lu, Dunke and Guo, Shenghui},
	month = sep,
	year = {2019},
	pages = {1769--1776},
}

@article{costaStabilityResultsDiscreteTime1993,
	title = {Stability {Results} for {Discrete}-{Time} {Linear} {Systems} with {Markovian} {Jumping} {Parameters}},
	volume = {179},
	issn = {0022-247X},
	language = {en},
	number = {1},
	urldate = {2023-04-22},
	journal = {J. of Math. Anal. and Appl.},
	author = {Costa, O. L. V. and Fragoso, M. D.},
	month = oct,
	year = {1993},
	pages = {154--178},
}

@article{dovalRecedingHorizonControl1999,
	title = {Receding horizon control of jump linear systems and a macroeconomic policy problem},
	volume = {23},
	issn = {0165-1889},
	number = {8},
	urldate = {2024-08-24},
	journal = {J. of Econ. Dyn. and Control},
	author = {do Val, João B. R. and Başar, Tamer},
	month = aug,
	year = {1999},
	pages = {1099--1131},
}

@article{jiControllabilityObservabilityDiscretetime1988,
	title = {Controllability, observability and discrete-time markovian jump linear quadratic control},
	volume = {48},
	issn = {0020-7179},
	number = {2},
	urldate = {2024-01-25},
	journal = {International Journal of Control},
	publisher = {Taylor \& Francis},
	author = {JI, YUANDONG and CHIZECK, HOWARD J.},
	month = aug,
	year = {1988},
	pages = {481--498},
}

@article{costaConstrainedQuadraticState1999,
	title = {Constrained quadratic state feedback control of discrete-time {Markovian} jump linear systems},
	volume = {35},
	issn = {0005-1098},
	url = {https://www.sciencedirect.com/science/article/pii/S0005109898002027},
	language = {en},
	number = {4},
	urldate = {2023-01-24},
	journal = {Automatica},
	author = {Costa, O. L. V. and Assumpção Filho, E. O. and Boukas, E. K. and Marques, R. P.},
	month = apr,
	year = {1999},
	pages = {617--626},
}

@article{elghaouiRobustStatefeedbackStabilization1996,
	title = {Robust state-feedback stabilization of jump linear systems via {LMIs}},
	volume = {6},
	copyright = {Copyright © 1996 John Wiley \& Sons, Ltd.},
	issn = {1099-1239},
	doi = {10.1002/(SICI)1099-1239(199611)6:9/10<1015::AID-RNC266>3.0.CO;2-0},
	language = {en},
	number = {9-10},
	urldate = {2024-02-24},
	journal = {Int. J. of Robust and Nonlinear Control},
	author = {El Ghaoui, Laurent and Rami, Mustapha Ait},
	year = {1996},
	pages = {1015--1022},
}

@inproceedings{ridderhofChanceconstrainedCovarianceControl2020,
	title = {Chance-constrained covariance control for low-thrust minimum-fuel trajectory optimization},
	urldate = {2025-03-13},
	booktitle = {{AAS}/{AIAA} {Astrodynamics} {Specialist} {Conf}.},
	author = {Ridderhof, Jack and Pilipovsky, Joshua and Tsiotras, Panagiotis},
	year = {2020},
	pages = {9--13},
}

@article{oguriRobustSpacecraftGuidance2021,
	title = {Robust {Spacecraft} {Guidance} {Around} {Small} {Bodies} {Under} {Uncertainty}: {Stochastic} {Optimal} {Control} {Approach}},
	volume = {44},
	issn = {0731-5090},
	shorttitle = {Robust {Spacecraft} {Guidance} {Around} {Small} {Bodies} {Under} {Uncertainty}},
	url = {https://arc.aiaa.org/doi/10.2514/1.G005426},
	doi = {10.2514/1.G005426},
	number = {7},
	urldate = {2026-08-31},
	journal = {Journal of Guidance, Control, and Dynamics},
	publisher = {American Institute of Aeronautics and Astronautics},
	author = {Oguri, Kenshiro and McMahon, Jay W.},
	month = jul,
	year = {2021},
	pages = {1295--1313},
}

@inproceedings{sekineRobustTrajectoryOptimization2026,
	title = {Robust {Trajectory} {Optimization} {Against} {Missed} {Thrust} {Events} {Using} {Sequential} {Convex} {Programming}},
	issn = {2996-2358},
	url = {https://ieeexplore.ieee.org/document/11520058/},
	doi = {10.1109/AERO66936.2026.11520058},
	urldate = {2026-08-31},
	booktitle = {2026 {IEEE} {Aerospace} {Conference}},
	author = {Sekine, Hirotaka and Oguri, Kenshiro and Kawabata, Yosuke and Funase, Ryu},
	month = mar,
	year = {2026},
	note = {ISSN: 2996-2358},
	pages = {1--15},
}

@article{howellStationkeepingMethodLibration1993,
	title = {Station-keeping method for libration point trajectories},
	volume = {16},
	issn = {0731-5090},
	url = {https://arc.aiaa.org/doi/10.2514/3.11440},
	doi = {10.2514/3.11440},
	number = {1},
	urldate = {2026-08-31},
	journal = {Journal of Guidance, Control, and Dynamics},
	publisher = {American Institute of Aeronautics and Astronautics},
	author = {Howell, K. C. and Pernicka, H. J.},
	month = jan,
	year = {1993},
	pages = {151--159},
}

@article{oguriEQUULEUSTrajectoryDesign2020,
	title = {{EQUULEUS} {Trajectory} {Design}},
	volume = {67},
	issn = {2195-0571},
	url = {https://doi.org/10.1007/s40295-019-00206-y},
	doi = {10.1007/s40295-019-00206-y},
	language = {en},
	number = {3},
	urldate = {2026-08-31},
	journal = {The Journal of the Astronautical Sciences},
	author = {Oguri, Kenshiro and Oshima, Kenta and Campagnola, Stefano and Kakihara, Kota and Ozaki, Naoya and Baresi, Nicola and Kawakatsu, Yasuhiro and Funase, Ryu},
	month = sep,
	year = {2020},
	pages = {950--976},
}

@article{gardnerCAPSTONESummaryHighly2023,
	title = {{CAPSTONE}: {A} {Summary} of a {Highly} {Successful} {Mission} in the {Cislunar} {Environment}},
	shorttitle = {{CAPSTONE}},
	url = {https://digitalcommons.usu.edu/smallsat/2023/all2023/69},
	journal = {Small Satellite Conference},
	author = {Gardner, Tom and Cheetham, Brad and Parker, Jeff and Forsman, Alec and Kayser, Ethan and Thompson, Michael and Ott, Conner and DeMoudt, Lauren and Caudill, Michael and Bolliger, Matt and Kam, Arlen and Thompson, Keith and Rogers, Rebecca and Umansky, Hannah and Bryant, Brennan and Svitek, Tomas},
	month = aug,
	year = {2023},
}

@article{booneHaloOrbitConfigurations2026,
	title = {Halo orbit configurations in the {Saturn}-{Enceladus} system for an {Enceladus} orbiter},
	volume = {77},
	issn = {0273-1177},
	url = {https://www.sciencedirect.com/science/article/pii/S0273117726002140},
	doi = {10.1016/j.asr.2026.02.033},
	number = {9},
	urldate = {2026-08-13},
	journal = {Advances in Space Research},
	author = {Boone, Spencer and Sánchez, Joan Pau and Lizy-Destrez, Stéphanie},
	month = may,
	year = {2026},
	pages = {9230--9246},
}

@article{shimaneRevolutionSpacedOutputFeedbackModel2026,
	title = {Revolution-{Spaced} {Output}-{Feedback} {Model} {Predictive} {Control} for {Station} {Keeping} on {Near}-{Rectilinear} {Halo} {Orbits}},
	volume = {34},
	issn = {1558-0865},
	url = {https://ieeexplore.ieee.org/document/11204374/},
	doi = {10.1109/TCST.2025.3614324},
	number = {1},
	urldate = {2026-08-12},
	journal = {IEEE Transactions on Control Systems Technology},
	author = {Shimane, Yuri and Di Cairano, Stefano and Ho, Koki and Weiss, Avishai},
	month = jan,
	year = {2026},
	pages = {538--544},
}

@article{foltaEarthMoonLibration2014,
	title = {Earth–{Moon} libration point orbit stationkeeping: {Theory}, modeling, and operations},
	volume = {94},
	issn = {0094-5765},
	shorttitle = {Earth–{Moon} libration point orbit stationkeeping},
	url = {https://www.sciencedirect.com/science/article/pii/S0094576513000349},
	doi = {10.1016/j.actaastro.2013.01.022},
	number = {1},
	urldate = {2026-08-12},
	journal = {Acta Astronautica},
	author = {Folta, David C. and Pavlak, Thomas A. and Haapala, Amanda F. and Howell, Kathleen C. and Woodard, Mark A.},
	month = jan,
	year = {2014},
	pages = {421--433},
}

@article{gomezStationKeepingStrategiesTranslunar1998,
	title = {Station-{Keeping} {Strategies} {For} {Translunar} {Libration} {Point} {Orbits}},
	volume = {99},
	journal = {Advances in the Astronautical Sciences},
	author = {Gomez, Gerard and Howell, Kathie and Masdemont, Josep and Simo, Carles},
	month = jan,
	year = {1998},
}

@article{nathersonReachabililtybasedMissedThrust2026,
	title = {Reachabililty-based missed thrust analysis},
	volume = {247},
	issn = {00945765},
	url = {https://linkinghub.elsevier.com/retrieve/pii/S0094576526003413},
	doi = {10.1016/j.actaastro.2026.05.030},
	language = {en},
	urldate = {2026-07-03},
	journal = {Acta Astronautica},
	author = {Natherson, Robyn A. and Scheeres, Daniel J.},
	month = oct,
	year = {2026},
	pages = {565--578},
}

@inproceedings{oguriConvexBlockCholeskyApproach2026,
	title = {Convex {Block}-{Cholesky} {Approach} to {Risk}-{Constrained} {Low}-thrust {Trajectory} {Design} under {Operational} {Uncertainty}},
	url = {https://arxiv.org/abs/2602.18416v1},
	language = {en},
	urldate = {2026-03-29},
	booktitle = {{arXiv}.org},
	author = {Oguri, Kenshiro and Lantoine, Gregory},
	month = feb,
	year = {2026},
}

@book{durrettProbabilityTheoryExamples2019,
	title = {Probability: {Theory} and {Examples}},
	isbn = {978-1-108-59263-5},
	shorttitle = {Probability},
	language = {en},
	publisher = {Cambridge University Press},
	author = {Durrett, Rick},
	month = apr,
	year = {2019},
	note = {Google-Books-ID: vESPDwAAQBAJ},
}

@book{costaDiscreteTimeMarkovJump2005,
	title = {Discrete-{Time} {Markov} {Jump} {Linear} {Systems}},
	isbn = {978-1-85233-761-2},
	language = {en},
	publisher = {Springer Sci. \& Bus. Media},
	author = {Costa, O. L. V. and Fragoso, M. D. and Marques, R. P.},
	month = feb,
	year = {2005},
}

@incollection{starekSpacecraftAutonomyChallenges2016,
	address = {Berlin, Heidelberg},
	title = {Spacecraft {Autonomy} {Challenges} for {Next}-{Generation} {Space} {Missions}},
	volume = {460},
	isbn = {978-3-662-47693-2 978-3-662-47694-9},
	language = {en},
	urldate = {2025-01-08},
	booktitle = {Advances in {Control} {System} {Technology} for {Aerospace} {Applications}},
	publisher = {Springer Berlin Heidelberg},
	author = {Starek, Joseph A. and Açıkmeşe, Behçet and Nesnas, Issa A. and Pavone, Marco},
	editor = {Feron, Eric},
	year = {2016},
	pages = {1--48},
}

@article{spreenTrajectoryDesignTargeting2021,
	title = {Trajectory {Design} and {Targeting} for {Applications} to the {Exploration} {Program} in {Cislunar} {Space}},
	url = {https://docs.lib.purdue.edu/dissertations/AAI30504506},
	journal = {Theses and Dissertations Available from ProQuest},
	author = {Spreen, Emily M. Zimovan},
	month = jan,
	year = {2021},
	pages = {1--249},
}

@article{connorhowellThreedimensionalPeriodicHalo1984,
	title = {Three-dimensional, periodic, ‘halo’ orbits},
	volume = {32},
	issn = {1572-9478},
	url = {https://doi.org/10.1007/BF01358403},
	doi = {10.1007/BF01358403},
	language = {en},
	number = {1},
	urldate = {2024-12-23},
	journal = {Celestial mechanics},
	author = {Connor Howell, Kathleen},
	month = jan,
	year = {1984},
	pages = {53--71},
}

@inproceedings{davisORBITMAINTENANCEBURN2022,
	title = {{ORBIT} {MAINTENANCE} {BURN} {DETAILS} {FOR} {SPACECRAFT} {IN} {A} {NEAR} {RECTILIENAR} {HALO} {ORBIT}},
	language = {en},
	booktitle = {{AAS}/{AIAA} {Astrodynamics} {Specialists} {Conference}},
	author = {Davis, Diane C and Scheuerle, Stephen T and Williams, Dale A and Miguel, Frederick S and Zimovan-Spreen, Emily M and Howell, Kathleen C},
	year = {2022},
}

@inproceedings{shrivastavaFaultTolerantFeedbackControl2023,
	title = {Fault-{Tolerant} {Feedback} {Control} for {Spacecraft} {Rendezvous} using {Markov} {Jump} {Model}},
	booktitle = {{AAS}/{AIAA} {Astrodynamics} {Specialist} {Conference}},
	author = {Shrivastava, Shaurya and Oguri, Kenshiro},
	month = aug,
	year = {2023},
}

@article{fuModelReferenceTracking2017,
	title = {Model reference tracking control of continuous-time periodic linear systems with actuator jumping fault and its applications in orbit maneuvering},
	volume = {15},
	issn = {2005-4092},
	url = {https://doi.org/10.1007/s12555-016-0104-9},
	doi = {10.1007/s12555-016-0104-9},
	language = {en},
	number = {5},
	urldate = {2024-09-18},
	journal = {International Journal of Control, Automation and Systems},
	author = {Fu, Yan-Ming and Lu, Yang and Zhang, Mao-Rui},
	month = oct,
	year = {2017},
	pages = {2182--2192},
}

@inproceedings{zimovan-spreenRecoveryTraejctoriesInadvertent2021,
	address = {Virtual},
	title = {Recovery {Traejctories} for {Inadvertent} {Departures} from an {NRHO}},
	url = {https://ntrs.nasa.gov/citations/20210000237},
	urldate = {2024-09-18},
	booktitle = {{AAS}/{AIAA} {Spaceflight} {Mechanics} {Meeting}},
	author = {Zimovan-Spreen, Emily M. and Davis, Diane C. and Howell, Kathleen C.},
	year = {2021},
}

@inproceedings{mccartyMissedThrustAnalysis2020,
	title = {Missed {Thrust} {Analysis} and {Design} {For} {Low} {Thrust} {Cislunar} {Transfers}},
	booktitle = {{AAS}/{AIAA} {Astrodynamics} {Specialist} {Conference}},
	author = {McCarty, Steven and Grebow, Daniel},
	month = aug,
	year = {2020},
}

@article{desouzaLMIApproachStabilization2000,
	title = {An {LMI} approach to stabilization of linear discrete-time periodic systems},
	volume = {73},
	issn = {0020-7179},
	url = {https://doi.org/10.1080/002071700403466},
	doi = {10.1080/002071700403466},
	number = {8},
	urldate = {2024-04-26},
	journal = {Int. J. of Control},
	author = {De Souza, Carlos E. and Trofino, Alexandre},
	month = jan,
	year = {2000},
	pages = {696--703},
}

@article{suEventtriggeredSlidingMode2021,
	title = {Event-triggered sliding mode control of networked control systems with {Markovian} jump parameters},
	volume = {125},
	issn = {0005-1098},
	url = {https://www.sciencedirect.com/science/article/pii/S0005109820306075},
	doi = {10.1016/j.automatica.2020.109405},
	urldate = {2024-08-24},
	journal = {Automatica},
	author = {Su, Xiaojie and Wang, Chunlian and Chang, Hongbin and Yang, Yue and Assawinchaichote, Wudhichai},
	month = mar,
	year = {2021},
	pages = {109405},
}

@article{bittantiExtendedPeriodicLyapunov1985,
	title = {The extended periodic lyapunov lemma},
	volume = {21},
	issn = {0005-1098},
	url = {https://www.sciencedirect.com/science/article/pii/0005109885900093},
	doi = {10.1016/0005-1098(85)90009-3},
	number = {5},
	urldate = {2024-07-11},
	journal = {Automatica},
	author = {Bittanti, Sergio and Bolzern, Paolo and Colaneri, Patrizio},
	month = sep,
	year = {1985},
	pages = {603--605},
}

@misc{vegaContingencyAwareStationKeepingControl2024,
	title = {Contingency-{Aware} {Station}-{Keeping} {Control} of {Halo} {Orbits}},
	url = {http://arxiv.org/abs/2405.20458},
	language = {en},
	urldate = {2024-06-28},
	publisher = {arXiv},
	author = {Vega, Fausto and Manchester, Zachary and Lo, Martin and Restrepo, Ricardo},
	month = may,
	year = {2024},
	note = {arXiv:2405.20458 [cs, eess]},
}

@article{athanasopoulosStabilityStabilizationPeriodic2014,
	title = {On stability and stabilization of periodic discrete-time systems with an application to satellite attitude control},
	volume = {50},
	issn = {0005-1098},
	url = {https://www.sciencedirect.com/science/article/pii/S0005109814004099},
	doi = {10.1016/j.automatica.2014.10.028},
	number = {12},
	urldate = {2024-06-12},
	journal = {Automatica},
	author = {Athanasopoulos, Nikolaos and Lazar, Mircea and Böhm, Christoph and Allgöwer, Frank},
	month = dec,
	year = {2014},
	pages = {3190--3196},
}

@article{venigallaMultiObjectiveLowThrustTrajectory2022,
	title = {Multi-{Objective} {Low}-{Thrust} {Trajectory} {Optimization} with {Robustness} to {Missed} {Thrust} {Events}},
	volume = {45},
	issn = {0731-5090},
	url = {https://arc.aiaa.org/doi/10.2514/1.G006056},
	doi = {10.2514/1.G006056},
	number = {7},
	urldate = {2023-04-07},
	journal = {Journal of Guidance, Control, and Dynamics},
	publisher = {American Institute of Aeronautics and Astronautics},
	author = {Venigalla, Chandrakanth and Englander, Jacob A. and Scheeres, Daniel J.},
	month = jul,
	year = {2022},
	pages = {1255--1268},
}

@inproceedings{marsillachFailSafeSpacecraftRendezvous2021,
	title = {Fail-{Safe} {Spacecraft} {Rendezvous} on {Near}-{Rectilinear} {Halo} {Orbits}},
	issn = {2378-5861},
	doi = {10.23919/ACC50511.2021.9483328},
	booktitle = {2021 {American} {Control} {Conference} ({ACC})},
	author = {Marsillach, Daniel Aguilar and Di Cairano, Stefano and Kalabić, Uroš and Weiss, Avishai},
	month = may,
	year = {2021},
	pages = {2980--2985},
}

\end{document}